\newcommand{\N}{\mathbb N}
\newcommand{\R}{\mathbb R}
\newcommand{\rar}{\rightarrow}
\newtheorem{thm}{Theorem}
\numberwithin{thm}{section}
\newtheorem{lemma}[thm]{Lemma}
\newtheorem{Cor}[thm]{Corollary}
\theoremstyle{definition}
\newtheorem{Rem}[thm]{Remark}
\newtheorem{Def}[thm]{Definition}
\newcommand{\exend}{\hfill $\Diamond$}
\begin{document}
\title{Trimmed sums for observables on the doubling map}
\author[Schindler]{Tanja Schindler}
\address{Australian National University, Research School of Finance, Actuarial Studies and Statistics, 26C Kingsley St,
Acton ACT 2601, Australia}
  \email{\href{mailto:tanja.schindler@anu.edu.au}{tanja.schindler@anu.edu.au}}

\keywords{Almost sure convergence theorems, trimmed sum process, piecewise expanding interval maps, doubling map}
 \subjclass[2010]{
    Primary: 60F15
    Secondary: 37A05, 37A25, 37A50, 60G10}
\date{\today}

\begin{abstract}
We establish a strong law of large numbers under intermediate trimming for a particular example of 
Birkhoff sums of a non-integrable observable over the doubling map. 
It has been shown in a previous work by Haynes that there is no strong law of large numbers 
for the considered system after removing finitely many summands (light trimming)
even though i.i.d.\ random variables and also some dynamical systems with the same distribution function
obey a strong law of large numbers after removing only the largest summand.
\end{abstract}
\maketitle

\section{Introduction and statement of results}
Considering $T$ an ergodic and measure-preserving  transformation  of  a  probability space
$\left(\Omega,\mathcal{B},\mu\right)$ and an observable $\varphi\colon \Omega\to\mathbb{R}_{\geq 0}$,
there is a crucial difference
in terms of the strong law of large numbers between $\varphi$ being integrable or not.
In the integrable case we obtain by Birkhoff's ergodic theorem that $\mu$-almost surely (a.s.)  
\begin{align*}
 \lim_{n\to\infty}\frac{\sum_{k=1}^n\varphi\circ T^{k-1}}{n}=\int\varphi\mathrm{d}\mu,
\end{align*}
i.e.\ the strong law of large numbers is fulfilled, whereas 
in the case of an observable with infinite expectation, 
Aaronson showed in \cite{aaronson_ergodic_1977} that for
all positive sequences $\left(d_{n}\right)_{n\in\mathbb{N}}$ we have
$\mu$-a.s.
\[
\limsup_{n\rightarrow\infty}\frac{\sum_{k=1}^n\varphi\circ T^{k-1}}{d_{n}}=+\infty\text{ \,\,\,\,\ or \,\,\,\,\,}\liminf_{n\rightarrow\infty}\frac{\sum_{k=1}^n\varphi\circ T^{k-1}}{d_{n}}=0.
\]

However, in certain cases it is possible to obtain a strong law of large numbers after deleting 
a finite number of maximal terms. 
One of the first investigated examples for this situation is the unique continued
fraction expansion of an irrational $x\in\left[0,1\right)$ given by
\[x\coloneqq\frac{1}{c_{1}\left(x\right)+\cfrac{1}{c_{2}\left(x\right)+\ddots}}.
\]
In this case we consider the probability space $([0,1),\mathcal{B},\mu)$
with $\mu$
the Gauss measure given by $d\mu(x)\coloneqq1/\left(\log2\left(1+x\right)\right) d\lambda\left(x\right)$ with  $\lambda$ denoting the Lebesgue measure restricted to $\left[0,1\right)$, 
together with the Gauss map $G\colon [0,1)\to [0,1)$ defined by
\begin{align*}
 G(x)&\coloneqq \begin{cases}0&\text{ if }x=0\\
 \left\{1/x\right\}&\text{ else,}             
              \end{cases}
\end{align*}
where $\{x\}=x-\lfloor x\rfloor$ and $\left\lfloor x\right\rfloor=\max\left\{n\in\mathbb{Z}\colon n\leq x\right\}$.
The observable 
$\chi\colon[0,1)\to\mathbb{N}\cup\{\infty\}$ with 
\begin{align}
 \chi(x)&\coloneqq\left\lfloor 1/x\right\rfloor\label{eq: def chi}
\end{align}
gives then rise to the  stationary (dependent, but $\bm{\psi}$-mixing) process $\chi\circ  G^{n-1}=c_{n}$, $n\in\mathbb{N}$, of the $n$-th continued fraction digit (with this notation including the case of the finite continued fraction expansion of $x\in [0,1)\cap\mathbb{Q}$,
for which we set $1/0\coloneqq\infty$). 
For this example Diamond and Vaaler showed in \cite{diamond_estimates_1986}
that we have $\mu$-a.s.\
\begin{equation}
 \lim_{n\rightarrow\infty}\frac{\sum_{k=1}^n c_k-\max_{1\leq \ell\leq n}c_{\ell}}{n\log n}=\frac{1}{\log2}.\label{eq: DV CF strong law}
\end{equation}

But what happens if we use another transformation $\tau$ instead of the Gauss map $G$?
In this paper we are interested in the doubling map $\tau$ defined as 
$\tau\colon\left[0,1\right) \to\left[0,1\right)$ with 
\begin{align}
 \tau(x)\coloneqq 2x\mod 1.\label{eq: def tau}
\end{align}
It is clear that $\lambda$, the Lebesgue measure restricted to $[0,1)$, is an invariant measure with respect to $\tau$.
Our main interest throughout the paper lies on the dynamical system $\left([0,1), \mathcal{B},\lambda,\tau\right)$
together with the observable $\chi$ given in \eqref{eq: def chi}.
For all $n\in\N$ we set 
\begin{equation}
 a_n\coloneqq\chi\circ\tau^{n-1}\text{\,\,\,\,\, and \,\,\,\,\,}S_n\coloneqq\sum_{k=1}^n a_k.\label{eq: def an Sn}
\end{equation}

Haynes showed in \cite{haynes_quantitative_2014} that the digits $(a_n)$ show a behavior different to the 
continued fraction digits $(c_n)$ in terms of strong laws of large numbers.
To make this more precise we first define our trimmed sums. 
For each $n\in\mathbb{N}$ and $x\in[0,1)$ let $\pi\in\mathcal{S}_{n}$ be a permutation of $\left\{ 1,\ldots,n\right\}$ such that
\[a_{\pi\left(1\right)}(x)\geq a_{\pi\left(2\right)}(x)\geq\ldots\geq a_{\pi\left(n\right)}(x).\]
It is clear that this choice of $\pi$ depends on $n$ and $x$, but for notational convenience in what follows we will suppress this dependence. For any $b\in\N$ we now define
\begin{align}
S_n^{b} & =\sum_{k=b+1}^{n}a_{\pi\left(k\right)}.\label{eq: def Snb}
\end{align}
If $b$ does not depend on $n$, then $S_n^b$ is called a \emph{lightly trimmed sum} and if there exists a sequence of constants $(d_n)$ such that
$\lim_{n\to\infty}S_n^b/d_n=1$ a.s.\ we denote this behavior as a \emph{lightly trimmed strong law}.
From here on we always denote by a.s.\ the almost sure convergence with respect to $\lambda$.

It can be easily concluded from \cite[Theorem 4]{haynes_quantitative_2014} and its proof 
that for any sequence of constants $(d_n)$ and any constant $b\in \mathbb{N}$ we have a.s.\ that
\begin{align*}
\limsup_{n\rightarrow\infty}\frac{S_n^b}{d_{n}}=+\infty\text{ \,\,\,\ or \,\,\,\,}\liminf_{n\rightarrow\infty}\frac{S_n}{d_{n}}=0,
\end{align*}
implying that no lightly trimmed strong law can hold.

The main difference between the continued fraction expansion and the above example is that $\chi$
obeys the structure of the underlying dynamics $G$, but not of $\tau$, 
i.e.\ $\chi$ is constant on each slope of the continued fraction transformation 
while $\tau$ has only one slope on $[0,1/2)$ on which $\chi$ takes different values. 
This results into $(c_n)_{n\in\N}$ having stronger mixing properties, i.e.\ being exponentially $\bm{\psi}$-mixing, 
see \cite{philipp_limit_1988}.
However, the digits $(a_n)_{n\in\N}$ are still $\bm{\alpha}$-mixing, see Section \ref{subsec: mixin prop}. 

We shall note here that this example does not seem exceptional. 
In \cite{aaronson_trimmed_2003} Aaronson gave  general conditions for a lightly trimmed strong law of exponentially $\bm{\psi}$-mixing random variables, 
emended by an example for a mixing dynamical system 
for which a lightly trimmed strong law does not hold 
even though it would hold for i.i.d.\ random variables having the same distribution function.

As there can not be a lightly trimmed strong law
for the dynamical system $\left([0,1), \mathcal{B},\lambda,\tau\right)$
with the observable $\chi$, the next step is to ask if there can be a strong law of large numbers if 
the number of deleted terms depends on $n$.
It can be concluded from \cite[Corollary 1.5]{kessebohmer_strong_2017} that there has to be a sequence 
of natural numbers $(b_n)$ tending to infinity such that $b_n=o(n)$ and a norming sequence $(d_n)$ of positive reals 
such that $\lim_{n\to\infty}S_n^{b_n}/d_n=1$ a.s.
We denote this behavior as an \emph{intermediately trimmed strong law}.
However, this qualitative result does not say anything about a minimal trimming sequence $(b_n)$.

It is the aim of this paper to give precise conditions on the growth of the trimming sequence $(b_n)$ and to give a corresponding norming sequence $(d_n)$ 
such that $S_n^{b_n}/d_n$ fulfills an intermediately trimmed strong law. 

The studied example can be seen as a toy example, 
a very similar example has also been studied in \cite{Gouezel_stable_2008}
proving a stable limit law for the system 
$\left([0,1), \mathcal{B},\lambda,\tau\right)$
with the observable
$\widetilde{\chi}_{\alpha}\colon [0,1)\to \mathbb{R}_{>1}\cup\{\infty\}$, $\alpha\geq 1/2$, with 
$\widetilde{\chi}_{\alpha}(x)=1/x^{\alpha}$ instead of $\chi$. 
As Remark \ref{rem: behav tilde chi} will show the behavior remains the same no matter if we consider $\widetilde{\chi}_1$ or $\chi$.

The results can also be seen as a gap to close in the example of the system $\left([0,1), \mathcal{B},\lambda,\tau\right)$.
If we consider the observable
$\widetilde{\chi}_{\alpha}$ with $\alpha>1$, then $\widetilde{\chi}_{\alpha}$ is integrable 
and we can apply Birkhoff's ergodic theorem. 
If $\alpha<1$, then the optimal trimming sequence $(b_n)$ and the corresponding norming sequence $(d_n)$
for an intermediately trimmed strong law
can be calculated using \cite[Theorem 1.7]{kessebohmer_strong_2017} and coincide with those in the i.i.d.\ case, see \cite[Remark 1.9]{kessebohmer_strong_2017}.
The here considered case closes the gap for $\alpha=1$ and is exceptional as it is the only case which differs significantly from the i.i.d.\ case.

It is also worth mentioning that strong laws of large numbers under trimming are a widely studied topic 
for i.i.d.\ random variables and many limit theorems have already been established in the 80th and 90th. 
Most of the above mentioned limit theorems have predecessors as i.i.d.\ versions, 
for instance 
Mori gave conditions for a lightly trimmed strong law of large numbers in \cite{mori_strong_1976} and \cite{mori_stability_1977}.
Haeusler and Mason and subsequently Haeusler  
developed laws of the iterated logarithm for trimmed sums with regularly varying tail distributions, see \cite{haeusler_laws_1987} and \cite{haeusler_nonstandard_1993}.
From these results it is possible to establish an intermediately trimmed strong law.
An intermediately trimmed strong law for more general distribution functions was also subject in \cite{haeuler_asymptotic_1991} and \cite{kessebohmer_strong_2016}.
However, as can be seen from the above explanation, the behavior in this example differs fundamentally from
the i.i.d.\ case and the methods therefore cannot be transfered immediately.

We will now state our main results and then outline the structure of the paper.

\subsection{Statement of main results}\label{subsec: main results}
%
%
In order to more efficiently state our main theorems, we define two collections of positive real valued functions on the natural numbers,
\begin{align*}
\Psi&\coloneqq\left\{ u:\mathbb{N}\rightarrow\mathbb{R}_{> 0}\colon\sum_{n=1}^{\infty}\frac{1}{u\left(n\right)}<\infty\right\},\qquad\text{and}
\qquad\overline{\Psi}\coloneqq\left\{u:\mathbb{N}\to\mathbb{R}_{> 0}\colon \sum_{n=1}^{\infty}\frac{1}{u\left(n\right)}=\infty\right\}.
\end{align*}
Further, remember our setting of the dynamical system $([0,1),\mathcal{B},\lambda,\tau)$ and the observable $\chi$ 
with the subsequent definitions given in \eqref{eq: def tau}, \eqref{eq: def an Sn}, and \eqref{eq: def Snb}.

Our first result is a positive result which demonstrates that, by only intermediately trimming the sums $S_n$, we can cause the remaining quantities to converge a.s.
\begin{thm}\label{thm: if part}
Suppose that $\psi\in\Psi$ and that, for each $n\in\N$,
\begin{align}
b_n=\left\lfloor\frac{\log\psi\left(\left\lfloor\log n\right\rfloor\right)-\log\log n}{\log 2}\right\rfloor \label{eq: cond bn}.
\end{align}
If
\begin{align}
 \lim_{n\to\infty}\frac{b_n}{\left(\log n\right)^{1/4}}=0,\label{eq: bn o log n}
\end{align}
then we have that
\begin{align}
\lim_{n\to\infty}\frac{S_n^{b_n}}{n\cdot\log n}=1~\text{ a.s.}\label{eq: if lim}
\end{align}
\end{thm}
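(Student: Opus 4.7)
The plan is to approximate the rank-based trimmed sum $S_n^{b_n}$ by a threshold-based truncation and to analyse each piece using the mixing properties of the doubling map on observables of bounded variation. I would first choose a deterministic truncation level $M_n$ with $\lambda\{\chi>M_n\}\approx b_n/n$, so that $M_n$ has order $n/b_n$, and write
\[
S_n^{b_n}=T_n+\Delta_n,\qquad T_n:=\sum_{k=1}^n a_k\mathbf{1}_{\{a_k\le M_n\}},
\]
where $\Delta_n$ records the discrepancy between removing the top $b_n$ ranked summands and removing all summands exceeding $M_n$. From $\lambda\{\chi=j\}=1/(j(j+1))$ one gets $E[T_n]=n\log M_n+O(n)$, and since \eqref{eq: bn o log n} forces $\log b_n=o(\log n)$ we have $\log M_n=(1+o(1))\log n$, so that $T_n$ already carries the desired normalisation $n\log n$.

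To upgrade this to a.s.\ convergence for $T_n$, I would exploit the exponential decay of correlations of $\tau$ on $BV$: the centred observable $f_n:=(\chi\wedge M_n)-E[\chi\wedge M_n]$ satisfies $\|f_n\|_{BV}=O(M_n)$ and $E[f_n^2]=O(M_n)$, and a Rosenthal-type moment inequality then gives $E[(T_n-E[T_n])^p]\le C_p\bigl((nM_n)^{p/2}+nM_n^{p-1}\bigr)$ for even $p$. Applied along the geometric subsequence $n_k=2^k$ with $p$ sufficiently large, Markov's inequality produces summable bounds on $\{|T_{n_k}-E[T_{n_k}]|>\epsilon\, n_k\log n_k\}$, so Borel--Cantelli yields $T_{n_k}/(n_k\log n_k)\to 1$ almost surely. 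Monotonicity of $m\mapsto\sum_{k\le m}a_k\mathbf{1}_{\{a_k\le M\}}$ for fixed $M$ then sandwiches intermediate $n$ between two consecutive subsequence terms, filling in the remaining indices.

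Turning to $\Delta_n$, set $N_n:=|\{k\le n:a_k>M_n\}|$; a case analysis on whether $N_n$ exceeds $b_n$ shows that $|\Delta_n|$ is controlled by $|N_n-b_n|$ times the order statistic $a_{\pi(b_n)}\asymp M_n$, plus a small cross contribution from summands straddling the threshold. It therefore suffices to prove $N_n=(1+o(1))b_n$ almost surely together with a bound on the top order statistics. This is where the precise form \eqref{eq: cond bn} and the summability hypothesis $\psi\in\Psi$ enter: translating $\{a_k>M_n\}$ into ``long zero-run in the binary expansion of $x$ starting at position $k$'', the series $\sum 1/\psi(k)<\infty$ powers a Borel--Cantelli argument that rules out anomalously many long zero-runs up to position $n$. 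The expression \eqref{eq: cond bn} is calibrated so that removing $b_n$ summands removes exactly the zero-runs of length exceeding $L_n=\lfloor\log_2(n/b_n)\rfloor$ inside the first $n$ binary digits, up to mixing-induced fluctuations of logarithmic order.

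I expect the main obstacle to be the almost-sure estimate $N_n=(1+o(1))b_n$ under the critical growth constraint \eqref{eq: bn o log n}. A naive second-moment argument based on $\bm{\alpha}$-mixing is insufficient, since zero-runs starting at overlapping positions are strongly dependent at short range. The argument has to separate the relevant scale $L_n\asymp\log(n/b_n)$ from intra-block digit dependence, presumably via a blocking scheme of length $L_n$ combined with a deterministic combinatorial bound on the number of overlapping long runs. This is the step that genuinely uses the binary-shift structure of the doubling map and explains why the example sits outside the classical i.i.d.\ trimmed-sum framework.
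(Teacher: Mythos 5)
There is a genuine gap, and it sits exactly where you predict the main obstacle to be: the target estimate $N_n=(1+o(1))b_n$ a.s.\ is both unprovable and the wrong thing to aim for. Here $b_n\asymp\log\log\log n$ grows far too slowly for the exceedance count of a level $M_n$ with $n\lambda\{\chi>M_n\}\approx b_n$ to concentrate almost surely: even in the idealized model where the relevant block variables are independent (which the induced map $\tau_B$ makes exact), one has $\lambda(N_n\geq K b_n)\gtrsim \exp(-cK\log K\cdot b_n)=(\log\log n)^{-cK\log K}$, which is not summable along any geometric subsequence, so $\limsup_n N_n/b_n=\infty$ a.s.\ by the second Borel--Cantelli lemma. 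Consequently your error term $\Delta_n$ is not controlled by $|N_n-b_n|\cdot a_{\pi(b_n)}$ with $a_{\pi(b_n)}\asymp M_n\asymp n/b_n$; the entries retained by $S_n^{b_n}$ but discarded by your $T_n$ can individually be as large as $\epsilon\, n\log n\gg n/b_n$, and their total contribution is genuinely of order $\epsilon\, n\log n$ (not $o(n\log n)$), so it must be handled by a separate band estimate and a final $\epsilon\downarrow 0$. A symptom of the miscalibration is that your plan uses hypothesis \eqref{eq: bn o log n} only to get $\log M_n\sim\log n$, for which $b_n=o(\log n)$ would suffice; the exponent $1/4$ plays no role in your argument, whereas it is essential in the correct one.

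The mechanism the paper uses is different in kind. Because $a_{i+j}=\lfloor a_i/2^j\rfloor$ deterministically for $j\leq\lfloor\log_2 a_i\rfloor$ (Lemma \ref{lem: ai+k}), all exceedances of a high level come from a single geometric run emanating from at most one large induced-map digit $\beta_i$ (Lemma \ref{lem: Xi> 2x io 0}), and that digit is a.s.\ eventually below $n\psi(\lfloor\log n\rfloor)$ by Borel--Cantelli using $\sum 1/\psi<\infty$ (Lemma \ref{lem: Xi> io 0}). Hence the number of $a_k>\epsilon n\log n$ is a.s.\ eventually at most $\log_2\bigl(\psi(\lfloor\log n\rfloor)/\log n\bigr)\asymp b_n$ --- so the trimming removes \emph{everything} above $\epsilon n\log n$, with no concentration statement about $N_n$ needed (Lemma \ref{lem: number ak> betak io}). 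The band $[\,n(\log n)^{3/4},\,\epsilon n\log n\,]$ is a single geometric run summing to at most $3\epsilon n\log n$ (Lemma \ref{lem: sum ak> io}); the truncated sum at level $t_n=n(\log n)^{3/4}$ obeys an a.s.\ LLN proved via the exact independence of the Lüroth-type induced map together with Bernstein's inequality, rather than a Rosenthal inequality for $\bm{\alpha}$-mixing sequences (which would itself require justification for observables with $\|f_n\|_\infty\asymp M_n$); and the over-trimming loss is at most $b_n\cdot t_n=b_n\, n(\log n)^{3/4}$, which is $o(n\log n)$ precisely because of \eqref{eq: bn o log n}. Your instinct about the binary zero-run structure is the right one, but the quantitative plan built on the i.i.d.-style calibration $M_n\approx n/b_n$ cannot be completed.
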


As an example application of the above theorem, let $\epsilon>0$ and
$\psi\left(n\right)=n\cdot\left(\log n\right)^{1+\epsilon}$.
Then it is not difficult to show that $\psi\in\Psi$ and that the sequence $(b_n)$ defined by \eqref{eq: cond bn} satisfies the estimate
\[b_n=\frac{(1+\epsilon)}{\log 2}\cdot\log\log\log n+o\left(1\right).\]
Since $\epsilon$ is arbitrary, we may conclude using the theorem that, for almost every $x$, if we exclude the largest
\[\left\lfloor\frac{(1+\epsilon)}{\log 2}\cdot\log\log\log n\right\rfloor\]
terms from the sums $S_n$, the remaining quantities will be asymptotic to $n\log n$, as $n$ tends to infinity. We will see from the next result that this is close to best possible.
\begin{thm}\label{thm: only if part}
Suppose that $\psi\in\overline{\Psi}$  and that, for each $n\in\N$,
\begin{align}
b_n=\left\lfloor\frac{\log\psi\left(\left\lfloor\log n\right\rfloor\right)-\log\log n}{\log 2}\right\rfloor.\label{eq: cond bn 1}
\end{align}
Then for almost every $x$ we have that
\begin{align}
\limsup_{n\to\infty}\frac{S_n^{b_n}}{n\cdot\log n}=\infty\label{eq: only if limsup}
\end{align}
and
\begin{align}
\liminf_{n\to\infty}\frac{S_n^{b_n}}{n\cdot\log n}\leq 1.\label{eq: only if liminf}
\end{align}
\end{thm}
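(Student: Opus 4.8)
The plan rests on the symbolic coding of the doubling map: under $\lambda$ the binary digits $(x_k)_{k\ge1}$ of $x$ are i.i.d.\ fair coin flips and $\tau^{k-1}x=0.x_kx_{k+1}\dots$, so $a_k=\chi(\tau^{k-1}x)\ge 2^r$ for a.e.\ $x$ whenever $x_k=\dots=x_{k+r-1}=0$. The decisive elementary observation is that one long run of zeros produces many large digits at once: if $x_k=\dots=x_{k+r-1}=0$ with $k+r-1\le n$, then $a_{k+i}\ge 2^{\,r-i}$ for $0\le i\le r-1$, so — provided only $b_n<r$ — the $b_n+1$ values $a_k,\dots,a_{k+b_n}$ are all $\ge 2^{\,r-b_n}$; hence the $(b_n{+}1)$-st largest of $a_1,\dots,a_n$ is $\ge 2^{\,r-b_n}$ and $S_n^{b_n}\ge a_{\pi(b_n+1)}\ge 2^{\,r-b_n}$. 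To prove \eqref{eq: only if limsup} it therefore suffices to produce, a.s.\ along a subsequence $n_j\to\infty$, a run of zeros of some length $r_j$ starting inside $\{1,\dots,n_j\}$ for which $2^{\,r_j-b_{n_j}}/(n_j\log n_j)\to\infty$.

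I would do this by a second Borel--Cantelli argument along $n_j:=\lceil e^j\rceil$, so that $\lfloor\log n_j\rfloor=j$ and, by the definition of $b_n$, $(\log n_j)\,2^{b_{n_j}}\asymp\psi(j)$ (when the bracket defining $b_n$ is $\le0$ no trimming is prescribed, $S_n^{b_n}=S_n$, and the claim reduces to the non-trimmed case, see the last paragraph; so assume $b_{n_j}\ge0$). Set $r_j:=\big\lceil\log_2 n_j+\log_2\log n_j+b_{n_j}+\omega_j\big\rceil$ with $\omega_j\to\infty$ to be chosen, so that $2^{\,r_j-b_{n_j}}\ge n_j\log n_j\cdot 2^{\omega_j}$ and automatically $b_{n_j}<r_j$. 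Let $\widetilde R_j$ be the longest run of zeros lying entirely inside the block $B_j:=(n_{j-1},n_j]$; since the blocks are disjoint, the $\widetilde R_j$ are \emph{independent}. A one-sided second-moment estimate for the number of runs of length $\ge r$ in a block of length $\ell$ — counting "fresh" starts $x_{i-1}=1,\ x_i=\dots=x_{i+r-1}=0$, any two of which are either mutually exclusive or built from disjoint digits — gives $\lambda(\widetilde R_j\ge r_j)\asymp \ell\,2^{-r_j}\asymp n_j\,2^{-r_j}\asymp\big(\psi(j)\,2^{\omega_j}\big)^{-1}$; crucially one does \emph{not} lose the factor $1/r_j$ incurred by the cruder disjoint-window bound, which here would be fatal. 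Since $\psi\in\overline\Psi$ we have $\Sigma_j:=\sum_{i\le j}1/\psi(i)\to\infty$; taking $2^{\omega_j}:=\sqrt{\Sigma_j}$ (so $\omega_j\to\infty$) and using $\sum_j(\Sigma_j-\Sigma_{j-1})/\sqrt{\Sigma_j}\ge\sum_j(\sqrt{\Sigma_j}-\sqrt{\Sigma_{j-1}})=\infty$ gives $\sum_j\lambda(\widetilde R_j\ge r_j)=\infty$. By Borel--Cantelli, a.s.\ $\widetilde R_j\ge r_j$ for infinitely many $j$, and on those $j$, $S_{n_j}^{b_{n_j}}/(n_j\log n_j)\ge 2^{\,r_j-b_{n_j}}/(n_j\log n_j)\ge 2^{\omega_j}\to\infty$, which is \eqref{eq: only if limsup}.

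For \eqref{eq: only if liminf}, since $S_n^{b_n}\le S_n$ it suffices to show $\liminf_n S_n/(n\log n)\le1$. Now $\lambda(\max_{k\le n}a_k> n\log n)\le n\cdot\lambda(\chi> n\log n)\le 1/\log n$, so along a sufficiently sparse subsequence $n_j$ (e.g.\ $n_j=2^{2^j}$) the first Borel--Cantelli lemma gives that a.s.\ eventually no $a_k$ with $k\le n_j$ exceeds $n_j\log n_j$, whence $S_{n_j}=\sum_{k\le n_j}\big(a_k\wedge(n_j\log n_j)\big)$. Since $\int\!\big(\chi\wedge(n\log n)\big)\,\mathrm{d}\lambda=\sum_{m\le n\log n}1/m=\log n+O(\log\log n)$ and $\chi\wedge L$ has second moment $O(L)$, a routine truncation-and-concentration argument — of exactly the kind underpinning the positive result, using the $\bm{\alpha}$-mixing of Section~\ref{subsec: mixin prop} — yields $\sum_{k\le n}\big(a_k\wedge(n\log n)\big)\sim n\log n$ a.s.; hence $\liminf_n S_n/(n\log n)\le\lim_j S_{n_j}/(n_j\log n_j)=1$.

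The step I expect to be the main obstacle is the sharp bound $\lambda(\widetilde R_j\ge r_j)\asymp n_j 2^{-r_j}$ together with the coupled calibration of $r_j$ and $\omega_j$: the lower bound for the run probability must carry no logarithmic loss so that, after absorbing the divergent weights $1/\psi(j)$, the Borel--Cantelli series still diverges, while simultaneously $2^{\,r_j-b_{n_j}}$ must overtake $n_j\log n_j$ by the same slowly growing factor $2^{\omega_j}$. Everything else is soft — the coding, the "one run gives $b_n{+}1$ large digits" observation, and the truncation estimate behind \eqref{eq: only if liminf}; the only remaining nuisance is the degenerate regime where the bracket defining $b_n$ is $\le0$ (i.e.\ $\psi$ so small that no trimming is prescribed), where $S_n^{b_n}=S_n$ and \eqref{eq: only if limsup} follows from the non-trimmed divergence of $(S_n)$ combined with the bound $\liminf_n S_n/(n\log n)\ge1$ coming from the truncated sums $\sum_{k\le n}(a_k\wedge n)\sim n\log n$.
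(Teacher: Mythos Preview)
Your $\limsup$ argument rests on the same mechanism as the paper's: one large value of $\chi$ (equivalently, one long run of zeros) forces many consecutive large $a_k$'s, so that after removing $b_n$ of them a large one still survives. The paper phrases this via Lemma~\ref{lem: ai+k} and then, working on dyadic blocks $I_k=[2^k,2^{k+1})$, shows that the events $\{a_i\ge 2^{j+u+5}\omega(j)\}$ occur infinitely often by invoking Kim's \emph{dynamical} Borel--Cantelli lemma (Lemma~\ref{lem: dyn BC-lemma}) directly for $\tau$. You instead pass to the binary coding, take disjoint digit blocks $B_j=(n_{j-1},n_j]$ so that the run lengths $\widetilde R_j$ are genuinely independent, and use classical Borel--Cantelli together with a second--moment lower bound for $\lambda(\widetilde R_j\ge r_j)$. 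Both routes work; yours trades the dynamical Borel--Cantelli input for an elementary run estimate, and the calibration $2^{\omega_j}=\sqrt{\Sigma_j}$ is a neat device. One point to tighten: you must restrict to those $j$ with $r_j\le |B_j|$ (otherwise the run cannot fit and the event has probability zero); the paper handles the analogous exception via its set $\Gamma$ and the observation that the bad $j$ --- where $\psi(j)$ is doubly exponential --- contribute a convergent series, and you need the same remark.

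For the $\liminf$ there is a genuine gap. The paper obtains $\liminf_n S_n^{b_n}/(n\log n)\le 1$ from $S_n^{b_n}\le S_n$ and Theorem~\ref{thm: weak conv} (convergence $S_n/(n\log n)\to 1$ in probability, hence along a subsequence a.s.). Your direct route truncates at $L=n\log n$ and asserts that a ``routine'' concentration gives $\sum_{k\le n}(a_k\wedge L)\sim n\log n$ a.s.; but at this truncation level the covariance contribution to the variance, via the BV decay-of-correlations bound $|\mathrm{Cov}(a_i^L,a_j^L)|\lesssim 2^{-|i-j|}L\log L$, is of order $nL\log L\asymp n^2(\log n)^2$, the \emph{same} order as $\big(\mathbb{E}T_n^{L}\big)^2$. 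So Chebyshev gives nothing, and the argument ``underpinning the positive result'' does not transfer --- Lemma~\ref{lem: Tn tn a.s.} takes $t_n=n(\log n)^{3/4}$ precisely so that this ratio is $O((\log n)^{-1/4})$ and the exponential inequality bites. The fix is immediate: either cite Theorem~\ref{thm: weak conv} as the paper does, or truncate at $t_n=n(\log n)^{3/4}$, use Lemma~\ref{lem: Tn tn a.s.} for $T_n^{t_n}\sim n\log n$ a.s., and note that along $n_j=2^{2^j}$ one has $\sum_j\lambda(\max_{k\le n_j}a_k>t_{n_j})\le\sum_j(\log n_j)^{-3/4}<\infty$, so a.s.\ eventually $S_{n_j}=T_{n_j}^{t_{n_j}}$.
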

For comparison with the previous example, let
$\psi\left(n\right)=n\cdot\log n$.
Then we have that $\psi\in\overline{\Psi}$ and that
\[b_n=\frac{1}{\log 2}\cdot\log\log\log n+o\left(1\right),\]
which is only slightly smaller than the sequences from before. However, for this choice of trimming sequence both \eqref{eq: only if limsup} and \eqref{eq: only if liminf} hold almost surely.

\begin{Rem}\label{rem: behav tilde chi}
The previous statements remain unchanged if we consider $\widetilde{\chi}_1$ with $\widetilde{\chi}_1(x)=1/x$
instead of $\chi$. Let $\widetilde{a}_n=\coloneqq\widetilde{\chi}_1\circ \tau^{n-1}$ and let $\widetilde{S}_n^b$ be defined as $S_n^b$ using $(\widetilde{a}_n)$ instead of $(a_n)$. 
Then we particularly have $0\leq \widetilde{\chi}_1(x)-\chi(x)<1$ for all $x\in[0,1)$
and thus $\big|\widetilde{S}_n^{b_n}-{S}_n^{b_n}\big|\leq n$ and the statements in \eqref{eq: if lim}, \eqref{eq: only if limsup}, and \eqref{eq: only if liminf} do not change if we replace $S_n^{b_n}$ by $\widetilde{S}_n^{b_n}$.
\exend
\end{Rem}

As a companion to above results, we will also consider the distributional properties of the partial sums $S_n$. In this direction we will prove the following theorem.
\begin{thm}\label{thm: weak conv}
We have that
\begin{align}
\lim_{n\to\infty}\frac{S_n}{n\log n}=1\label{eq: Sn log n distr}
\end{align}
in distribution.
\end{thm}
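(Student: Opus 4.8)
The plan is to prove the formally stronger statement that $S_n/(n\log n)\to 1$ \emph{in probability}; since the limit is a constant, this is equivalent to the asserted convergence in distribution. The point is that $\chi$ is not integrable but only barely so: from $\lambda(\chi=m)=\frac{1}{m(m+1)}$ one reads off $\lambda(\chi>t)\asymp 1/t$ together with the truncated moment asymptotics
\begin{align*}
\int\chi\cdot\mathbf{1}_{\{\chi\le L\}}\,\mathrm d\lambda&=\sum_{m=1}^{L}\frac{1}{m+1}=\log L+O(1),\\
\int\chi^2\cdot\mathbf{1}_{\{\chi\le L\}}\,\mathrm d\lambda&=\sum_{m=1}^{L}\frac{m}{m+1}=L+O(\log L).
\end{align*}
This is precisely the setting of the classical Feller weak law for partial sums; the only thing that must be replaced, relative to the i.i.d.\ argument, is the independence of the summands, which we substitute by the mixing of the doubling map.

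First I would fix the truncation level $L_n\coloneqq n\log\log n$ and put $\chi_n\coloneqq\chi\cdot\mathbf{1}_{\{\chi\le L_n\}}$ and $\bar S_n\coloneqq\sum_{k=1}^n\chi_n\circ\tau^{k-1}$. Then $\chi_n$ is of bounded variation with $\|\chi_n\|_{\mathrm{BV}}=O(L_n)$ and $\|\chi_n\|_{L^1(\lambda)}=O(\log n)$, and by $\tau$-invariance of $\lambda$,
\begin{align*}
\int\bar S_n\,\mathrm d\lambda=n\int\chi_n\,\mathrm d\lambda=n\log L_n+O(n)=\bigl(1+o(1)\bigr)\,n\log n.
\end{align*}
Moreover $\bar S_n$ and $S_n$ agree off the event $\{\max_{1\le k\le n}a_k>L_n\}$, whose $\lambda$-measure is at most $\sum_{k=1}^n\lambda(\chi\circ\tau^{k-1}>L_n)=n\,\lambda(\chi>L_n)\le n/L_n=1/\log\log n\to 0$. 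Hence it suffices to prove $\bar S_n/(n\log n)\to 1$ in probability, and by Chebyshev's inequality, combined with the mean computation above, this in turn follows from the variance bound $\mathrm{Var}_\lambda(\bar S_n)=o\bigl((n\log n)^2\bigr)$.

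For the variance I would expand $\mathrm{Var}_\lambda(\bar S_n)=\sum_{i,j=1}^n\mathrm{Cov}_\lambda\bigl(\chi_n\circ\tau^{i-1},\chi_n\circ\tau^{j-1}\bigr)$, bound each of the $n$ diagonal terms by $\int\chi_n^2\,\mathrm d\lambda=O(L_n)$, and handle the off-diagonal terms by decay of correlations. By $\tau$-invariance one has $\mathrm{Cov}_\lambda(\chi_n\circ\tau^{i-1},\chi_n\circ\tau^{j-1})=\mathrm{Cov}_\lambda(\chi_n\circ\tau^{|i-j|},\chi_n)$, and the spectral gap of the transfer operator of $\tau$ on functions of bounded variation (the standard exponential decay of correlations for the doubling map) yields $C>0$ and $\rho\in(0,1)$ with
\begin{align*}
\bigl|\mathrm{Cov}_\lambda(\chi_n\circ\tau^{d},\chi_n)\bigr|\le C\,\rho^{d}\,\|\chi_n\|_{L^1(\lambda)}\,\|\chi_n\|_{\mathrm{BV}}=O\bigl(\rho^{d}\,L_n\log n\bigr)\qquad(d\ge 1).
\end{align*}
It is essential here to use this \emph{asymmetric} estimate, in which only one of the two factors is measured in the $\mathrm{BV}$-norm: $\|\chi_n\|_{L^1}$ stays of order $\log n$, whereas $\|\chi_n\|_{\mathrm{BV}}$ grows like $L_n$, so the naive bound $\mathrm{Var}_\lambda(\bar S_n)\le n^2\int\chi_n^2$ is far too weak. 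Summing over $1\le i,j\le n$ and using $\sum_{d\ge1}\rho^{d}<\infty$ gives $\mathrm{Var}_\lambda(\bar S_n)=O(nL_n)+O(nL_n\log n)=O(nL_n\log n)$, hence $\mathrm{Var}_\lambda(\bar S_n)/(n\log n)^2=O\bigl(L_n/(n\log n)\bigr)=O\bigl(\log\log n/\log n\bigr)\to 0$.

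Putting the three ingredients together — the mean asymptotics, the variance bound via Chebyshev, and the negligibility of the truncation — gives $S_n/(n\log n)\to 1$ in probability, and therefore in distribution. The only point that requires care is the variance estimate: one must invoke the $L^1$-versus-$\mathrm{BV}$ form of the decay of correlations for $\tau$, so that the growing $\mathrm{BV}$-norm of the truncated observable is compensated by its small $L^1$-norm, and one must choose the truncation level in the window $n\ll L_n\ll n\log n$, which simultaneously forces the truncation error $n/L_n$ and the normalised variance $L_n/(n\log n)$ to vanish.
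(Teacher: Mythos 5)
Your proof is correct, but it takes a genuinely different and more elementary route than the paper. The paper truncates at $r_n = n\log n$ and applies a relative stability theorem of Szewczak (Lemma \ref{lem: szewczak}), which requires verifying three conditions: Property $\mathbf{B}$ (deduced from the $\bm{\alpha}$-mixing of $(a_n)$, itself proved via the same transfer-operator decay of correlations you use), negligibility of the tail beyond $r_n$, and uniform integrability of $T_n^{r_n}/\mathbb{E}(T_n^{r_n})$ via de la Vall\'ee-Poussin with $h(x)=x^2$. That last item forces a second-moment computation identical in spirit to your covariance expansion. Your argument sidesteps Szewczak and $\bm{\alpha}$-mixing entirely: by truncating at the slightly lower level $L_n = n\log\log n$ (which keeps $n\ll L_n\ll n\log n$), you can push the variance bound down to $O(nL_n\log n)=o\bigl((n\log n)^2\bigr)$ and conclude with Chebyshev. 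The crucial ingredient is the same in both proofs --- the asymmetric decay of correlations $\bigl|\int(\varphi\circ\tau^n)\zeta\,\mathrm d\lambda-\int\varphi\,\mathrm d\lambda\int\zeta\,\mathrm d\lambda\bigr|\le 2^{-n}\|\varphi\|_1\mathsf{V}(\zeta)$, which the paper derives in Lemma \ref{lem: Xn alpha mixing} --- but you exploit it directly to close the argument in one Chebyshev step, whereas the paper routes it through a second-moment/uniform-integrability estimate because its truncation level sits exactly at $n\log n$, where the normalised variance is only $O(1)$. Your approach is cleaner and shorter; the paper's choice of $r_n=n\log n$ is more natural in context (it is the norming sequence itself) but forces the detour through Szewczak's theorem.
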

\begin{Rem}
The weak limit theorem is in line with 
the weak limit law for the continued fractions expansion  
$\lim_{n\rightarrow\infty}\sum_{k=1}^nc_k/\left(n\log n\right)=1/\log2$
in probability, see \cite{khintchine_metrische_1935}.
It is likely that the mixing properties of a dynamical system 
have less influence on weak as on strong convergence.
\exend
\end{Rem}

The paper is structured as follows: 
We fist introduce some truncated random variables in Section \ref{sec: truncated rv} which are crucial for the proofs of all three theorems. 
In Section \ref{sec: proof if part} we give the proof of Theorem \ref{thm: if part}
including a skeleton of the proof in Section \ref{subsec: proof main thm} and the details in the subsequent sections.

As we need the statement of Theorem \ref{thm: weak conv} for the proof of Theorem \ref{thm: only if part},
we will first give the proof of Theorem \ref{thm: weak conv} in Section \ref{sec: proof weak conv}
and conclude the paper with a proof of Theorem \ref{thm: only if part} in Section \ref{sec: proof only if}.

\section{Truncated random variables}\label{sec: truncated rv}
For $i,r\ge 1$ define the truncated random variables
\begin{align}
 a_i^{r}\coloneqq a_i\cdot\mathbbm{1}_{\left\{a_i\leq  r\right\}}\,\,\,\text{ and }\,\,\,T_n^{r}\coloneqq \sum_{i=1}^n a_i^{r}.\label{eq: def air Tnr}
\end{align}
Further, denote by $F$ the distribution function of $a_1$, which is given explicitly by
\begin{align}
F\left(x\right)=\begin{cases}1-\frac{1}{1+\lfloor x\rfloor}&\text{if } x\ge 0,\\ 0&\text{if } x<0.\end{cases}\label{eq: F dist func}
\end{align}

With this at hand we are able to compute asymptotic formulas 
for the expectation of the above random variables as follows.
\begin{lemma}
If $\left(f_n\right)$ is a sequence which tends to infinity then we have,
as $n\rar\infty$, that
\begin{align}
\mathbb{E}\left(a_1^{f_n}\right)=\int_0^{f_n}x\mathrm{d}F\left(x\right)&\sim\log f_n,\label{eq: E X tn}
\end{align}
and
\begin{align}
\mathbb{E}\left(T_n^{f_n}\right)=n\cdot\int_0^{f_n}x\mathrm{d}F\left(x\right)&\sim n\cdot\log f_n.\label{eq: E Tn tn}
\end{align}
\end{lemma}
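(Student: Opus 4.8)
The claim is an elementary computation with the explicit distribution function $F$ in \eqref{eq: F dist func}, so the plan is to reduce everything to \eqref{eq: E X tn}; the identity $\mathbb{E}(T_n^{f_n})=n\cdot\mathbb{E}(a_1^{f_n})$ in \eqref{eq: E Tn tn} is then immediate from linearity of expectation together with the fact that $(a_i)$ is a stationary sequence, so each $a_i$ has the same distribution $F$ as $a_1$, hence each $a_i^{f_n}=a_i\cdot\mathbbm 1_{\{a_i\le f_n\}}$ has the same expectation as $a_1^{f_n}$. So the whole content is \eqref{eq: E X tn}.

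For \eqref{eq: E X tn} I would first note that $a_1$ takes only the values $j\in\{1,2,3,\dots\}$ (the value $\infty$ being attained on a $\lambda$-null set), and from $F$ one reads off $\lambda(a_1=j)=F(j)-F(j-1)=\frac1j-\frac1{j+1}=\frac1{j(j+1)}$ for $j\ge 1$. Then, writing $m=\lfloor f_n\rfloor$,
\begin{align*}
\mathbb{E}\left(a_1^{f_n}\right)=\sum_{j=1}^{m} j\cdot\frac{1}{j(j+1)}=\sum_{j=1}^{m}\frac{1}{j+1}=\left(\sum_{j=1}^{m+1}\frac1j\right)-1.
\end{align*}
By the standard asymptotics of the harmonic numbers, $\sum_{j=1}^{m+1}\frac1j=\log(m+1)+O(1)$, and since $f_n\to\infty$ forces $m=\lfloor f_n\rfloor\to\infty$ and $\log(m+1)\sim\log f_n$, we get $\mathbb{E}(a_1^{f_n})\sim\log f_n$. (Alternatively one can phrase this as a Riemann–Stieltjes integral $\int_0^{f_n}x\,\mathrm dF(x)$ and integrate by parts as $\int_0^{f_n}x\,\mathrm dF(x)=f_n\,F(f_n)-\int_0^{f_n}F(x)\,\mathrm dx=\int_0^{f_n}(1-F(x))\,\mathrm dx$, then use $1-F(x)=\frac{1}{1+\lfloor x\rfloor}\sim\frac1x$ and the comparison $\int_1^{f_n}\frac{dx}{x}=\log f_n$; both routes give the same leading term.)

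There is essentially no obstacle here — the only point requiring a word of care is that $a_1$ is $\infty$ with positive probability a priori, i.e. on $\{x:1/x\in\mathbb Z\}$, but this is a countable hence $\lambda$-null set, so the truncation at level $f_n$ makes $a_1^{f_n}$ bounded and all sums finite, and the $\sim$ in both displays refers to the $n\to\infty$ limit with the ratio tending to $1$. I would present the harmonic-sum computation as the main line and mention the integration-by-parts form only parenthetically.
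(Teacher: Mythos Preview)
Your proposal is correct and follows essentially the same route as the paper: both compute $\mathbb{E}(a_1^{f_n})=\sum_{j=1}^{\lfloor f_n\rfloor}\frac{1}{j+1}$ directly from the point masses of $F$ and invoke the harmonic-sum asymptotic, then use stationarity of $(a_i)$ for \eqref{eq: E Tn tn}. One small slip in your closing caveat: $a_1=\infty$ only at $x=0$, not on $\{x:1/x\in\mathbb Z\}$, and this is a single point (hence measure zero, not ``positive probability a priori''); this does not affect the argument.
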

Here and in the following we write $g_n\sim h_n$ for two sequences of reals if $\lim_{n\to\infty}g_n/h_n=1$.
\begin{proof}
It is clear that the distribution function of $a_1^{f_n}$ is given by
\[F_{f_n}(x)=\mathbbm{1}_{[0,f_n]}(x)\cdot (1-F(f_n)+F(x))+\mathbbm{1}_{(f_n,\infty)}(x),\]
therefore we have that
\begin{align*}
\mathbb{E}\left(a_1^{f_n}\right)
&=\int_{\R}x~\mathrm{d}F_{f_n}(x)
=\int_0^{f_n}x~\mathrm{d}F\left(x\right)
=\sum_{k=1}^{f_n}k\cdot\left(\left(1-\frac{1}{k+1}\right)-\left(1-\frac{1}{k}\right)\right)\\
&=\sum_{k=1}^{f_n}\frac{1}{k+1}\sim\log f_n.
\end{align*}
The proof of \eqref{eq: E Tn tn} then follows easily from the fact that, since $\lambda$ is invariant with respect to the map $\tau$, the function $F_{f_n}$ is the distribution function of $a_i^{f_n},$ for any choice of $i,n\ge 1$.
\end{proof}

\section{Proof of Theorem \ref{thm: if part}}\label{sec: proof if part}
\subsection{Proof of main part of Theorem \ref{thm: if part}}\label{subsec: proof main thm}
In this section we will give a skeleton of the proof of Theorem \ref{thm: if part}.
The proof is based on three main lemmas, Lemma \ref{lem: number ak> betak io}, Lemma \ref{lem: sum ak> io}, and Lemma \ref{lem: Tn tn a.s.} which we will state first.
\begin{lemma}\label{lem: number ak> betak io}
For all $\psi\in\Psi$ and all $\epsilon>0$ we have that
\begin{align*}
  \lambda\left(\#\left\{k\leq n\colon a_k>\epsilon\cdot n\cdot\log n\right\}>\left\lfloor \frac{\log\psi\left(\left\lfloor\log n\right\rfloor\right)-\log\log n}{\log 2}\right\rfloor~\text{ i.o.}\right)=0.
\end{align*}
\end{lemma}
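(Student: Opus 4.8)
\emph{The plan.} The statement is an ``eventually'' statement, so the natural tool is the first Borel--Cantelli lemma, but a direct union bound over the $n$'s and over $(b_n{+}1)$-tuples of indices will be far too lossy; instead I would (a) pass to a sparse sequence by a blocking argument, and (b) exploit the dyadic structure of $\tau$ to rephrase the event in terms of runs of zeros in the binary expansion of $x$, for which the relevant probabilities can be computed sharply.

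\emph{Blocking.} Write $N_n:=\#\{k\le n\colon a_k>\epsilon\cdot n\log n\}$ and, for $m\in\N$, put $I_m:=\{n\colon\lfloor\log n\rfloor=m\}$ and $M_m:=\lceil e^{m+1}\rceil$. For $n\in I_m$ one has $n\le M_m$ and $\epsilon n\log n\ge\epsilon e^{m}m$, so $N_n\le\widetilde N_m:=\#\{k\le M_m\colon a_k>\epsilon e^{m}m\}$; and by \eqref{eq: cond bn}, since $\log n<m+1$ on $I_m$, one gets $b_n\ge\beta_m:=\min_{n\in I_m}b_n$ with $2^{\beta_m}\ge\psi(m)/\big(2(m+1)\big)$. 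Hence $\bigcup_{n\in I_m}\{N_n>b_n\}\subseteq\{\widetilde N_m>\beta_m\}$, and since any $x$ lying in $\{N_n>b_n\}$ for infinitely many $n$ lies in $\bigcup_{n\in I_m}\{N_n>b_n\}$ for infinitely many $m$, it suffices by Borel--Cantelli to show $\sum_m\lambda(\widetilde N_m>\beta_m)<\infty$.

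\emph{Translation to zero-runs and the decisive estimate.} Expanding $x=0.x_1x_2\cdots$ in base $2$ gives $\tau^{k-1}x=0.x_kx_{k+1}\cdots$, so with $t:=\epsilon e^{m}m$ and $\rho:=\lfloor\log_2 t\rfloor$ the event $\{a_k>t\}=\{0.x_kx_{k+1}\cdots<1/(\lfloor t\rfloor+1)\}$ is contained in $\{x_k=x_{k+1}=\cdots=x_{k+\rho-1}=0\}$, an event of probability $2^{-\rho}\asymp 1/t$. Thus $\{\widetilde N_m>\beta_m\}$ is contained in the event that at least $\beta_m+1$ integers $k\le M_m$ start a run of $\ge\rho$ consecutive zeros of $x$; and that number of starting points is exactly $\sum_\nu(\ell_\nu-\rho+1)$, summed over the maximal zero-blocks of $x$ that begin in $[1,M_m]$ and have length $\ell_\nu\ge\rho$. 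If this sum is $\ge\beta_m+1$, then either (i) one such block has length $\ge\rho+\beta_m$ --- a union bound over its starting point gives probability $\le M_m2^{-(\rho+\beta_m)}\asymp (e/\epsilon)m^{-1}2^{-\beta_m}\lesssim \epsilon^{-1}/\psi(m)$, using $2^{\beta_m}\ge\psi(m)/(2(m+1))$ --- or (ii) at least two such blocks occur; distinct maximal zero-blocks are separated by a $1$, so the corresponding ``all-zero'' events live on disjoint digit sets and are independent, and summing over the number $p\ge2$ of contributing blocks, over their separated starting points and over their lengths yields a bound dominated by its $p=2$ term, namely $\lesssim \epsilon^{-2}m^{-1}2^{-\beta_m}\lesssim\epsilon^{-2}/\psi(m)$ (the geometric series in $p$ converges because $M_m2^{-\rho}\asymp e/(\epsilon m)$ is small). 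Altogether $\lambda(\widetilde N_m>\beta_m)\le C(\epsilon)/\psi(m)$, and the series converges since $\psi\in\Psi$.

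\emph{The obstacle.} The real difficulty is the estimate in the previous paragraph. The temptation is a $(\beta_m{+}1)$-fold union bound over tuples $k_1<\dots<k_{\beta_m+1}$ with $a_{k_j}>\epsilon e^m m$, but for $\tau$ the large digits arrive in clumps --- a single long run of zeros forces a whole string of large $a_k$'s --- so that union bound overcounts all the tuples living inside one clump and only produces the non-summable estimate $\lambda(\widetilde N_m>\beta_m)\lesssim 1/m$. The point is precisely to recount via maximal zero-runs, whose lengths are geometric and (being separated by $1$'s) independent, so that the dominant configuration is one long run and carries the extra factor $2^{-\beta_m}\asymp m/\psi(m)$ that restores summability. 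A minor amount of bookkeeping is needed to keep the constants in ``$2^{\beta_m}\asymp\psi(m)/m$'' and ``$2^{\rho}\asymp\epsilon e^m m$'' explicit, and to observe that when $\beta_m$ is large the factor $2^{-\beta_m}$ is already negligible so the estimate in (ii) causes no trouble there either.
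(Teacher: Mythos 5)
Your proof is correct in its essentials, and it takes a recognisably different route from the paper's, though the two share the same core insight. The paper passes to the induced transformation $\tau_B$ on $B=[1/2,1)$, establishes independence of the induced digits $\beta_i=\chi\circ\tau_B^{i-1}$ via the L\"uroth-type structure of $\tau_B^m$ (Lemma \ref{lem: indep of mth entry} and Corollary \ref{cor: indep of 1st entry}), and then proves three auxiliary statements --- Lemma \ref{lem: Xi> io 0} (no $\beta_i\ge n\psi(\lfloor\log n\rfloor)$ eventually), Lemma \ref{lem: Xi> 2x io 0} (at most one $\beta_i\ge n(\log n)^{3/4}$ eventually), and Lemma \ref{lem: ai+k} (the clumping $a_{i+j}=\lfloor a_i/2^j\rfloor$) --- and combines them. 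You bypass the $\tau_B$ machinery entirely and work directly with the binary expansion, translating $\{a_k>t\}$ into a zero-run of length $\lfloor\log_2 t\rfloor$ starting at $k$; your two cases (one maximal zero-block of length $\ge\rho+\beta_m$, or at least two maximal zero-blocks of length $\ge\rho$) play exactly the roles of the paper's Lemma \ref{lem: Xi> io 0}/Lemma \ref{lem: ai+k} and Lemma \ref{lem: Xi> 2x io 0}, respectively, with the independence of digits on disjoint index sets substituting for Corollary \ref{cor: indep of 1st entry}. Your version is more self-contained and transparent for $\tau$ specifically, since $\tau_B$ and the $\beta_i$ are, for this map, precisely a bookkeeping device for the zero-runs you work with; the paper's route is set up to generalise to other piecewise expanding maps with an induced L\"uroth structure, and it reuses Lemmas \ref{lem: Xi> io 0} and \ref{lem: Xi> 2x io 0} elsewhere (in Lemma \ref{lem: sum ak> io} and Lemma \ref{lem: sum phi 2n}), which is why the paper factors them out.

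One small inaccuracy worth flagging: in your case (ii), the event ``at least two maximal zero-blocks of length $\ge\rho$ start in $[1,M_m]$'' already has probability $\lesssim (M_m 2^{-\rho})^2\asymp\epsilon^{-2}m^{-2}$, with no $2^{-\beta_m}$ factor at all; imposing the additional length constraint $\sum_\nu(\ell_\nu-\rho+1)\ge\beta_m+1$ buys you an extra $\beta_m 2^{-\beta_m}$, giving $\lesssim\epsilon^{-2}m^{-2}\beta_m 2^{-\beta_m}$, not the $\epsilon^{-2}m^{-1}2^{-\beta_m}$ you wrote. Either way the term is summable in $m$ (even the crude $\epsilon^{-2}m^{-2}$ is), so the conclusion is unaffected; just be aware that the stated intermediate bound does not quite match either the constrained or the unconstrained computation. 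It is also worth noting explicitly, as the paper does via the set $\Omega'$, that the binary-expansion argument should be carried out on the full-measure set of $x$ with no finite binary expansion, so that the digit sequence and the zero-runs are unambiguous.
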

Here and in the following we abbreviate "infinitely often" by "i.o."

Next, we give a lemma stating that the large digits do not contribute too much to a truncated sum.
\begin{lemma}\label{lem: sum ak> io}
For $\epsilon>0$ and $t_n=n\cdot\left(\log n\right)^{3/4}$ we have that
\begin{align*}
\lambda\left(\sum_{i=1}^{n}a_i\mathbbm{1}_{\left\{t_n\leq a_i\leq \epsilon\cdot n\cdot\log n\right\}}\geq 3\epsilon\cdot n\cdot\log n~\text{ i.o.}\right)=0.
\end{align*}
\end{lemma}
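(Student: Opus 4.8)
The plan is to use a Borel--Cantelli argument along a suitable subsequence together with a monotonicity/interpolation argument to pass back to all $n$. First I would fix $\epsilon>0$ and consider the truncated random variables
\[
Y_i^{(n)}\coloneqq a_i\,\mathbbm{1}_{\{t_n\le a_i\le \epsilon n\log n\}},
\]
so that the quantity of interest is $Z_n\coloneqq\sum_{i=1}^n Y_i^{(n)}$. The first key step is to estimate $\mathbb{E}(Z_n)$. Since the $a_i$ all have distribution function $F$ from \eqref{eq: F dist func}, invariance of $\lambda$ gives
\[
\mathbb{E}(Z_n)=n\int_{t_n}^{\epsilon n\log n} x\,\mathrm{d}F(x)=n\sum_{k=\lceil t_n\rceil}^{\lfloor \epsilon n\log n\rfloor}\frac{1}{k+1}\sim n\bigl(\log(\epsilon n\log n)-\log t_n\bigr)=n\log\frac{\epsilon n\log n}{n(\log n)^{3/4}}+o(n),
\]
which is $n\bigl(\log\epsilon+\tfrac14\log\log n\bigr)+o(n\log\log n)$. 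Crucially this is $o(\epsilon n\log n)$, in fact much smaller; so the claim is really that $Z_n$ concentrates near its (small) mean and in particular stays below $3\epsilon n\log n$.

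The second key step is the concentration estimate. The variables $Y_i^{(n)}$ are bounded by $\epsilon n\log n$ and, although not independent, the process $(a_i)$ is $\bm{\alpha}$-mixing (as noted in the introduction, with the mixing rate discussed in Section \ref{subsec: mixin prop}); in fact for the doubling map $a_i$ and $a_j$ are genuinely independent once $|i-j|$ is large enough relative to the truncation level, since $a_i\le \epsilon n\log n$ depends only on the first $O(\log(n\log n))$ binary digits of $\tau^{i-1}x$. I would therefore bound the variance $\mathrm{Var}(Z_n)$ by the diagonal and near-diagonal terms: $\mathrm{Var}(Z_n)\ll n\cdot\mathbb{E}\bigl((Y_1^{(n)})^2\bigr)+ (\text{mixing correction})$, with $\mathbb{E}\bigl((Y_1^{(n)})^2\bigr)=\sum_{k}k^2\cdot\tfrac{1}{k(k+1)}\ll \epsilon n\log n$, so $\mathrm{Var}(Z_n)\ll \epsilon n^2\log n\cdot(\text{polylog})$. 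Then Chebyshev gives
\[
\lambda\bigl(Z_n\ge 3\epsilon n\log n\bigr)\le \lambda\bigl(Z_n-\mathbb{E}(Z_n)\ge \epsilon n\log n\bigr)\ll \frac{\epsilon n^2\log n\cdot(\text{polylog})}{\epsilon^2 n^2\log^2 n}=\frac{(\text{polylog})}{\epsilon\log n},
\]
which is not summable in $n$. So the third step is to run Borel--Cantelli along an exponentially growing subsequence $n_j=\lfloor\theta^j\rfloor$ (or $n_j=2^j$): on this subsequence $\sum_j \lambda(Z_{n_j}\ge 3\epsilon n_j\log n_j)$ is still only $\ll\sum_j j^{-c}$ for suitable polylog powers, which converges if the variance bound is good enough — here a moderate-deviation/Bernstein bound using the boundedness $Y_i^{(n)}\le \epsilon n\log n$ will give the needed extra decay $\exp(-c/\epsilon)$ or a higher power of $\log n_j$, making it summable. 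Alternatively one exploits that $\epsilon$ is fixed, takes the subsequence spacing depending on $\epsilon$, and uses a higher-moment (say fourth-moment) estimate of $Z_{n_j}$ to get summability.

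Finally, the fourth step is the interpolation back to all $n\in\N$. For $n_j\le n<n_{j+1}$, monotonicity in the truncation window is delicate because both endpoints $t_n$ and $\epsilon n\log n$ move, but one can sandwich: the set $\{t_n\le a_i\le\epsilon n\log n\}$ is contained in $\{t_{n_{j+1}}\le a_i\}\cap\{a_i\le\epsilon n_{j+1}\log n_{j+1}\}$ up to adjusting $\epsilon$ by the ratio $n_{j+1}/n_j$, which is bounded by $\theta$; so $Z_n\le \sum_{i=1}^{n_{j+1}} a_i\mathbbm{1}_{\{t_{n_{j+1}}/\theta\le a_i\le \theta\epsilon n_{j+1}\log n_{j+1}\}}$ up to harmless constant adjustments, and $n\log n\ge n_j\log n_j\ge \theta^{-1}n_{j+1}\log n_{j+1}(1+o(1))$. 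Choosing $\theta$ close to $1$ and replacing $3\epsilon$ by, say, $2\epsilon$ along the subsequence absorbs these losses; then the almost sure statement along $(n_j)$ upgrades to all $n$. The main obstacle I anticipate is the dependence structure: making the variance (or fourth-moment) bound on $Z_{n_j}$ summable along the subsequence requires quantitative control of the $\bm{\alpha}$-mixing coefficients of $(a_i)$ against the truncation scale $\epsilon n\log n$, and one must check that the mixing corrections to $\mathrm{Var}(Z_n)$ are genuinely lower order and do not destroy the $\exp(-c/\epsilon)$-type gain needed for Borel--Cantelli — this is exactly the place where the doubling map's explicit structure (independence of far-apart low-order digit functionals) should be used rather than a soft mixing bound.
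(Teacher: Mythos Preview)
Your moment/concentration route does not close, and the obstruction is quantitative rather than a missing technicality. With $K=\epsilon n\log n$ the deviation you need is $t\asymp K$ while the truncation cap is $M=K$; since $t/M$ stays bounded, Bernstein can never do better than
\[
2\exp\Bigl(-\frac{t^{2}}{2\,\mathrm{Var}(Z_n)+\tfrac{2}{3}Mt}\Bigr)\ \ge\ 2\exp\Bigl(-\frac{3t}{2M}\Bigr)=2e^{-3},
\]
a constant independent of $n$ and of $\epsilon$: the $Mt$ term alone kills any decay, irrespective of how small the variance is. Chebyshev and the fourth-moment bound give only $O\bigl(1/(\epsilon\log n)\bigr)$, since $\mathbb{E}\bigl((Y_1^{(n)})^2\bigr)=\sum_{k}\tfrac{k}{k+1}\asymp K$ so $\mathrm{Var}(Z_n)\asymp nK$, and the dominant fourth-moment term $n\,\mathbb{E}\bigl((Y_1^{(n)})^4\bigr)\asymp nK^{3}$ yields $nK^{3}/t^{4}\asymp 1/(\epsilon\log n)$. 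Along $n_j=2^{j}$ this is $\sum_j j^{-1}=\infty$; changing the subsequence spacing to depend on $\epsilon$ does not help, because the $j^{-1}$ rate is insensitive to the base. The near-diagonal covariances you worry about are in fact \emph{positive} and of the same order as the diagonal (precisely because $a_{i+1}\approx a_i/2$), so the mixing correction does not improve matters.

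The paper's argument is structural and bypasses moments entirely. It uses the deterministic halving $a_{i+j}=\lfloor a_i/2^{j}\rfloor$ within an excursion (Lemma~\ref{lem: ai+k}) to decompose $Z_n\le\sum_{k\le n}Z_{k,n}$ into block contributions indexed by the induced map $\tau_B$; on each block the truncated terms form a geometric series with first term at most $K$, so $Z_{k,n}\le 3\epsilon n\log n$ holds \emph{pointwise} on $\Omega'$. The only probabilistic ingredient is Lemma~\ref{lem: Xi> 2x io 0}: eventually a.s.\ at most one $\beta_k$ with $k\le n$ exceeds $t_n$, hence at most one $Z_{k,n}$ is nonzero. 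That lemma exploits the genuine independence of $(\beta_k)$ under $\tau_B$ to get $\lambda\bigl(\#\{k\le n:\beta_k\ge t_n\}\ge 2\bigr)\lesssim(\log n)^{-3/2}$, which \emph{is} summable over dyadic blocks. Your remark that $a_i$ and $a_j$ decouple for $|i-j|\gg\log n$ is the right instinct, but it has to be cashed in via this block decomposition and the per-block geometric bound, not through a global variance estimate on $Z_n$.
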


Finally, the third lemma gives a limiting result about the truncated sum defined in \eqref{eq: def air Tnr}.
\begin{lemma}\label{lem: Tn tn a.s.}
We set $t_n=n\left(\log n\right)^{3/4}$. 
Then
\begin{align*}
 \lim_{n\to\infty}\frac{T_n^{t_n}}{\mathbb{E}\left(T_n^{t_n}\right)}=1~\text{ a.s.}
\end{align*}
\end{lemma}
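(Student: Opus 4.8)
The plan is to establish the almost sure convergence $T_n^{t_n}/\mathbb{E}(T_n^{t_n})\to 1$ via a standard subsequence-plus-monotonicity argument, combined with a variance estimate that exploits the $\bm{\alpha}$-mixing property of the digits $(a_n)$. First I would record that by \eqref{eq: E Tn tn} we have $\mathbb{E}(T_n^{t_n})\sim n\log t_n\sim n\log n$, so it is equivalent to prove $T_n^{t_n}/(n\log n)\to 1$ a.s. The truncation level $t_n = n(\log n)^{3/4}$ grows only polynomially in $n$, so the truncated summands $a_i^{t_n}$ are bounded by $t_n$, which is the key feature that makes second-moment methods work. I would compute $\mathbb{E}\big((a_1^{t_n})^2\big) = \sum_{k=1}^{t_n} k^2\big(\tfrac{1}{k}-\tfrac{1}{k+1}\big) = \sum_{k=1}^{t_n}\tfrac{k}{k+1}\sim t_n$, so each truncated variable has variance of order $t_n$.

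Next I would control $\mathrm{Var}(T_n^{t_n})$. Writing $T_n^{t_n}=\sum_{i=1}^n a_i^{t_n}$, the variance is $\sum_i \mathrm{Var}(a_i^{t_n}) + 2\sum_{i<j}\mathrm{Cov}(a_i^{t_n},a_j^{t_n})$. The diagonal part contributes $O(n\,t_n)=O(n^2(\log n)^{3/4})$. For the covariances I would invoke the $\bm{\alpha}$-mixing bound from Section \ref{subsec: mixin prop}: since $a_i^{t_n}$ is measurable with respect to the $\sigma$-algebra generated by the first $\lceil\log_2 t_n\rceil$ binary digits of $\tau^{i-1}x$ (because $\chi(y)\le t_n$ depends only on finitely many digits of $y$), the covariance $\mathrm{Cov}(a_i^{t_n},a_j^{t_n})$ vanishes whenever $j-i$ exceeds this digit-count, up to an $\bm{\alpha}$-mixing error; more carefully, $|\mathrm{Cov}(a_i^{t_n},a_j^{t_n})|\le C\,t_n^2\,\alpha(j-i)$ with $\alpha(k)=2^{-k}$ for the doubling map, and the sum $\sum_{i<j} t_n^2\,2^{-(j-i)}=O(n\,t_n^2)=O(n^3(\log n)^{3/2})$ — which is too crude. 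The sharper route is to note that $a_i^{t_n}$ and $a_j^{t_n}$ are \emph{exactly} independent once $|i-j|$ exceeds the number of digits on which $\chi\cdot\mathbbm 1_{\{\chi\le t_n\}}$ depends, namely $O(\log t_n)=O(\log n)$, since the doubling map is a full Bernoulli shift on binary digits. Hence $\mathrm{Var}(T_n^{t_n}) = O\big(n\log n\cdot t_n\big)=O\big(n^2(\log n)^{7/4}\big)$.

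With this variance bound in hand, I would first prove convergence along the subsequence $n_j=\lceil j^2\rceil$ (or $n_j=2^j$): by Chebyshev, $\lambda\big(|T_{n_j}^{t_{n_j}}-\mathbb{E}(T_{n_j}^{t_{n_j}})|>\delta\,n_j\log n_j\big)\le \mathrm{Var}(T_{n_j}^{t_{n_j}})/(\delta n_j\log n_j)^2 = O\big((\log n_j)^{-1/4}\big)$, which is not yet summable, so I would take a faster subsequence, say $n_j=\lfloor\exp(j^{5})\rfloor$, making the right-hand side $O(j^{-5/4})$, summable; Borel–Cantelli then gives $T_{n_j}^{t_{n_j}}/(n_j\log n_j)\to 1$ a.s. Finally I would fill the gaps: for $n_j\le n< n_{j+1}$, monotonicity of $n\mapsto T_n^{t_n}$ is \emph{not} immediate because $t_n$ changes with $n$, so instead I would sandwich $T_n^{t_n}$ between $T_n^{t_{n_j}}$ and $T_n^{t_{n_{j+1}}}$ (truncation is monotone in the level), use $T_{n_j}^{t_{n_j}}\le T_n^{t_{n_j}}\le T_{n_{j+1}}^{t_{n_j}}$, and check that $\mathbb{E}(T_{n_{j+1}}^{t_{n_j}})/\mathbb{E}(T_{n_j}^{t_{n_j}})\to 1$, which holds provided the subsequence is not too sparse — forcing a compromise between summability in the Borel–Cantelli step and tightness of the sandwich. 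The main obstacle is precisely this tension: the variance only decays like $(\log n)^{-1/4}$ along a polynomially-spaced subsequence, so one must either spend effort on a sharper variance bound (extracting genuine cancellation from mixing rather than the trivial $O(t_n)$ per-term bound) or choose the subsequence carefully and argue the gap-filling step with a two-sided truncation comparison; I expect the cleanest writeup uses a subsequence of the form $n_j=\lfloor e^{j^\rho}\rfloor$ for a suitable $\rho>1$ and verifies both the summability and the ratio $\mathbb{E}(T_{n_{j+1}}^{t_{n_j}})/\mathbb{E}(T_{n_j}^{t_{n_{j+1}}})\to1$ by hand.
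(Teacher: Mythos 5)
Your plan is a genuine alternative route from the paper's (which induces to the jump transformation $\tau_B$, introduces the excursion observable $\eta$ and its piecewise-constant envelopes $v_m,w_m$, applies a maximal Bernstein inequality to genuinely independent blocks, and then undoes the time change via Lemma~\ref{lem: sum phi 2n}), but as written it does not close, for two reasons, the second of which is fatal.

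First, the claim that $a_i^{t_n}$ is ``exactly'' measurable with respect to finitely many binary digits of $\tau^{i-1}x$, so that $a_i^{t_n}$ and $a_j^{t_n}$ become \emph{exactly} independent once $|i-j|$ exceeds $O(\log n)$, is false: the breakpoints $1/k$ of $\chi$ are not dyadic, so $\chi^{t_n}$ is not a function of any finite prefix of the binary expansion. The covariance bound $O(t_n\cdot 2^{-(j-i)}\log n)$ can nevertheless be salvaged from the $L^1$--$BV$ decay of correlations estimate \eqref{eq: cor} (with $\|\chi^{t_n}\|_1\sim\log n$ and $\mathsf V(\chi^{t_n})\lesssim t_n$), so the variance bound $\operatorname{Var}(T_n^{t_n})=O\bigl(n^2(\log n)^{7/4}\bigr)$ is correct; the justification just needs to go through \eqref{eq: cor} rather than exact Bernoulli independence.

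Second, and decisively, this variance bound is too weak to feed the subsequence argument. Chebyshev gives $\lambda\bigl(|T_n^{t_n}-\mathbb E(T_n^{t_n})|>\delta\, n\log n\bigr)=O\bigl((\log n)^{-1/4}\bigr)$. Along $n_j=\lfloor \exp(j^\rho)\rfloor$ this is $\asymp j^{-\rho/4}$, so Borel--Cantelli needs $\rho>4$; but the sandwich step needs $n_{j+1}/n_j\to 1$, i.e.\ $(j+1)^\rho-j^\rho\to 0$, which forces $\rho<1$. Your suggestion ``$\rho>1$'' satisfies neither requirement: for $\rho>1$ the ratio $\mathbb E(T_{n_{j+1}}^{t_{n_j}})/\mathbb E(T_{n_j}^{t_{n_{j+1}}})\asymp n_{j+1}/n_j\to\infty$, and for $\rho\le 4$ the tail sum diverges. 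So there is no admissible $\rho$, and the plan as stated cannot be completed. To rescue it along these lines you would need to improve the variance to $O\bigl(n^2(\log n)^{3/4}\bigr)$, i.e.\ to show the off-diagonal covariances contribute at most as much as the diagonal $n\operatorname{Var}(a_1^{t_n})\sim n\,t_n$. That is plausibly true (for $k\le\lfloor\log_2 t_n\rfloor$ one has $a_{i+k}=\lfloor a_i/2^k\rfloor$ by Lemma~\ref{lem: ai+k}, which gives a raw $2^{-k}t_n$ decay on the near-diagonal), and with that bound $\rho\in(4/5,1)$ would satisfy both conditions. But extracting that extra $\log n$ from the correlation estimate requires exactly the finer bookkeeping of what happens after the orbit re-enters $B=[1/2,1)$, which is the content of the paper's induced-map construction. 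The paper instead applies a \emph{maximal} Bernstein inequality (Lemma~\ref{Bernstein 1}) to the independent $\tau_B^m$-blocks, obtaining exponentially small deviation probabilities and thereby sidestepping the incompatibility between summability and lacunarity that defeats the Chebyshev route.
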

The proofs of these lemmas are given in Sections \ref{subsec: zero one laws} and \ref{subsec: truncated sum}.

As the last step in this section we give the proof of Theorem \ref{thm: if part}.
\begin{proof}[Proof of Theorem \ref{thm: if part}]
We set again $t_n= n\cdot\left(\log n\right)^{3/4}$ 
and note that $\epsilon\cdot n\cdot \log n> t_n$, for $n$ sufficiently large.
Then we can conclude from Lemma \ref{lem: number ak> betak io} and the definition of $\left(b_n\right)$ that for all $\epsilon>0$
\begin{align}
\lambda\left(S_n^{b_n}\geq T_n^{\epsilon\cdot n\cdot\log n}~\text{ i.o.}\right)=\lambda\left(S_n^{b_n}\geq T_n^{t_n}+\sum_{i=1}^{n}a_i\mathbbm{1}_{\left\{t_n\leq a_i\leq \epsilon\cdot n\cdot\log n\right\}}~\text{ i.o.}\right)=0.\label{eq: Snbn Tn sum io}
\end{align}
Since we have by \eqref{eq: E Tn tn} and Lemma \ref{lem: Tn tn a.s.} that
\begin{align*}
\lambda\left(T_n^{t_n}\geq \left(1+\epsilon\right)n\cdot\log n~\text{ i.o.}\right)=0,
\end{align*}
we can conclude from \eqref{eq: Snbn Tn sum io} and Lemma \ref{lem: sum ak> io} that
\begin{align}
\lambda\left(S_n^{b_n}\geq \left(1+4\epsilon\right) \cdot n\cdot\log n~\text{ i.o.}\right)=0.\label{eq: Snbn>1+3ps nlogn}
\end{align}

On the other hand we have for all $x\in[0,1)$ that
\begin{align*}
 S_n^{b_n}
 &=\sum_{k=1}^na_{\pi(k)}-\sum_{\ell=1}^{b_n}a_{\pi(\ell)}
 \geq\sum_{k=1}^n\left(a_{\pi(k)}\cdot\mathbbm{1}_{\{a_{\pi(k)}\leq t_n\}}\right)-\sum_{\ell=1}^{b_n}\left(a_{\pi(\ell)}\cdot\mathbbm{1}_{\{a_{\pi(\ell)}\leq t_n\}}\right)
 \geq T_n^{t_n}-b_n\cdot t_n
\end{align*}
and
\begin{align*}
\frac{b_n\cdot t_n}{\epsilon\cdot n\cdot \log n}=\frac{b_n}{\epsilon\cdot \left(\log n\right)^{1/4}}
\end{align*}
which tends to zero by \eqref{eq: bn o log n}.
Combining this with the statement of Lemma \ref{lem: Tn tn a.s.} yields for all $\epsilon>0$ that
\begin{align}
 \lambda\left(S_n^{b_n}\leq \left(1+\epsilon\right)\cdot n\log n~\text{ i.o.}\right)=0.\label{eq: Snbn<1+eps nlogn}
\end{align}

Combining \eqref{eq: Snbn>1+3ps nlogn} and \eqref{eq: Snbn<1+eps nlogn} gives the statement of the theorem.
\end{proof}

The rest of Section \ref{sec: proof if part} is structured as follows.
In Section \ref{subsec: dep tau B} we will introduce the induced transformation $\tau_B$ given in \eqref{eq: def tau B}.
Since the random variables $\left(a_n\right)$ highly depend on each other, it is difficult to prove statements directly.
The induced transformation will partly solve this problem as we will see in later sections.
The method to use the induced transformation is classical for piecewise expanding interval maps with 
an indifferent fixed point. 
It goes back to Kakutani and Rokhlin dealing with infinite measure preserving measures, see \cite{kakutani_induced_1943}
and \cite{rohlin_general_1948}.
However, it is also used in the finite measure case to prove limit results on the doubling map taking advantage of the independence structure, see \cite{Gouezel_stable_2008}.

With these techniques at hand we are able to prove Lemma \ref{lem: number ak> betak io} and Lemma \ref{lem: sum ak> io}
in Section \ref{subsec: zero one laws} and Lemma \ref{lem: Tn tn a.s.} in Section \ref{subsec: truncated sum}.

\subsection{Properties of the induced transformation \texorpdfstring{$\tau_B$}{ }}\label{subsec: dep tau B}
We start this section by defining the induced transformation or jump transformation $\tau_B$.
Let $B\coloneqq\left[1/2,1\right)$ and define the first exit time of $B$ by $\phi:\left[0,1\right)\to\mathbb{N}$ as
\begin{align*}
\phi\left(x\right)\coloneqq\inf\left\{ n\in\mathbb{N}_0\colon \tau^nx\in B\right\}+1.
\end{align*}
Furthermore, we define the jump transformation $\tau_B:\left[0,1\right)\to \left[0,1\right)$ by
\begin{align}
\tau_Bx\coloneqq\tau^{\phi\left(x\right)}x.\label{eq: def tau B}
\end{align}
Our strategy is to prove limit results for the sequence of random variables $(\chi\circ\tau_B^{n-1})_n$ instead of $(\chi\circ\tau^{n-1})_n$
and relate the limit results for the first to the latter random variables 
in the end of Sections \ref{subsec: zero one laws} and \ref{subsec: truncated sum}.

The reason for this approach is that the sequence  $\left(\varphi\circ \tau_B^{n-1}\right)_{n\in\N}$ 
is independent for the right choice of $\varphi$
as we will see in Lemma \ref{lem: indep of mth entry} and Corollary \ref{cor: indep of 1st entry}.

Our first lemma reads as follows.

\begin{lemma}\label{lem: lambda tauB invar}
$\tau_B$ is invariant with respect to $\lambda$.
\end{lemma}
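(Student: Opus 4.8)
The plan is to show that $\tau_B$ preserves $\lambda$ by exhibiting the partition of $[0,1)$ on which $\phi$ is constant, computing $\tau_B$ explicitly on each piece, and checking the change-of-variables/transfer-operator identity $\lambda(\tau_B^{-1}A)=\lambda(A)$ for all Borel $A$. Concretely, $\phi(x)=n$ exactly when $x\in[2^{-n},2^{-n+1})$ for $n\ge 1$ (since $\tau^k x = 2^k x \bmod 1$, and the first time $2^k x$ lands in $[1/2,1)$ is governed by the binary expansion of $x$: $\phi(x)=n$ iff the first $n-1$ binary digits of $x$ are $0,\dots,0$ and the $n$-th is $1$). On the piece $I_n\coloneqq[2^{-n},2^{-n+1})$ we have $\tau^{n-1}x = 2^{n-1}x \in[1/2,1)$, so $\tau_B x = \tau^n x = 2^n x - 1$, which is an affine bijection from $I_n$ onto $[0,1)$ with slope $2^n$.

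First I would record these two computations as the key structural facts. Then, for the invariance, I would use that the $I_n$ form a countable partition of $[0,1)$ (up to the point $0$, which is null) and write, for any Borel $A\subseteq[0,1)$,
\begin{align*}
\lambda(\tau_B^{-1}A)=\sum_{n=1}^\infty \lambda\bigl(I_n\cap\tau_B^{-1}A\bigr)=\sum_{n=1}^\infty \lambda\bigl((\tau_B|_{I_n})^{-1}A\bigr)=\sum_{n=1}^\infty \frac{\lambda(A)}{2^n}=\lambda(A),
\end{align*}
where the third equality uses that $\tau_B|_{I_n}\colon I_n\to[0,1)$ is affine with slope $2^n$, hence contracts Lebesgue measure of preimages by the factor $2^{-n}$, and the last equality is $\sum_{n\ge1}2^{-n}=1$. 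Since $\lambda$ is determined by its values on Borel sets, this gives $\tau_B{}_*\lambda=\lambda$.

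**The main obstacle**, such as it is, is purely bookkeeping: verifying that $\{\phi=n\}=I_n$ precisely, including the boundary conventions (what happens at dyadic rationals and at $x=0$), and making sure the "first exit" indexing in the definition of $\phi$ matches the claimed partition — in particular that the "$+1$" in $\phi(x)=\inf\{n\in\mathbb N_0:\tau^n x\in B\}+1$ is handled correctly so that $\tau_B x=\tau^{\phi(x)}x$ lands back in $[0,1)$ via the full branch. Once the partition and the affine formula for each branch are pinned down, the invariance computation is the short summation above; there is no analytic difficulty, only the need to be careful that the exceptional (measure-zero) set of points whose orbit never enters $B$ — here only $x=0$ — is genuinely null and can be ignored.
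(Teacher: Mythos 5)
Your proof is correct and takes essentially the same route as the paper: both identify the partition $\{\phi=n\}=[2^{-n},2^{-n+1})$ (which the paper denotes $J_{n-1}$), compute the affine branch $\tau_B x=2^{n}x-1$ of full range $[0,1)$, and conclude $\lambda$-invariance from the piecewise-affine, onto-each-branch structure. The only cosmetic difference is that the paper, after recording the affine formula, invokes a general black-box lemma for generalized L\"uroth maps (its Lemma \ref{lem: xi invariance}, cited from Barrionuevo--Burton--Dajani--Kraaikamp), whereas you carry out the change-of-variables summation $\sum_n 2^{-n}\lambda(A)=\lambda(A)$ directly; the underlying computation is identical.
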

It will be proven later in this section.

In order to state our next lemma we define the intervals
\begin{align}\label{eq: def Jji}
 J_{j,i}^m\coloneqq \left[1/2^j-(i+1)/2^{j+m},1/2^j-i/2^{j+m}\right)  
\end{align}
with $j\in\N_0$ and $i\in\left\{0,\ldots, 2^{m-1}-1\right\}$.
For given $m\in\N$
the intervals $(J_{j,i}^m)_{j,i}$ form a partition of $[0,1)$.
Further, denote by $\mathcal{J}^m$ the $\sigma$-algebra generated by $(J_{j,i}^m)_{j,i}$.

For simplicity we also define $\left(J_n\right)_{n\in\mathbb{N}}$ with 
$J_{n}=J_{n,0}^1=\left[1/2^{n+1},1/2^{n}\right)$, for all $n\in\N_0$
and $\mathcal{J}$ the $\sigma$-algebra generated by $\left(J_n\right)$.
Note that $J_0=\left[1/2,1\right)=B$.

Then our next lemma reads as follows.
\begin{lemma}\label{lem: indep of mth entry}
Let, for all $n\in\N$, $\nu_n\colon [0,1)\to\R$ be measurable with respect to $\mathcal{J}^m$. Then, for $u\in\mathbb{N}_0$,
the random variables $(\nu_n\circ\tau_B^{u+(n-1)m})_{n\in\N}$  are mutually independent with respect to $\lambda$. 
\end{lemma}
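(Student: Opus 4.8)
The plan is to understand explicitly how $\tau_B$ acts on the partition $(J_{j,i}^m)_{j,i}$ and to exhibit the sequence $(\tau_B^{u+(n-1)m})_n$ as a genuine Bernoulli-type coding, from which independence is immediate. First I would record the key structural fact about the jump transformation: if $x\in J_j=[1/2^{j+1},1/2^j)$ then $\phi(x)=j+1$ and $\tau_B x=\tau^{j+1}x$ maps $J_j$ affinely and bijectively onto $[0,1)$ (this is essentially the content of Lemma \ref{lem: lambda tauB invar} and its proof). Consequently $\tau_B$ expands each $J_j$ by the factor $2^{j+1}$, and more to the point, for any $m$ it maps each level-$m$ cell $J_{j,i}^m\subseteq J_j$ affinely onto a dyadic interval of length $2^{-m}$; in fact $\tau_B$ restricted to $J_j$ carries the partition $\{J_{j,i}^m : i\}$ onto the partition of $[0,1)$ into the $2^{m-1}$ intervals $[i/2^{m-1},(i+1)/2^{m-1})$. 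The precise identification of which cell goes where is a routine dyadic computation that I would not belabour, but the upshot is that the $\mathcal J^m$-partition refines cleanly under $\tau_B$.

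Next I would iterate this. The crucial point is that $\tau_B^m$ maps each atom of $\mathcal J^m$ affinely onto $[0,1)$: indeed $\tau_B$ takes $J_{j,i}^m$ onto a dyadic interval $I$ of length $2^{-(m-1)}$, then a further application of $\tau_B$ on each of the two $J$-halves of $I$ spreads things out, and after $m$ steps the image is all of $[0,1)$ with the original atom sent affinely onto it. Equivalently, the $\sigma$-algebra $\mathcal J^m$ and its preimages $\tau_B^{-m}\mathcal J^m,\ \tau_B^{-2m}\mathcal J^m,\dots$ behave exactly like the coordinate $\sigma$-algebras of an i.i.d.\ sequence: since $\lambda$ is $\tau_B$-invariant (Lemma \ref{lem: lambda tauB invar}) and each atom of $\mathcal J^m$ is sent onto $[0,1)$ with constant Jacobian by $\tau_B^m$, for any atoms $A_0,\dots,A_{k-1}$ of $\mathcal J^m$ one gets
\begin{align*}
\lambda\!\left(A_0\cap\tau_B^{-m}A_1\cap\cdots\cap\tau_B^{-(k-1)m}A_{k-1}\right)=\prod_{\ell=0}^{k-1}\lambda(A_\ell),
\end{align*}
by an easy induction on $k$ using the affinity/constant-Jacobian property at each stage. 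This says precisely that the $\sigma$-algebras $\tau_B^{-\ell m}\mathcal J^m$, $\ell\ge 0$, are mutually independent. Prepending $u$ applications of $\tau_B$ (which is measure preserving) shows the same for $\tau_B^{-(u+\ell m)}\mathcal J^m$, $\ell\ge 0$.

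Finally I would close the argument: by hypothesis $\nu_n$ is $\mathcal J^m$-measurable, so $\nu_n\circ\tau_B^{u+(n-1)m}$ is $\tau_B^{-(u+(n-1)m)}\mathcal J^m$-measurable, and a family of random variables each measurable with respect to one member of a mutually independent family of $\sigma$-algebras is mutually independent. I expect the main obstacle to be the bookkeeping in the first two paragraphs — pinning down exactly how the cells $J_{j,i}^m$ are permuted and stretched by $\tau_B$, and verifying cleanly that $\tau_B^m$ sends each $\mathcal J^m$-atom affinely onto $[0,1)$ with the right Jacobian, so that the product formula for the joint measure goes through by induction; everything after that is soft. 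Corollary \ref{cor: indep of 1st entry} should then follow by specializing to $m=1$, $\mathcal J^1=\mathcal J$.
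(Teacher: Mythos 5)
There is a genuine gap in the second paragraph. You assert that $\tau_B^m$ maps each atom of $\mathcal{J}^m$ affinely onto $[0,1)$, and the product formula you derive rests squarely on the resulting constant Jacobian. That assertion is false. Take $m=2$ and the atom $A=J_{0,1}^2=[1/2,3/4)$. Then $\tau_B A=[0,1/2)=\bigcup_{k\geq1}J_k$, so a second application of $\tau_B$ has slope $2^{k+1}$ on the preimage of $J_k$ inside $A$, for each $k\geq1$: the restriction $\tau_B^2\vert_A$ is piecewise affine with countably many distinct slopes and in particular is not affine. Your own phrase ``a further application of $\tau_B$ on each of the two $J$-halves of $I$'' already concedes the loss of affinity, and moreover the dyadic image $I$ typically meets infinitely many $J_k$'s, not two. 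Without a constant Jacobian the induction you propose does not close, so the claimed product formula is asserted but not established.

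The statement itself is still true, and the paper obtains it by a detour you have elided: it introduces the cylinder partition $(L_{i_1,\ldots,i_m})$ that records the first $m$ $\tau_B$-symbols, verifies via the explicit formula \eqref{eq: tauBm+1} that $\tau_B^m$ is a generalised L\"uroth map with respect to this finer partition (so it really is affine onto $[0,1)$ on each $L$-atom), invokes Lemma \ref{lem: indep of beta} for $\mathcal{L}=\sigma((L_{i_1,\ldots,i_m}))$, and finally observes $\mathcal{J}^m\subset\mathcal{L}$. If you want to avoid the auxiliary partition, your approach can be repaired by proving, by strong induction on $s$, that $\lambda(D\cap\tau_B^{-s}B)=2^{-s}\lambda(B)$ for every dyadic interval $D$ of length $2^{-s}$ and every measurable $B$, treating separately the case $D\subset J_j$ for some $j\leq s-1$ (where one application of $\tau_B$ is affine and reduces the scale) and the case $D=[0,2^{-s})=\bigcup_{j\geq s}J_j$ (where one sums over the $J_j$'s), and only then specialising $D$ to a $\mathcal{J}^m$-atom and $B$ to a finite intersection of pullbacks. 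The direct claim that $\tau_B^m$ is affine on $\mathcal{J}^m$-atoms cannot be salvaged.
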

The next corollary follows immediately from this lemma.
\begin{Cor}\label{cor: indep of 1st entry}
Let, for all $n\in\N$, $\nu_n\colon [0,1)\to\R$ be measurable with respect to $\mathcal{J}$. Then 
the random variables $(\nu_n\circ\tau_B^{(n-1)})_{n\in\N}$  are mutually independent with respect to $\lambda$.
\end{Cor}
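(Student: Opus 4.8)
The statement to prove is Corollary \ref{cor: indep of 1st entry}, which follows from Lemma \ref{lem: indep of mth entry}.

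\medskip

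The plan is to deduce the corollary from Lemma \ref{lem: indep of mth entry} by simply specializing the parameters. The key observation is that the partition $(J_n)_{n\in\N}$ is exactly the partition $(J_{j,i}^1)_{j,i}$ for the choice $m=1$: indeed, when $m=1$ the index $i$ ranges over $\{0,\ldots,2^{m-1}-1\}=\{0\}$, so the only intervals are $J_{j,0}^1 = [1/2^{j+1},1/2^j)$, which is precisely $J_j$ by definition. Consequently the $\sigma$-algebra $\mathcal{J}$ generated by $(J_n)$ coincides with the $\sigma$-algebra $\mathcal{J}^1$ generated by $(J_{j,i}^1)_{j,i}$. So if each $\nu_n$ is measurable with respect to $\mathcal{J}$, it is measurable with respect to $\mathcal{J}^1$.

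\medskip

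With this identification in hand, I would apply Lemma \ref{lem: indep of mth entry} with $m=1$ and $u=0$. The lemma then asserts that the random variables $(\nu_n\circ\tau_B^{0+(n-1)\cdot 1})_{n\in\N} = (\nu_n\circ\tau_B^{n-1})_{n\in\N}$ are mutually independent with respect to $\lambda$, which is exactly the conclusion of the corollary. There is essentially no obstacle here: the corollary is a direct instance of the lemma, and the only thing to verify carefully is the bookkeeping that $m=1$ forces $i=0$ and hence collapses the doubly-indexed partition to the singly-indexed one, together with $u=0$ and $(n-1)m = n-1$ making the time shifts line up. One could state this in a single sentence, but I would include the identification $\mathcal{J}=\mathcal{J}^1$ explicitly since it is the substantive content and makes the deduction transparent to the reader.
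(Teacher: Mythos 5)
Your proof is correct and matches the paper's approach: the paper simply notes that the corollary follows immediately from Lemma \ref{lem: indep of mth entry}, and you supply the routine verification that taking $m=1$ and $u=0$ collapses $\mathcal{J}^m$ to $\mathcal{J}$ and $\tau_B^{u+(n-1)m}$ to $\tau_B^{n-1}$.
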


\begin{Rem}\label{rem: Omega}
For technical reasons we also introduce $\Omega'\subset [0,1)$ as
\begin{align}
 \Omega'\coloneqq\left\{x\in [0,1)\colon x~\text{ does not have a finite binary expansion}\right\}.\label{eq: def Omega}
\end{align}
The points with a finite binary expansion are exceptional on the one hand as they are finally mapped to zero and $\phi(0)=\infty$.
On the other hand, we have for $x\in J_n\cap \Omega'$ that $\chi(x)\in [2^n,2^{n+1}-1)$ 
but $2^{-n-1}\in J_n$ and $\chi\left(2^{-n-1}\right)=2^{n+1}\notin[2^n,2^{n+1}-1)$.

Still, $\Omega'$ is of full measure and thus the above lemmas, Lemma \ref{lem: lambda tauB invar},
Lemma \ref{lem: indep of mth entry}, and Corollary \ref{cor: indep of 1st entry},
are still valid if we restrict ourselves to $\Omega'$ and the respective $\sigma$-algebras $\mathcal{J}\cap \Omega'$ and
$\mathcal{J}^m\cap \Omega'$. 

To clarify our calculations we will sometimes write $\lambda\lvert_{\Omega'}\left(A\right)=\lambda\left(\Omega'\cap A\right)$
even though $\lambda\lvert_{\Omega'}\left(A\right)=\lambda\left(A\right)$ holds for all $\lambda$-measurable sets $A$.
\exend
\end{Rem}

We will prove the previous lemmas by a general approach considering interval maps as in the following lemma.
\begin{lemma}\label{lem: xi invariance}
  Let $\left[0,1\right)$ be partitioned into $(W_i)_{i\in I}$ with $W_i=\left[c_i,d_i\right)$
  and $I$ a finite or countable index set 
  and let $\mathcal{W}$ be the $\sigma$-algebra generated by those intervals.
 Further, for all $i\in I$, let $\xi\colon \left[0,1\right)\to \left[0,1\right)$ be defined on $W_i$ by
 \begin{align}
  \xi\vert_{W_i}x\coloneqq-\frac{c_i}{d_i-c_i} +\frac{1}{d_i-c_i}\cdot x.\label{eq: rep indep of beta}
 \end{align}
 Then $\lambda$ is $\xi$-invariant.
\end{lemma}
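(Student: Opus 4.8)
The plan is to verify the invariance identity $\lambda(\xi^{-1}A)=\lambda(A)$ first on the $\pi$-system of half-open subintervals of $[0,1)$, and then to upgrade this to all Borel sets by a standard Dynkin-system argument. This is the natural route because a finite Borel measure on $[0,1)$ is determined by its values on intervals, and on intervals the computation is explicit.

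First I would fix $A=[a,b)$ with $0\le a\le b\le 1$ and write $\xi^{-1}(A)=\bigsqcup_{i\in I}\big(\xi^{-1}(A)\cap W_i\big)$, a disjoint union over the finite or countable index set $I$. On the piece $W_i=[c_i,d_i)$ the map $\xi$ coincides with the increasing affine bijection $W_i\to[0,1)$ from \eqref{eq: rep indep of beta}, so $\xi(x)\in[a,b)$ is equivalent, after solving the linear inequality $a\le (x-c_i)/(d_i-c_i)<b$, to $x\in\big[c_i+a(d_i-c_i),\,c_i+b(d_i-c_i)\big)$; since $0\le a\le b\le1$ this interval is genuinely contained in $W_i$ and has length $(b-a)(d_i-c_i)=(b-a)\lambda(W_i)$. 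Each such set is Borel (the preimage of a Borel set under the continuous restriction $\xi|_{W_i}$), so $\xi^{-1}(A)$ is Borel and countable additivity gives $\lambda(\xi^{-1}A)=\sum_{i\in I}(b-a)\lambda(W_i)=(b-a)\sum_{i\in I}\lambda(W_i)=(b-a)=\lambda(A)$, where we used that $(W_i)_{i\in I}$ partitions $[0,1)$ so that the lengths $d_i-c_i$ sum to $1$.

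Next I would let $\mathcal D\coloneqq\{A\in\mathcal B([0,1))\colon \lambda(\xi^{-1}A)=\lambda(A)\}$ and check that $\mathcal D$ is a Dynkin system: it contains $[0,1)$ (as $\xi$ maps $[0,1)$ into $[0,1)$, so $\xi^{-1}[0,1)=[0,1)$); it is closed under proper differences, using $\xi^{-1}(B\setminus A)=\xi^{-1}B\setminus\xi^{-1}A$ together with $\xi^{-1}A\subseteq\xi^{-1}B$ and the finiteness of $\lambda$; and it is closed under increasing countable unions, using that $\xi^{-1}$ commutes with unions and continuity of $\lambda$ from below. Since $\mathcal D$ contains the $\pi$-system of intervals $[a,b)$, which generates $\mathcal B([0,1))$, Dynkin's $\pi$--$\lambda$ theorem yields $\mathcal D=\mathcal B([0,1))$, i.e.\ $\lambda$ is $\xi$-invariant. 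I do not expect a real obstacle here; the only points needing a word of care are the measurability of $\xi$ and of the preimages $\xi^{-1}(A)\cap W_i$ (supplied by the piecewise-affine, hence piecewise-continuous, structure on the countable partition) and the bookkeeping that the lengths of the $W_i$ add up to $1$.
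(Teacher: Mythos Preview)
Your argument is correct: the explicit computation that $\xi^{-1}[a,b)\cap W_i$ is an interval of length $(b-a)\lambda(W_i)$, the summation over $i$, and the Dynkin extension are all sound, and your remarks on measurability are adequate.

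For comparison, the paper does not give its own proof of this lemma at all; it simply cites \cite[Theorem 1]{barrionuevo_ergodic_1996}, where the result is established for generalised L\"uroth maps. Your direct argument is the standard one and is entirely self-contained, which has the advantage of not relying on an external reference for such an elementary fact. The cited source proceeds in essentially the same spirit (checking invariance on a generating class), so there is no real methodological difference---you have simply written out what the paper outsources.
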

In other words the map $\xi$ maps each of the intervals $W_i$ to the full interval
and on each interval the function $\xi$ has a constant positive gradient. 
One example is the doubling map itself with the partition $[0,1/2)$ and $[1/2,1)$. 
We note here that these maps are generalised L\"uroth maps, 
in this generalised form first studied in \cite{barrionuevo_ergodic_1996}, 
but see also
\cite[Chapter 1.4.1]{kessebohmer_infinite_2016}, \cite[Chapter 2.2]{dajani_ergodic_2002}.
A proof of \ref{lem: xi invariance} is given in \cite[Theorem 1]{barrionuevo_ergodic_1996}.
(In some literature it is assumed that the partitioning intervals are ordered by size, but this assumption does not change the proof.)

Also note that the above definition implies only that $\xi$ is a.s.\ defined on $\left[0,1\right)$.
Having for example the partition into the intervals $\left[1/2^k,1/2^{k-1}\right)$ with $k\in\mathbb{N}$ gives 
$\bigcup_{k=1}^{\infty}\left[1/2^k,1/2^{k-1}\right)=\left(0,1\right)$.
For the following we will ignore the nullset of points which might not been defined.

%
Furthermore, the above defined maps have the following handy property:
\begin{lemma}\label{lem: indep of beta}
Let $\xi$ be given as in \eqref{eq: rep indep of beta} with the corresponding partition $(W_i)_{i\in I}$.
If, for all $n\in\N$, the map $\varphi_n\colon\left[0,1\right)\to \mathbb{R}$ is measurable with respect to $\mathcal{W}$,
then the random variables $(\varphi_n\circ \xi^{n-1})_{n\in\N}$ are mutually independent with respect to $\lambda$.
\end{lemma}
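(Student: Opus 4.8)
The plan is to reduce the statement to a single clean combinatorial fact about the map $\xi$: for any finite word $i_1 i_2 \cdots i_k$ over the index set $I$, the set of points $x$ whose $\xi$-itinerary begins $x\in W_{i_1}$, $\xi x\in W_{i_2}$, \ldots, $\xi^{k-1}x\in W_{i_k}$ is an interval of length $\prod_{j=1}^{k}\lambda(W_{i_j})$. Granting this, mutual independence of $(\varphi_n\circ\xi^{n-1})_{n\in\N}$ follows because each $\varphi_n$ is $\mathcal W$-measurable, hence a (countable) combination of indicators of the atoms $W_i$, so it suffices to check that for any $k$ and any choice of atoms $W_{i_1},\ldots,W_{i_k}$,
\begin{align*}
\lambda\!\left(\bigcap_{n=1}^{k}\xi^{-(n-1)}W_{i_n}\right)=\prod_{n=1}^{k}\lambda(W_{i_n})=\prod_{n=1}^{k}\lambda\!\left(\xi^{-(n-1)}W_{i_n}\right),
\end{align*}
where the last equality uses Lemma \ref{lem: xi invariance} (that $\lambda$ is $\xi$-invariant). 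A standard $\pi$--$\lambda$/monotone class argument then upgrades this from atoms to arbitrary $\mathcal W$-measurable functions, since the collection of cylinder sets is a $\pi$-system generating the product structure.

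First I would record the key geometric observation underlying the cylinder-length formula: by \eqref{eq: rep indep of beta}, $\xi$ restricted to $W_i=[c_i,d_i)$ is the orientation-preserving affine bijection onto $[0,1)$ with slope $1/(d_i-c_i)=1/\lambda(W_i)$. Consequently, for any Borel set $A\subseteq[0,1)$ one has $\lambda(W_i\cap\xi^{-1}A)=\lambda(W_i)\cdot\lambda(A)$; this is just the change-of-variables formula for an affine map and is the only property of $\xi$ we will use. I would then prove the cylinder formula by induction on $k$: the case $k=1$ is trivial, and for the inductive step write
\begin{align*}
\bigcap_{n=1}^{k}\xi^{-(n-1)}W_{i_n}=W_{i_1}\cap\xi^{-1}\!\left(\bigcap_{n=1}^{k-1}\xi^{-(n-1)}W_{i_{n+1}}\right),
\end{align*}
and apply the observation with $A=\bigcap_{n=1}^{k-1}\xi^{-(n-1)}W_{i_{n+1}}$, whose measure is $\prod_{n=2}^{k}\lambda(W_{i_n})$ by the induction hypothesis.

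With the cylinder identity in hand, the independence of the indicator random variables $(\mathbbm 1_{W_{i_n}}\circ\xi^{n-1})_{n}$ is immediate, and independence for general $\mathcal W$-measurable $\varphi_n$ follows by approximating each $\varphi_n$ by simple functions adapted to the partition $(W_i)_{i\in I}$ and passing to the limit (or, more cleanly, by noting that the $\sigma$-algebras $\sigma(\xi^{-(n-1)}\mathcal W)$, $n\in\N$, are mutually independent because the generating $\pi$-systems of cylinders factorize, then observing $\varphi_n\circ\xi^{n-1}$ is measurable with respect to the $n$-th of these). I do not expect a serious obstacle here: the content is entirely in the affine structure of $\xi$, and the only point requiring a little care is the measure-theoretic bookkeeping when $I$ is countably infinite (so that $\bigcup_i W_i$ may be only a full-measure subset of $[0,1)$, as flagged in the discussion after Lemma \ref{lem: xi invariance}); this is handled by working modulo the $\lambda$-null set of points with infinite itinerary issues, exactly as in Remark \ref{rem: Omega}. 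The mildly delicate step, if any, is making the $\pi$--$\lambda$ argument airtight for infinitely many coordinates, i.e.\ concluding mutual (not just pairwise or finite) independence of the full sequence from the finite-dimensional cylinder identities, which is the standard Dynkin-system argument applied on each finite sub-collection.
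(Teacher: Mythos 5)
Your proof is correct. The paper itself does not give a self-contained argument: it simply cites \cite[Lemma 1]{barrionuevo_ergodic_1996}, which establishes independence of the digit sequence $\big(\widetilde\varphi_n\circ\xi^{n-1}\big)_n$ for $\widetilde\varphi_n(x)=i$ when $x\in W_i$, and then remarks that the same argument goes through when $\widetilde\varphi_n$ is replaced by an arbitrary $\mathcal W$-measurable $\varphi_n$. What you have done is reconstruct that underlying argument from scratch: the affine form \eqref{eq: rep indep of beta} gives the local scaling identity $\lambda(W_i\cap\xi^{-1}A)=\lambda(W_i)\,\lambda(A)$, induction yields the cylinder factorization $\lambda\big(\bigcap_{n=1}^k\xi^{-(n-1)}W_{i_n}\big)=\prod_{n=1}^k\lambda(W_{i_n})$, and a $\pi$--$\lambda$ argument upgrades independence of the generating cylinders to mutual independence of the $\sigma$-algebras $\sigma(\xi^{-(n-1)}\mathcal W)$, which contain the respective $\varphi_n\circ\xi^{n-1}$. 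This is precisely the content behind the cited lemma, so your route is not substantively different -- but it does buy self-containment, and it makes transparent exactly which property of $\xi$ (the affine bijection of each $W_i$ onto $[0,1)$, hence constant Jacobian $1/\lambda(W_i)$) drives the result. Your caveats about the measure-zero set of undefined itineraries and about passing from finite- to infinite-dimensional independence via Dynkin systems are the right ones and are easily handled as you indicate.
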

\begin{proof}
\cite[Lemma 1]{barrionuevo_ergodic_1996} states that the random variables 
$\big(\widetilde{\varphi}_n\circ \xi^{n-1}\big)_{n\in\N}$
are mutually independent with respect to $\lambda$, where $\widetilde{\varphi}_n(x)=i$ if $i\in W_i$.

The proof remains the same if we replace $\widetilde{\varphi}_n$ by a sequence of more general mappings $(\varphi_n)$
which are measurable with respect to $\mathcal{W}$.
\end{proof}

With the above two lemmas at hand we are able to prove Lemma \ref{lem: lambda tauB invar} and Lemma \ref{lem: indep of mth entry}.
\begin{proof}[Proof of Lemma \ref{lem: lambda tauB invar}]
For $x\in J_n$, we only have to show the representation  
\begin{align}
 \tau_Bx
 =-\frac{2^{-n-1}}{2^{-n}-2^{-n-1}}+\frac{1}{2^{-n}-2^{-n-1}}\cdot x
 = -1+2^{n+1} x.\label{eq: rep tauB}
\end{align}
Applying Lemma \ref{lem: xi invariance} immediately gives the statement of Lemma \ref{lem: lambda tauB invar}.
We note here that $\phi\left(0\right)=\infty$ and $\tau_B0$ is not defined, i.e.\ $\tau_B$ is only almost surely defined on $\left[0,1\right)$.
On $B$ we have that $\tau x=2x\mod 1=-1+2x$.
Obviously, for $x\in B$ we have that $\phi\left(x\right)=1$ and thus 
$\tau_Bx=\tau x=-1+2x$.
In general, if $x\in J_n$, then $\phi\left(x\right)=n+1$, i.e.\ we have that $2^{n}\cdot x\in B$ and thus 
$\tau_Bx=\tau^{\phi\left(x\right)}x=\tau^{n+1}\left(x\right)=\tau \left(2^{n}x\right)=-1+2^{n+1}x$.
\end{proof}

\begin{proof}[Proof of Lemma \ref{lem: indep of mth entry}]
It is enough to show independence of $(\nu_n\circ\tau_B^{u+\left(n-1\right)m})^{-1}(\mathcal{A})$, $n\in\N$, 
where $\mathcal{A}$ is an $\cap$-stable set which generates $\mathcal{J}^m$, see for example 
\cite[Theorem 2.16]{klenke_probability_2007}, i.e.\ we might use  $\mathcal{A}=\left(J_{j,i}^m\right)_{j,i}$. 
If we define $A_{u,m,j,i}^n\coloneqq\{x\colon \tau_B^{u+\left(n-1\right)m} x\in J_{j,i}^m\}$, 
it is enough to prove independence for sequences of sets $(A_{u,m,v(n)}^n)_{n\in\N}$ for all possible functions $v\colon \mathbb{N}\to\{(j,i)\colon j\in\N_0, i\in\{0,\ldots, 2^{m-1}-1\}\}$.

Furthermore, we have by the $\tau_B$-invariance of $\lambda$, see Lemma \ref{lem: lambda tauB invar},
that 
\begin{align*}
 \lambda\left(A_{u,m,v(1)}^1\cap \ldots \cap A_{u,m,v(n)}^n\right)
 &= \lambda\left(\tau_B^{-u}\left(A_{0,m,v(1)}^1\cap \ldots \cap A_{0,m,v(n)}^n\right)\right)\\
 &=\lambda\left(A_{0,m,v(1)}^1\cap \ldots \cap A_{0,m,v(n)}^n\right).
\end{align*}
The last equation implies that we only have to prove independence of $(A_{0,m,v(n)}^n)_{n\in\N}$
or independence of $(\nu_n\circ\tau_B^{(n-1)m})_{n\in\N}$.

Our strategy is to apply Lemma \ref{lem: indep of beta} to the transformation $\tau_B^{m}$. 
For doing so we define for given $m\in\mathbb{N}$ an auxiliary partition of $\left[0,1\right)$ by the intervals
\begin{align*}
 L_{i_1,\ldots, i_m}\coloneqq\left[a_{i_1,\ldots,i_m}, b_{i_1,\ldots,i_m}\right)
\end{align*}
with
\begin{align*}
a_{i_1,\ldots,i_m}&\coloneqq\frac{1}{2^{i_1}}+\ldots+\frac{1}{2^{i_1+\ldots+i_{m-1}}}+\frac{1}{2^{i_1+\ldots+i_m}},\\
b_{i_1,\ldots,i_m}&\coloneqq\frac{1}{2^{i_1}}+\ldots+\frac{1}{2^{i_1+\ldots+i_{m-1}}}+\frac{1}{2^{i_1+\ldots+i_m-1}},
\end{align*}
and $i_1,\ldots, i_m\in\N$. 
Obviously, $L_{i_1}=J_{i_1-1}$
and $L_{i_1,\ldots, i_m}\subset L_{i_1,\ldots, i_{m-1}}$.
We aim to show that the partition $(L_{i_1,\ldots,i_m})$ is such that 
$\tau_B^{m}$ is a map of the form given in \eqref{eq: rep indep of beta}
with respect to this partition.
To show this, let $x\in L_{i_1,\ldots,i_m}$. Then we have that $x\in J_{i_1-1}$. 
Applying the representation in \eqref{eq: rep tauB} gives 
that $\tau_Bx\in L_{i_2,\ldots,i_m}$ and thus $\tau_Bx\in J_{i_2-1}$.
Hence, applying the representation in \eqref{eq: rep tauB} repeatedly and using
an induction argument shows, for $x\in L_{i_1,\ldots,i_m}$,
\begin{align}
 \tau_B^{m}x= -1-2^{i_m}-2^{i_{m}+i_{m-1}}-\ldots-2^{i_{m}+\ldots+i_2}+2^{i_1+\ldots+i_m}x.\label{eq: tauBm+1}
\end{align}

On the other hand we have that 
\begin{align*}
 \frac{1}{b_{i_1,\ldots,i_m}-a_{i_1,\ldots,i_m}}=2^{i_1+\ldots+i_m}
\end{align*}
and 
\begin{align*}
 -\frac{a_{i_1,\ldots,i_m}}{b_{i_1,\ldots,i_m}-a_{i_1,\ldots,i_m}}
 =-1-2^{i_m}-2^{i_{m}+i_{m-1}}-\ldots-2^{i_{m}+\ldots+i_1}.
\end{align*}
Hence, the representation in \eqref{eq: tauBm+1} coincides with the representation in \eqref{eq: rep indep of beta},
allowing us to apply Lemma \ref{lem: indep of beta}. This yields that the random variables $(\nu_n\circ \tau_B^{u+(n-1)m})_{n\in\N}$ are independent
if, for all $n\in\N$, $\nu_n$ is measurable with respect to the $\sigma$-algebra $\mathcal{L}$ generated by $(L_{i_1,\ldots,i_m})$.
Noting that $\mathcal{J}^m$ is a sub-$\sigma$-algebra of $\mathcal{L}$ gives the statement of the lemma.
\end{proof}

\subsection{Zero-one laws concerning the number of large entries \texorpdfstring{$a_n$}{an}}\label{subsec: zero one laws}
In this section we will prove Lemma \ref{lem: number ak> betak io} and Lemma \ref{lem: sum ak> io}.
We will start with a set of definitions and lemmas relevant for the proof of these lemmas.
For the following we set
\begin{align*}
\beta_n\left(x\right)&\coloneqq \chi\circ \tau_B^{n-1}\left(x\right)
\end{align*}
and
\begin{equation}
 \phi_n\left(x\right)\coloneqq\sum_{k=0}^{n-1}\phi\circ \tau_B^k.\label{eq: def phi n}
\end{equation}
Then
the following lemma gives the relation between $\left(a_n\right)$ and $\left(\beta_n\right)$.
\begin{lemma}\label{lem: rel beta a}
 We have that $a_1=\beta_1$ and,
 for $k\in\mathbb{N}_{\geq 2}$, that 
 \begin{align*}
  \beta_k=a_{\phi_{k-1}\left(x\right)+1}.
 \end{align*}
\end{lemma}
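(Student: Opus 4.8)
The plan is to reduce the whole statement to a single pointwise identity relating iterates of the jump transformation $\tau_B$ to iterates of $\tau$, namely
\begin{equation*}
 \tau_B^{j}(x)=\tau^{\phi_{j}(x)}(x)\qquad\text{for all }j\in\N\text{ and }x\in\Omega',\tag{$\ast$}
\end{equation*}
and then to apply $\chi$ to both sides. I work throughout on the full-measure set $\Omega'$ of Remark \ref{rem: Omega}, so that $\phi$, every iterate $\tau_B^{j}$, and every composition $\phi\circ\tau_B^{j}$ is genuinely defined; note also that $\phi\geq 1$ everywhere, so all exponents of $\tau$ occurring below are nonnegative integers and the semigroup law $\tau^{a}\circ\tau^{b}=\tau^{a+b}$ applies without caveat.

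The identity $a_1=\beta_1$ is immediate: $\beta_1=\chi\circ\tau_B^{0}=\chi=\chi\circ\tau^{0}=a_1$. This is also the base case $j=1$ of $(\ast)$, since $\phi_1=\phi\circ\tau_B^{0}=\phi$ by \eqref{eq: def phi n}, and hence $\tau_B^{1}=\tau_B=\tau^{\phi}=\tau^{\phi_1}$ by the definition \eqref{eq: def tau B} of $\tau_B$.

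For the inductive step, assume $(\ast)$ for $j-1$ with $j\geq 2$. Using $\tau_B=\tau^{\phi}$, the inductive hypothesis, and the semigroup law,
\begin{align*}
 \tau_B^{j}(x)
 =\tau_B\big(\tau_B^{j-1}(x)\big)
 =\tau^{\phi(\tau_B^{j-1}(x))}\big(\tau_B^{j-1}(x)\big)
 =\tau^{\phi(\tau_B^{j-1}(x))}\big(\tau^{\phi_{j-1}(x)}(x)\big)
 =\tau^{\,\phi_{j-1}(x)+\phi\circ\tau_B^{j-1}(x)}(x),
\end{align*}
and by \eqref{eq: def phi n} the exponent equals $\sum_{k=0}^{j-1}\phi\circ\tau_B^{k}(x)=\phi_{j}(x)$, which is $(\ast)$ for $j$.

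It remains to apply $\chi$. For $k\geq 2$, taking $j=k-1$ in $(\ast)$ gives $\tau_B^{k-1}(x)=\tau^{\phi_{k-1}(x)}(x)$, so that
\begin{align*}
 \beta_k(x)=\chi\circ\tau_B^{k-1}(x)=\chi\big(\tau^{\phi_{k-1}(x)}(x)\big)=a_{\phi_{k-1}(x)+1}(x),
\end{align*}
where the last equality is the definition $a_i=\chi\circ\tau^{i-1}$ evaluated at the index $i=\phi_{k-1}(x)+1$, which depends on $x$. I do not expect a genuine obstacle here: the only delicate points are the bookkeeping that confines us to $\Omega'$ and keeping straight that the conclusion is a pointwise statement in which the index of $a$ itself varies with $x$.
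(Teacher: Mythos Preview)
Your proof is correct and follows essentially the same route as the paper: both arguments establish the pointwise identity $\tau_B^{j}=\tau^{\phi_{j}}$ by induction (using $\tau_B=\tau^{\phi}$ and the recursion $\phi_{j}=\phi_{j-1}+\phi\circ\tau_B^{j-1}$) and then apply $\chi$ to obtain $\beta_k=a_{\phi_{k-1}+1}$. Your write-up is slightly more careful about the restriction to $\Omega'$ and about the dependence of the index on $x$, but the mathematics is identical.
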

\begin{proof}
The first statement follows immediately from the definition of $(a_n)$ and $(\beta_n)$.

By definition $\tau_B=\tau^{\phi(x)}$.
Obviously, $\beta_2=\chi\circ \tau_B=\chi\circ\tau^{\phi(x)}=a_{\phi(x)+1}=a_{\phi_1(x)+1}$.
Using an induction argument
assuming $\tau_B^{k-2}=\tau^{\phi_{k-2}}$
gives then
\begin{align*}
 \tau_B^{k-1}(x)
 =\tau^{\phi\left(\tau_B^{k-2}\left(x\right)\right)}\circ\tau_B^{k-2}\left(x\right)
 =\tau^{\phi\left(\tau_B^{k-2}\left(x\right)\right)}\circ\tau^{\phi_{k-2}}\left(x\right)
 =\tau^{\phi_{k-1}\left(x\right)}\left(x\right)
\end{align*}
and thus
\begin{align*}
 \beta_k\left(x\right)
 &=\chi\circ \tau_B^{k-1}(x)
 =\chi\circ \tau^{\phi_{k-1}\left(x\right)}\left(x\right)
 =a_{\phi_{k-1}\left(x\right)+1}.
\end{align*}
\end{proof}

The following two lemmas, Lemma \ref{lem: Xi> io 0} and Lemma \ref{lem: Xi> 2x io 0} give zero-one laws for large entries $\beta_i$.
\begin{lemma}\label{lem: Xi> io 0}
We have for all $\psi\in\Psi$ that
\begin{align*}
\lambda\left(\#\left\{i\leq n\colon \beta_i\geq n\cdot \psi\left(\left\lfloor \log n\right\rfloor\right)\right\}\geq 1~\text{ i.o.}\right)=0.
\end{align*}
\end{lemma}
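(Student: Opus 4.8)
The plan is to use the first Borel--Cantelli lemma together with the independence of the $\beta_i$. By Corollary \ref{cor: indep of 1st entry} the random variables $\beta_i = \chi\circ\tau_B^{i-1}$ are mutually independent, since $\chi$ restricted to $\Omega'$ is measurable with respect to $\mathcal J$ (on each $J_n$ the value $\chi$ ranges over a block of integers, but this block is $\mathcal J$-measurable data; more carefully, one works on $\Omega'$ as in Remark \ref{rem: Omega} and notes $\chi\cdot\mathbbm 1_{\Omega'}$ is $\mathcal J\cap\Omega'$-measurable since $\{x\in J_n\cap\Omega'\colon \chi(x)\geq r\}$ is a union of cylinder pieces refining $J_n$). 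From the distribution function $F$ in \eqref{eq: F dist func} we have $\lambda(\beta_i \geq r) = 1/(1+\lfloor r\rfloor) \leq 1/r$ for $r\geq 1$, so for each fixed $n$,
\begin{align*}
\lambda\left(\#\left\{i\leq n\colon \beta_i\geq n\cdot\psi(\lfloor\log n\rfloor)\right\}\geq 1\right)
\leq \sum_{i=1}^{n}\lambda\left(\beta_i\geq n\cdot\psi(\lfloor\log n\rfloor)\right)
\leq \frac{n}{n\cdot\psi(\lfloor\log n\rfloor)}
= \frac{1}{\psi(\lfloor\log n\rfloor)}.
\end{align*}
(Here I do not even need independence — a union bound suffices for the "i.o." direction; independence is only needed for the matching lower bounds elsewhere.)

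The remaining point is that $\sum_{n=1}^\infty 1/\psi(\lfloor\log n\rfloor)$ converges. Since $\psi\in\Psi$ we know $\sum_{m=1}^\infty 1/\psi(m)<\infty$, and the number of $n$ with $\lfloor\log n\rfloor = m$ is at most $e^{m+1}-e^{m} = e^m(e-1)$, which grows, so a naive grouping gives $\sum_n 1/\psi(\lfloor\log n\rfloor) \approx \sum_m e^m/\psi(m)$, which need not converge. So I would instead replace $\psi(\lfloor\log n\rfloor)$ in the threshold by a stronger quantity, or — more likely what the author intends — I would re-read the threshold: the event involves $\beta_i \geq n\psi(\lfloor\log n\rfloor)$ for \emph{all} $i\leq n$ simultaneously over a dyadic block of $n$'s, and one should bound along a subsequence $n = 2^k$ and control the excursion within each block. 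Concretely, for $n\in[2^k, 2^{k+1})$ one has $\beta_i \geq n\psi(\lfloor\log n\rfloor) \geq 2^k\psi(k-1)$ forces $\max_{i\leq 2^{k+1}}\beta_i \geq 2^k\psi(k-1)$, and
\begin{align*}
\sum_{k}\lambda\left(\max_{i\leq 2^{k+1}}\beta_i \geq 2^k\psi(k-1)\right)
\leq \sum_k \frac{2^{k+1}}{2^k\psi(k-1)}
= 2\sum_k\frac{1}{\psi(k-1)} < \infty,
\end{align*}
and then Borel--Cantelli plus monotonicity in $n$ across each dyadic block gives the claim.

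The main obstacle I anticipate is precisely this interplay between the index range $i\leq n$ (of size $n$) and the threshold $n\psi(\lfloor\log n\rfloor)$: the union bound over $i$ eats a factor $n$, which cancels the $n$ in the threshold but leaves $1/\psi(\lfloor\log n\rfloor)$, and to get a summable series one must pass to a dyadic subsequence and exploit that $\beta_i \geq n\psi(\lfloor\log n\rfloor)$ is monotone-friendly in $n$ (the threshold grows, the index set grows, so one bounds the whole block $[2^k,2^{k+1})$ by a single event at $k$). Verifying that this block bound still yields a convergent series reduces exactly to $\psi\in\Psi$, with the extra $\log$ in the argument of $\psi$ being absorbed because we sum over $k\sim\log n$ rather than over $n$. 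Independence itself is not the hard part here — it is the bookkeeping of which scale ($n$ versus $\log n$) the Borel--Cantelli series is indexed by.
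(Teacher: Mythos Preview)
Your overall strategy---union bound over $i\leq n$, dyadic blocking in $n$, then first Borel--Cantelli over the block index $k$---is exactly the paper's approach. The gap is in the displayed inequality
\[
n\,\psi(\lfloor\log n\rfloor)\;\geq\;2^{k}\,\psi(k-1)\qquad\text{for }n\in[2^{k},2^{k+1}),
\]
which is not justified. Two separate problems: (i) for $n\in[2^{k},2^{k+1})$ one has $\lfloor\log n\rfloor\in\{\lfloor k\log 2\rfloor,\lfloor k\log 2\rfloor+1\}$, which is roughly $0.69\,k$, not $k-1$; (ii) even after correcting the argument, $\psi\in\Psi$ carries no monotonicity assumption, so you cannot infer $\psi(\lfloor\log n\rfloor)\geq\psi(\text{anything})$ from an inequality between the arguments. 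As written, the block estimate fails.

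The paper handles precisely this point with the auxiliary Lemma~\ref{lem: log gamma log tilde gamma}: given $\psi\in\Psi$ it constructs $\widetilde\psi\in\Psi$ with $\widetilde\psi(\lfloor\log_2 n\rfloor)\leq\psi(\lfloor\log n\rfloor)$ for all $n$, which simultaneously absorbs the change of base $\log\to\log_2$ and the lack of monotonicity. One then bounds the block event by $\{\max_{i\leq 2^{k+1}}\beta_i\geq 2^{\lfloor k+\log_2\widetilde\psi(k)\rfloor}\}$, whose probability is at most $4/\widetilde\psi(k)$, summable in $k$. Equivalently, you could salvage your argument by setting $\rho(k)\coloneqq\min\{\psi(m):m=\lfloor k\log 2\rfloor\text{ or }\lfloor k\log 2\rfloor+1\}$, bounding the block event by $\{\max_{i\leq 2^{k+1}}\beta_i\geq 2^{k}\rho(k)\}$, and then checking that $\sum_k 1/\rho(k)<\infty$ because each value of $m$ is hit by at most $\lceil 1/\log 2\rceil+1$ values of $k$. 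That bookkeeping is exactly what Lemma~\ref{lem: log gamma log tilde gamma} packages.

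A side remark: your opening paragraph asserts that $\chi$ is $\mathcal J$-measurable on $\Omega'$, but this is false---on $J_n\cap\Omega'$ the observable $\chi$ takes all integer values in $[2^{n},2^{n+1}-1]$. You correctly note later that independence is not needed here, so this does no damage; just drop that claim.
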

In order to prove this lemma we will start with a technical lemma.

\begin{lemma}\label{lem: log gamma log tilde gamma}
Let $\psi\in\Psi$. Then there exists $\omega\in\Psi$ such that
\begin{align}
\omega\left(\left\lfloor \log_2 n\right\rfloor \right)\leq \psi\left(\left\lfloor \log n\right\rfloor\right).\label{phi psi a b}
\end{align}
\end{lemma}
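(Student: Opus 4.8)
The statement we need is: given $\psi\in\Psi$, i.e.\ $\sum_n 1/\psi(n)<\infty$, construct $\omega\in\Psi$ with $\omega(\lfloor\log_2 n\rfloor)\le\psi(\lfloor\log n\rfloor)$ for all $n$ (or all large $n$). The point is purely a change-of-base bookkeeping issue: $\lfloor\log n\rfloor$ and $\lfloor\log_2 n\rfloor$ differ by a multiplicative constant $\log_2 e=1/\log 2$, so the two "clocks" $\lfloor\log n\rfloor$ and $\lfloor\log_2 n\rfloor$ run through the naturals at rates differing by that constant factor, and we must repackage $\psi$ accordingly without losing summability of $\sum 1/\omega$.

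The plan is as follows. First I would set $c\coloneqq 1/\log 2>1$ and record the elementary inequality relating the two floors: for every $n\in\N$ (large enough) one has $\lfloor\log_2 n\rfloor\le c\cdot\lfloor\log n\rfloor + c$, equivalently $\lfloor\log n\rfloor\ge \log 2\cdot\lfloor\log_2 n\rfloor - 1$, since $\log n=\log 2\cdot\log_2 n$. Thus if $m=\lfloor\log_2 n\rfloor$ then $\lfloor\log n\rfloor\ge \lfloor (\log 2)\, m\rfloor - 1=:g(m)$, where $g(m)\to\infty$ and $g$ is non-decreasing. It is then natural to define
\begin{align*}
\omega(m)\coloneqq \psi\bigl(g(m)\bigr),
\end{align*}
so that by monotonicity of $\psi$ — wait, $\psi$ need not be monotone. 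So instead I would first replace $\psi$ by a \emph{non-increasing minorant} that still lies in $\Psi$: set $\bar\psi(k)\coloneqq\min_{j\le k}\psi(j)$. Then $\bar\psi\le\psi$, $\bar\psi$ is non-increasing, and crucially $\sum_k 1/\bar\psi(k)$ is still finite — this is the one genuinely non-trivial point and I will come back to it. Granting it, define $\omega(m)\coloneqq\bar\psi(g(m))$. Then $\omega(\lfloor\log_2 n\rfloor)=\bar\psi(g(\lfloor\log_2 n\rfloor))\le\bar\psi(\lfloor\log n\rfloor)\le\psi(\lfloor\log n\rfloor)$, using $g(\lfloor\log_2 n\rfloor)\le\lfloor\log n\rfloor$ and that $\bar\psi$ is non-increasing; this gives \eqref{phi psi a b}. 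It remains to check $\omega\in\Psi$, i.e.\ $\sum_m 1/\bar\psi(g(m))<\infty$: since $g(m)\ge (\log 2)m-2$, the preimage $g^{-1}(\{k\})$ has at most $\lceil c\rceil+1$ elements for each $k$, so $\sum_m 1/\bar\psi(g(m))\le(\lceil c\rceil+1)\sum_k 1/\bar\psi(k)<\infty$.

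The main obstacle is the claim that the non-increasing minorant $\bar\psi$ of an element of $\Psi$ is again in $\Psi$, since blindly taking running minima could in principle make the sum of reciprocals diverge. The fix is standard: one does \emph{not} need the full running minimum but only a minorant that is non-increasing, tends to infinity, and keeps summability. For instance, let $n_1<n_2<\cdots$ be chosen with $\sum_{k\ge n_j}1/\psi(k)<2^{-j}$ (possible since the tail sums vanish), and on the block $[n_j,n_{j+1})$ define $\omega$ to be a constant value comparable to $\min_{n_j\le k<n_{j+1}}\psi(k)$ but staged so the blocks only increase; a cleaner route is to take $\bar\psi(k)=\min_{j\ge k}\psi(j)$ directly and argue summability by a telescoping/blocking argument, grouping the indices $k$ on which $\bar\psi$ is constant and bounding the contribution of each such block by the reciprocal of the $\psi$-value realizing that minimum, which occurs at the right endpoint of the block — so $\sum 1/\bar\psi(k)$ is dominated by a sub-series of $\sum 1/\psi(j)$ times the (uniformly bounded) block lengths. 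Once this auxiliary fact is in place, the rest is the routine change-of-base estimate sketched above.
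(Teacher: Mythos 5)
Your reduction of the problem to a change-of-base bookkeeping argument is on the right track, but the central device you propose --- replacing $\psi$ by a \emph{monotone} minorant that is still in $\Psi$ --- does not exist in general, and the gap is fatal rather than cosmetic.

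First a sign issue: if $\bar\psi$ is non-increasing and $g(\lfloor\log_2 n\rfloor)\le\lfloor\log n\rfloor$, then $\bar\psi(g(\lfloor\log_2 n\rfloor))\ge\bar\psi(\lfloor\log n\rfloor)$, the wrong direction; you would need $\bar\psi$ non-decreasing for that step. But set that aside, as neither direction works. A non-increasing minorant $\bar\psi\le\psi$ satisfies $\bar\psi(k)\le\bar\psi(1)\le\psi(1)<\infty$ for all $k$, so $\sum_k 1/\bar\psi(k)=\infty$; no such $\bar\psi$ is ever in $\Psi$. A non-decreasing minorant in $\Psi$ also need not exist: take $\psi(n)=n^2$ for $n$ not a power of $2$ and $\psi(2^j)=j^2$. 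Then $\psi\in\Psi$, but any non-decreasing $\bar\psi\le\psi$ must satisfy $\bar\psi(k)\le\psi(2^j)=j^2$ for all $k\le 2^j$, hence $\bar\psi(k)\lesssim(\log_2 k)^2$, and $\sum 1/\bar\psi(k)=\infty$. This same example defeats the suggested blocking argument: the blocks on which the ``future minimum'' $\min_{j\ge k}\psi(j)$ is constant have length $\asymp 2^{j-1}$, which is \emph{not} uniformly bounded, precisely the point where you wave your hands.

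The lemma does not actually need global monotonicity. For a fixed $m=\lfloor\log_2 n\rfloor$, the value $\lfloor\log n\rfloor$ ranges over a set of at most two integers, namely $\lfloor m\log 2\rfloor$ and possibly $\lfloor m\log 2\rfloor+1$, because $(m+1)\log 2 - m\log 2=\log 2<1$. The paper therefore simply defines
\begin{align*}
\omega(m)\coloneqq\min\bigl\{\psi\bigl(\lfloor m\log 2\rfloor + j\bigr)\colon j\in\{0,1\}\bigr\},
\end{align*}
which is automatically $\le\psi(\lfloor\log n\rfloor)$ when $m=\lfloor\log_2 n\rfloor$. Membership in $\Psi$ follows from two elementary closure properties: $n\mapsto\psi(\lfloor\kappa n\rfloor)$ has fibers of size at most $\lceil 1/\kappa\rceil+1$, so the reciprocal sum is finite, and $n\mapsto\min\{\psi(n),\psi(n+1)\}$ has reciprocal sum at most twice that of $\psi$. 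You should replace the monotone-minorant step by this local minimum over the (boundedly many) relevant arguments; the rest of your floor-function estimates then go through as intended.
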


\begin{proof}
We define $\omega:\mathbb{N}\rightarrow\mathbb{R}_{>0}$ as
\begin{align}
\omega\left(n\right)=\min\left\{ \psi\left(\left\lfloor n\cdot \log 2\right\rfloor +j\right)\colon j\in\left\{ 0,1\right\} \right\}.\label{omega n}
\end{align}
Recall that $\psi\in\Psi$. Then for the functions
$\overline{\psi}:\mathbb{N}\to\mathbb{R}_{>0}$ and $\widetilde{\psi}:\mathbb{N}\to\mathbb{R}_{>0}$ given by $\overline{\psi}\left(n\right)=\psi\left(\left\lfloor \kappa\cdot n\right\rfloor \right)$
with $\kappa>0$ and $\widetilde{\psi}\left(n\right)=\min\left\{ \psi\left(n\right),\psi\left(n+1\right)\right\} $
it holds that $\widetilde{\psi},\overline{\psi}\in\Psi$.
Hence, $\omega\in\Psi$.
Applying $\left\lfloor \log_2 n\right\rfloor $ on $\omega$ in \eqref{omega n} yields
\begin{align*}
\omega\left(\left\lfloor \log_2 n\right\rfloor \right)=\min\left\{ \psi\left(\left\lfloor \left\lfloor \frac{\log n}{\log 2}\right\rfloor\cdot\log 2 \right\rfloor +j\right)\colon j\in\left\{ 0, 1 \right\} \right\} .
\end{align*}
Since on the one hand we have 
\begin{align}
\left\lfloor \left\lfloor \frac{\log n}{\log 2}\right\rfloor\cdot\log 2 \right\rfloor
\geq\left\lfloor\left(\frac{\log n}{\log 2}-1\right)\cdot\log 2 \right\rfloor
\geq \left\lfloor \log n\right\rfloor -1\label{eq: omega estim 1}
\end{align}
and on the other hand
\begin{align}
\left\lfloor \left\lfloor \frac{\log n}{\log 2}\right\rfloor\cdot\log 2 \right\rfloor
\leq \left\lfloor  \frac{\log n}{\log 2} \cdot\log 2 \right\rfloor
=\left\lfloor \log n \right\rfloor,\label{eq: omega estim 2}
\end{align}
we have that 
\begin{align*}
 \min\left\{ \psi\left(\left\lfloor \left\lfloor \frac{\log n}{\log 2}\right\rfloor\cdot\log 2 \right\rfloor +j\right)\colon j\in\left\{ 0,1 \right\} \right\}
 \leq \psi\left(\left\lfloor\log n\right\rfloor\right)
\end{align*}
and \eqref{phi psi a b} follows.
\end{proof}

\begin{proof}[Proof of Lemma \ref{lem: Xi> io 0}]
Let $\psi\in\Psi$ be given.
By Lemma \ref{lem: log gamma log tilde gamma} there exists $\widetilde{\psi}\in\Psi$ such that 
$\widetilde{\psi}\left(\left\lfloor \log_2 m\right\rfloor\right)\leq \psi\left(\left\lfloor\log m\right\rfloor\right)$, for all $m\in\mathbb{N}$. 
Let for the following 
$\widetilde{\psi}$ fulfill this inequality.
Since $\lambda$ is $\tau_B$-invariant, see Lemma \ref{lem: lambda tauB invar}, we have, using the distribution function in \eqref{eq: F dist func}, 
for all $i,k\in\N$,
\begin{align*}
\lambda\left(\beta_i\geq 2^{\left\lfloor k+\log_2\widetilde{\psi}\left(k\right)\right\rfloor}\right)
=\lambda\left(\chi\geq 2^{\left\lfloor k+\log_2\widetilde{\psi}\left(k\right)\right\rfloor}\right)
=\frac{1}{2^{\left\lfloor k+\log_2\widetilde{\psi}\left(k\right)\right\rfloor}}
<2^{-k+1}\cdot\widetilde{\psi}\left(k\right).
\end{align*}
Next we notice that 
\begin{align*}
 \lambda\left(\#\left\{i\leq 2^{k+1}\colon\beta_i\geq 2^{\left\lfloor k+\log_2\widetilde{\psi}\left(k\right)\right\rfloor}\right\}\geq 1\right)
 &\leq \sum_{i=1}^{2^{k+1}}\lambda\left(\beta_i\geq 2^{\left\lfloor k+\log_2\widetilde{\psi}\left(k\right)\right\rfloor}\right)
 <\frac{4}{\widetilde{\psi}\left(k\right)}.
\end{align*}
Since $\widetilde{\psi}\in\Psi$ this implies
\begin{align*}
\sum_{k=1}^{\infty}\lambda\left(\#\left\{i\leq 2^{k+1}\colon\beta_i\geq 2^{\left\lfloor k+\log_2\widetilde{\psi}\left(k\right)\right\rfloor}\right\}\geq 1\right)<\infty
\end{align*}
and applying the first Borel-Cantelli lemma yields
\begin{align*}
\lambda\left(\#\left\{i\leq 2^{k+1}\colon\beta_i\geq 2^{\left\lfloor k+\log_2\widetilde{\psi}\left(k\right)\right\rfloor}\right\}\geq 1~\text{ i.o.}\right)=0.
\end{align*}
If we define the sequence of sets $\left(I_k\right)_{k\in\N}$ as 
\begin{align}
 I_k\coloneqq\left[2^{k},2^{k+1}-1\right]\cap \mathbb{N}, \label{eq: Kk}
\end{align}
then we have for every $n\in I_{k}$ that
\begin{align}
\lambda\left(\#\left\{i\leq n\colon\beta_i\geq 2^{\left\lfloor k+\log_2\widetilde{\psi}\left(k\right)\right\rfloor}\right\}\geq 1~\text{ i.o.}\right)=0.\label{eq: estim betai 2k}
\end{align}
On the other hand, if $n\in I_{k}$, we obtain by our choice of $\widetilde{\psi}$ that
\begin{align*}
2^{\left\lfloor k+\log_2\widetilde{\psi}\left(k\right)\right\rfloor}
&\leq 2^{ k+\log_2\widetilde{\psi}\left(k\right)}
\leq n\cdot\widetilde{\psi}\left(\left\lfloor \log_2 n\right\rfloor\right)
\leq n\cdot\psi\left(\left\lfloor\log n\right\rfloor\right).
\end{align*}
Applying this estimation on \eqref{eq: estim betai 2k} yields the statement of the lemma.
\end{proof}

\begin{lemma}\label{lem: Xi> 2x io 0}
For $t_n=n\cdot \left(\log n\right)^{3/4}$ we have that
\begin{align*}
\lambda\left(\#\left\{i\leq n\colon \beta_i\geq t_n\right\}\geq 2~\text{ i.o.}\right)=0.
\end{align*}
\end{lemma}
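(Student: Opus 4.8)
The plan is to run the same scheme as in the proof of Lemma~\ref{lem: Xi> io 0}: a first Borel--Cantelli argument along the dyadic blocks $I_k=[2^k,2^{k+1}-1]\cap\N$ from \eqref{eq: Kk}, using that the $\beta_i$ become \emph{independent} once the thresholds are rounded to powers of $2$, via Corollary~\ref{cor: indep of 1st entry} (together with the $\Omega'$ caveat of Remark~\ref{rem: Omega}).

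First I would use monotonicity to pass from $n$ to the left endpoint of its block. Since $t_n=n(\log n)^{3/4}$ is increasing and $i\le n$ with $n\in I_k$ forces $i\le 2^{k+1}$, we get
\[
\left\{i\le n\colon \beta_i\ge t_n\right\}\subseteq\left\{i\le 2^{k+1}\colon \beta_i\ge t_{2^k}\right\}\qquad\text{for }n\in I_k.
\]
The function $\chi$ is not constant on the intervals $J_\ell$, so $\mathbbm{1}_{\{\chi\ge t_{2^k}\}}$ need not be $\mathcal J$-measurable and Corollary~\ref{cor: indep of 1st entry} cannot be applied to it directly. To fix this I replace $t_{2^k}$ by $s_k\coloneqq 2^{\lfloor\log_2 t_{2^k}\rfloor}$, the largest power of $2$ not exceeding $t_{2^k}$. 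On $\Omega'$ one has $\{\chi\ge s_k\}=\bigcup_{\ell\ge\lfloor\log_2 t_{2^k}\rfloor}J_\ell$, hence $\mathbbm{1}_{\{\chi\ge s_k\}}$ is measurable with respect to $\mathcal J\cap\Omega'$, and Corollary~\ref{cor: indep of 1st entry} then yields that the indicators $\mathbbm{1}_{\{\beta_i\ge s_k\}}=\mathbbm{1}_{\{\chi\ge s_k\}}\circ\tau_B^{i-1}$, $i\in\N$, are independent with respect to $\lambda$. Moreover $\lambda(\chi\ge s_k)=2^{-\lfloor\log_2 t_{2^k}\rfloor}<2/t_{2^k}$ by \eqref{eq: F dist func}.

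Next I would estimate, by a union bound over pairs $1\le i<j\le 2^{k+1}$ and the independence just obtained,
\[
\lambda\left(\#\left\{i\le 2^{k+1}\colon \beta_i\ge s_k\right\}\ge 2\right)\le\binom{2^{k+1}}{2}\lambda\left(\chi\ge s_k\right)^2<2^{2k+1}\cdot\frac{4}{t_{2^k}^2}=\frac{2^{2k+3}}{2^{2k}(k\log 2)^{3/2}}=\frac{8}{(\log 2)^{3/2}}\cdot\frac{1}{k^{3/2}},
\]
where I used $t_{2^k}=2^k(k\log 2)^{3/4}$. Since $\sum_k k^{-3/2}<\infty$, the first Borel--Cantelli lemma gives
\[
\lambda\left(\#\left\{i\le 2^{k+1}\colon \beta_i\ge s_k\right\}\ge 2~\text{ i.o.\ in }k\right)=0.
\]
Finally, because $\{i\le n\colon\beta_i\ge t_n\}\subseteq\{i\le 2^{k+1}\colon\beta_i\ge t_{2^k}\}\subseteq\{i\le 2^{k+1}\colon\beta_i\ge s_k\}$ for $n\in I_k$, and $k=k(n)=\lfloor\log_2 n\rfloor\to\infty$ as $n\to\infty$, the displayed null set controls the event in the lemma, which proves the claim.

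The only genuine subtlety --- the step to watch --- is the passage to the dyadic threshold $s_k$: $\chi$ only becomes $\mathcal J$-measurable after restricting to $\Omega'$ \emph{and} rounding the cutoff to a power of $2$, and this is precisely what licenses the use of Corollary~\ref{cor: indep of 1st entry}. Everything else is routine: the monotonicity reduction in $n$, the union bound over pairs, and the convergence of $\sum k^{-3/2}$, the latter being where the exponent $3/4>1/2$ in the definition of $t_n$ is actually needed.
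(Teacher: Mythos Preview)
Your proof is correct and follows the same scheme as the paper: dyadic blocking along $I_k$, rounding the threshold to a power of $2$ so that Corollary~\ref{cor: indep of 1st entry} applies on $\Omega'$, and a first Borel--Cantelli argument using the summability of $k^{-3/2}$. The only difference is cosmetic: you bound $\lambda(\#\{\cdot\}\ge 2)$ by a union bound over pairs, $\binom{2^{k+1}}{2}\lambda(\chi\ge s_k)^2$, whereas the paper writes out $\sum_{\ell\ge 2}\lambda(\#\{\cdot\}=\ell)$ and sums the resulting geometric series --- both lead to the same $O(k^{-3/2})$ estimate.
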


\begin{proof}[Proof of Lemma \ref{lem: Xi> 2x io 0}]
For $n\in I_{k}$ with $I_k$ as in \eqref{eq: Kk} we have 
\begin{align*}
t_n
=n\cdot \left(\log n\right)^{3/4}
\geq 2^k\cdot\left(\log 2^k\right)^{3/4}
=2^{k+3/4\cdot \log_2\left(\log 2\cdot k\right)}
\geq 2^{k+\left\lfloor3/4\cdot \log_2\left(\log 2\cdot k\right)\right\rfloor}.
\end{align*}
This implies 
\begin{align*}
 \bigcup_{n\in I_k}\#\left\{i\leq n\colon \beta_i\geq t_n\right\}
 &\subset \#\left\{i\leq 2^{k+1}\colon \beta_i\geq 2^{k+\left\lfloor3/4\cdot \log_2\left(\log 2\cdot k\right)\right\rfloor}\right\}.
\end{align*}

For the following we will restrict our space to $\Omega'$ defined in \eqref{eq: def Omega}.
Our strategy is to consider, for $k\in\N$,
\begin{align}
 \lambda\left(\bigcup_{n\in I_k}\#\left\{i\leq n\colon \beta_i\geq t_n\right\}\geq 2\right)
 &\leq \lambda\lvert_{\Omega'}\left(\#\left\{i\leq 2^{k+1}\colon \beta_i\geq 2^{k+\left\lfloor3/4\cdot \log_2\left(\log 2\cdot k\right)\right\rfloor}\right\}\geq 2\right)\notag\\
 &=\sum_{\ell=2}^{2^{k+1}}\lambda\lvert_{\Omega'}\left(\#\left\{i\leq 2^{k+1}\colon \beta_i\geq 2^{k+\left\lfloor3/4\cdot \log_2\left(\log 2\cdot k\right)\right\rfloor}\right\}=\ell\right)\label{eq: estim beta geq 2}
\end{align}
and to calculate the summands independently. 
We have, for each $i\in\N$, $q\in \N_0$,  $\{\beta_i\geq 2^{q}\}=\{\mathbbm{1}_{[0,2^{-q}]}\circ \tau_B^{i-1}=1\}$.
Clearly, $\mathbbm{1}_{[0,2^{-q}]}$ is $\mathcal{J}\cap\Omega'$-measurable and thus, 
by Corollary \ref{cor: indep of 1st entry}
and Remark \ref{rem: Omega}
the events $\{\beta_i\geq 2^{q}\}_{i\in\N}$ are independent. 
Since, by Lemma \ref{lem: lambda tauB invar}, $\lambda$ is additionally $\tau_B$-invariant, we obtain for these summands
\begin{align*}
\lambda\lvert_{\Omega'}\left(\#\left\{i\leq 2^{k+1}\colon \beta_i\geq 2^{k+\left\lfloor3/4\cdot \log_2\left(\log 2\cdot k\right)\right\rfloor}\right\}= \ell\right)
&\leq \binom{2^{k+1}}{\ell}\cdot \lambda\left(\chi\geq 2^{k+\left\lfloor3/4\cdot \log_2\left(\log 2\cdot k\right)\right\rfloor}\right)^{\ell}.
\end{align*}
Using the distribution function of $\chi$ given in \eqref{eq: F dist func} gives
\begin{align*}
\lambda\lvert_{\Omega'}\left(\#\left\{i\leq 2^{k+1}\colon \beta_i\geq 2^{k+\left\lfloor3/4\cdot \log_2\left(\log 2\cdot k\right)\right\rfloor}\right\}= \ell\right)
&< 2^{(k+1)\ell}\cdot \left(\frac{1}{2^{k+\left\lfloor3/4\cdot \log_2\left(\log 2\cdot k\right)\right\rfloor}}\right)^{\ell}\\
&< 2^{(k+1)\ell}\cdot \left(\frac{2}{2^{k+3/4\cdot \log_2\left(\log 2\cdot k\right)-1}}\right)^{\ell}\\
&=\left(\frac{8}{\left(\log 2\cdot k\right)^{3/4}}\right)^{\ell}.
\end{align*}
Hence, applying this on \eqref{eq: estim beta geq 2} and using the geometric series formula implies
\begin{align*}
 \lambda\left(\bigcup_{n\in I_k}\#\left\{i\leq n\colon \beta_i\geq t_n\right\}\geq 2\right)
 &\leq \sum_{\ell=2}^{2^{k+1}}\left(\frac{8}{\left(\log 2\cdot k\right)^{3/4}}\right)^{\ell}
 < \sum_{\ell=2}^{\infty}\left(\frac{8}{\left(\log 2\cdot k\right)^{3/4}}\right)^{\ell}\\
 &=\left(\frac{8}{\left(\log 2\cdot k\right)^{3/4}}\right)^{2}\cdot\left(1-\frac{8}{\left(\log 2\cdot k\right)^{3/4}}\right)^{-1}.
\end{align*}
We have that 
$1-8/\left(\log 2\cdot k\right)^{3/4}\geq 1/2$ if $k$ is sufficiently large. 
This implies 
\begin{align*}
 \lambda\left(\bigcup_{n\in I_k}\#\left\{i\leq n\colon \beta_i\geq t_n\right\}\geq 2\right)
 < \frac{64}{\left(\log 2\cdot k\right)^{3/2}},
\end{align*}
for $k\in\N$ sufficiently large,
which implies 
\begin{align*}
\sum_{k=1}^{\infty}\lambda\left(\bigcup_{n\in I_k}\#\left\{i\leq n\colon \beta_i\geq t_n\right\}\geq 2\right)<\infty.
\end{align*}
Applying the first Borel-Cantelli lemma yields
\begin{align*}
\lambda\left(\bigcup_{n\in I_k}\#\left\{i\leq n\colon \beta_i\geq t_n\right\}\geq 2~\text{ i.o.}\right)=0.
\end{align*}
Noting that $(I_k)$ is a partition of the natural numbers gives the statement of the lemma.
\end{proof}

As a last step before we can start with the proof of the main lemmas we need the following technical lemma.
\begin{lemma}\label{lem: ai+k}
 Assume, for some $i\in\N$, $x\in\Omega'$, that $a_i(x)=r\geq 2$. Then we have for all $j\in \N_{\leq \lfloor \log_2 r\rfloor}$ that
 $a_{i+j}(x)=\left\lfloor r/2^j \right\rfloor$.
\end{lemma}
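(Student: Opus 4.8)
The plan is to reduce everything to elementary arithmetic for the single real number $y\coloneqq\tau^{i-1}x$. First I would note that the hypothesis $a_i(x)=r$ means exactly $\chi(y)=\lfloor 1/y\rfloor=r$, i.e.\ $1/(r+1)<y\le 1/r$, and that $y\in\Omega'$ as well, since $\Omega'$ is forward invariant under $\tau$ (if $\tau z$ had a terminating binary expansion, then so would $z$). The only role of the assumption $x\in\Omega'$ is the boundary case $r=2^{j}$: there $1/r=2^{-j}$ \emph{does} have a finite binary expansion, so $y\neq 1/r$ and hence $y<1/r$ strictly.

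The key step is the claim that $2^{j}y<1$ for every $j$ with $0\le j\le\lfloor\log_2 r\rfloor$. Indeed $2^{j}\le r$ by the choice of $j$; if $2^{j}<r$ then $2^{j}y\le 2^{j}/r<1$, and if $2^{j}=r$ then the remark above gives $2^{j}y<2^{j}/r=1$. Since $2^{\ell}y\le 2^{j}y<1$ for all $\ell\le j$, no reduction modulo $1$ occurs in the first $\lfloor\log_2 r\rfloor$ iterates, so a one-line induction gives $\tau^{j}y=2^{j}y$ for all such $j$. Hence $a_{i+j}(x)=\chi(\tau^{i+j-1}x)=\chi(\tau^{j}y)=\lfloor 1/(2^{j}y)\rfloor$, and in particular $\chi$ is applied to a point in $(0,1)$, so this quantity is a genuine natural number.

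It remains to identify $\lfloor 1/(2^{j}y)\rfloor$ with $\lfloor r/2^{j}\rfloor$. Writing $r=2^{j}q+s$ with $q=\lfloor r/2^{j}\rfloor$ and $0\le s\le 2^{j}-1$, dividing the bounds $r\le 1/y<r+1$ by $2^{j}$ yields $q\le r/2^{j}\le 1/(2^{j}y)<(r+1)/2^{j}=q+(s+1)/2^{j}\le q+1$, whence the floor equals $q=\lfloor r/2^{j}\rfloor$. I do not expect a genuine obstacle here; the two points to be careful about are not to forget the $r=2^{j}$ boundary case (which is precisely where $x\in\Omega'$ enters) and to use the strict inequality $1/y<r+1$ coming from $\lfloor 1/y\rfloor=r$ in order to get the strict upper bound $1/(2^{j}y)<q+1$.
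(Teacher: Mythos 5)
Your proof is correct and follows the same route as the paper: show that no reduction modulo $1$ occurs for the first $\lfloor\log_2 r\rfloor$ iterates (so $\tau^j y = 2^j y$), then compute $\chi(2^j y)$ by dividing the bounds $r\le 1/y<r+1$ by $2^j$. If anything, your final floor computation is cleaner than the paper's, which writes $a_{i+j}\in\left[\left\lfloor r/2^j\right\rfloor,\left\lfloor(r+1)/2^j\right\rfloor\right)$ (an interval that is literally empty when the two floors coincide) before appealing to $\left\lfloor(r+1)/2^j\right\rfloor-\left\lfloor r/2^j\right\rfloor\le 1$; your step $q\le 1/(2^j y)<q+1$ avoids that awkward phrasing. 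You are also more explicit than the paper about where the hypothesis $x\in\Omega'$ is actually used (the boundary case $r=2^j$) and why it propagates forward under $\tau$, which is a genuine gap in the paper's wording rather than in its idea.
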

\begin{proof}
 The statement $a_i(x)=r$ is equivalent to $\chi\circ \tau^{i-1}(x)=r$. 
 The definition of $\chi$ implies  $\tau^{i-1}(x)\in (1/(r+1), 1/r]$ 
 and $\tau^{i-1}(x)\in J_{\lfloor \log_2 r\rfloor-1}$ (taking into account that $x$ we restrict the space to $\Omega'$).
 If $j\leq \lfloor \log_2 r\rfloor$,
 then $\tau^j\circ\tau^{i-1}(x)= 2^j\cdot \tau^{i-1}(x)$, see the proof of Lemma \ref{lem: lambda tauB invar}. 
 Hence,
 \begin{align*}
  a_{i+j}=\chi\left(2^j\cdot \tau^{i-1}(x)\right)
  \in\left[\left\lfloor\frac{r}{2^j}\right\rfloor,\left\lfloor\frac{r+1}{2^j}\right\rfloor\right).
 \end{align*}
 Since $\left\lfloor(r+1)/2^j\right\rfloor-\left\lfloor r/2^j \right\rfloor\leq 1$ and $a_{i+j}$ can only attain 
 natural values we have $a_{i+j}=\left\lfloor r/2^j \right\rfloor$.
\end{proof}

Finally, we prove the two main lemmas of this section. 
\begin{proof}[Proof of Lemma \ref{lem: number ak> betak io}]
For ease of notation set $i(1)\coloneqq 1$ and $i(k)\coloneqq i(k,x)\coloneqq \phi_{k-1}(x)+1$, for all $k\in\N_{\geq 2}$.
Lemma \ref{lem: rel beta a} implies $\beta_{k}(x) = a_{i(k)}(x)$.
Since $\phi(x)\geq 1$, we have that $i(k)\geq k$ implying 
\begin{align}
 \#\left\{k\leq n\colon a_k>\epsilon\cdot n\cdot\log n\right\}
&\leq \#\left\{k\leq i(n)\colon a_k>\epsilon\cdot n\cdot\log n\right\}.\label{eq: kleq n kleq phin}
\end{align}
If we set 
$Y_{k,n} \coloneqq \sum_{j = 0}^{i(k+1) - i(k) - 1} \mathbbm{1}_{\{a_{i(k) + j}> \epsilon \cdot n \cdot \log n\}}$,
then \eqref{eq: kleq n kleq phin} implies 
\begin{align}
 \#\left\{k\leq n\colon a_k>\epsilon\cdot n\cdot\log n\right\}
 &\leq \sum_{\ell=1}^n Y_{k,n}\label{eq: number k = sum Ykn}
\end{align}
and by Lemma \ref{lem: Xi> io 0} we have eventually almost surely, for each $k\leq n$, $\overline{\psi}\in\Psi$,
\begin{align}
 Y_{k,n} &=\sum_{j = 0}^{i(k+1) - i(k) - 1} \mathbbm{1}_{\{\epsilon \cdot n \cdot \log n< a_{i(k) + j}\leq n\cdot\overline{\psi}\left(\left\lfloor\log n\right\rfloor\right)\}}.\label{eq: Ykn= sum}
\end{align}

Let us restrict ourselves again to $\Omega'$ given in \eqref{eq: def Omega}.
If, on $\Omega'$, $a_n=r$ with $r\geq 2$, then by Lemma \ref{lem: ai+k}
\begin{align*}
a_{i+\left\lfloor\log_2 r/\ell\right\rfloor+1}
\leq \frac{r}{2^{\left\lfloor\log_2(r/\ell)\right\rfloor+1}}
 < \frac{r}{2^{\log_2(r/\ell)}}
 \leq \ell.
\end{align*}
Setting $r=n\cdot\overline{\psi}\left(\left\lfloor \log n\right\rfloor\right)$ and $\ell=\epsilon\cdot n\cdot\log n$
and applying this on \eqref{eq: Ykn= sum} yields, for all $k\leq n$,
\begin{align}
 Y_{k,n}\leq \left\lfloor\log_2\frac{n\cdot\overline{\psi}\left(\left\lfloor \log n\right\rfloor\right)}{\epsilon\cdot n\cdot\log n}\right\rfloor\label{eq: Ykn leq}
\end{align}
eventually almost surely.

Furthermore, applying Lemma \ref{lem: Xi> 2x io 0} and noting that
$t_n=n\cdot \left(\log n\right)^{3/4}<\epsilon n \log n$, for $n$ sufficiently large, yields that 
eventually almost surely at most one summand on the righthand side of \eqref{eq: number k = sum Ykn} can be non-zero. 
Combining this with \eqref{eq: Ykn leq} 
yields for all $\overline{\psi}\in\Psi$
\begin{align*}
  \lambda\left(\#\left\{k\leq n\colon a_k>\epsilon\cdot n\cdot\log n\right\}>\left\lfloor\log_2\frac{n\cdot\overline{\psi}\left(\left\lfloor \log n\right\rfloor\right)}{\epsilon\cdot n\cdot\log n}\right\rfloor~\text{ i.o.}\right)=0.
\end{align*}
If we set $\overline{\psi}\left(n\right)\coloneqq\psi(n)/\epsilon$ for given $\psi\in\Psi$,
we obtain $\overline{\psi}\in\Psi$ and
\begin{align}
 \left\lfloor\log_2\frac{n\cdot\overline{\psi}\left(\left\lfloor \log n\right\rfloor\right)}{\epsilon\cdot n\cdot\log n}\right\rfloor
 &=\left\lfloor \frac{\log\psi\left(\left\lfloor\log n\right\rfloor\right)-\log\log n}{\log 2}\right\rfloor.\label{eq: log 2 phi}
\end{align}
Combining this consideration with \eqref{eq: log 2 phi} gives the statement of the lemma.
\end{proof}

\begin{proof}[Proof of Lemma \ref{lem: sum ak> io}]
We use the same notation of $i$ introduced at the beginning of the proof of Lemma \ref{lem: number ak> betak io}
giving $\beta_k(x) = a_{i(k)}(x)$.

Since $i$ is strictly increasing we have in particular $i(k) \geq k$ and thus we have on $\Omega'$ that
\begin{align}
\sum_{k=1}^n a_k \cdot\mathbbm{1}_{\{t_n\leq a_k\leq \epsilon \cdot n \cdot \log n\}}
  &\leq \sum_{k=1}^{i(n)} a_k \cdot\mathbbm{1}_{\{t_n\leq a_k\leq \epsilon \cdot n \cdot \log n\}}\notag\\
  &= \sum_{k=1}^n \sum_{j = 0}^{i(k+1) - i(k) - 1} a_{i(k) + j}\cdot\mathbbm{1}_{\{t_n\leq a_{i(k)+j}\leq \epsilon \cdot n \cdot \log n\}}
  = \sum_{k=1}^n Z_{k,n}.\label{eq: sum ai < sum Z}
\end{align}

Furthermore, for $j\in[0, i(k+1) - i(k))\cap\N$ and $x\in\Omega'$ we obtain from Lemma \ref{lem: ai+k}
\[
a_{i(k) + j}(x) = \lfloor a_{i(k)}(x)/2^j \rfloor = \lfloor \beta_k(x)/2^j \rfloor.
\]

Let $1 \leq k \leq n$ and let $\ell \in \mathbb{N}$ be minimally chosen such that $\lfloor \beta_k/2^\ell \rfloor \leq \epsilon \cdot n \cdot \log n$.
Then we have on $\Omega'$
\begin{align*}
Z_{k,n} 
 &= \sum_{j = 0}^{i(k+1) - i(k) - 1} a_{i(k) + j}\cdot\mathbbm{1}_{\{t_n\leq a_{i(k) + j}\leq \epsilon \cdot n \cdot \log n\}} 
 = \sum_{j = 0}^{i(k+1) - i(k) - 1} \left\lfloor \frac{\beta_k}{2^j} \right\rfloor
    \cdot\mathbbm{1}_{\{t_n\leq \lfloor\beta_k/2^j\rfloor\leq \epsilon \cdot n \cdot \log n\}} \\
 &= \sum_{j = \ell}^{i(k+1) - i(k) - 1} \left\lfloor \frac{\beta_k}{2^j}\right\rfloor
    \cdot\mathbbm{1}_{\{t_n\leq \lfloor\beta_k/2^j\rfloor\leq \epsilon \cdot n \cdot \log n\}} 
 \leq \sum_{j = \ell}^{i(k+1) - i(k) - 1} \left\lfloor \frac{\beta_k}{2^j} \right\rfloor\\
 &< \sum_{j = \ell}^{\infty} \frac{\beta_k}{2^j}
 = \beta_k\cdot 2^{-\ell+1}.
\end{align*}
By the choice of $\ell$ we have that 
\begin{align}
 Z_{k,n}\leq 2\left(\epsilon \cdot n \cdot \log n+1\right)\leq 3\epsilon \cdot n \cdot \log n,\label{eq: Zin leq}
\end{align}
for $n$ sufficiently large.

Furthermore, Lemma \ref{lem: Xi> 2x io 0} implies that eventually almost surely $\#\{k\leq n\colon Z_{k,n}>0\}\leq 1$.
Combining this with \eqref{eq: sum ai < sum Z} and \eqref{eq: Zin leq} gives the statement of the lemma.
\end{proof}

\subsection{Limit results for the truncated sum \texorpdfstring{$T_n^{r}$}{ }}\label{subsec: truncated sum}
This section is devoted to the proof of Lemma \ref{lem: Tn tn a.s.}.
For the following we define $\eta\colon\left[0,1\right)\to \mathbb{R}_{\geq 0}$ and its truncated version $\eta^{r}$, for $r\geq 1$, by
\begin{align*}
 \eta\left(x\right)&\coloneqq\sum_{k=0}^{\phi\left(x\right)-1}\chi\circ \tau^k\left(x\right)\,\,\,\text{ and }\,\,\,
 \eta^{r}\coloneqq\sum_{k=0}^{\phi\left(x\right)-1}\chi^{r}\circ \tau^k\left(x\right).
\end{align*} 
Furthermore, we define for $m,j\in\N$, and $i\in\{0,\ldots,2^{j-1}-1\}$
\begin{align}
 y_{j,i}&\coloneqq y_{j,i}(m)\coloneqq\frac{2^{j+m+1}}{2^m-i-1}\,\,\,\text{ and }
 \,\,\,
 z_{j,i}\coloneqq z_{j,i}(m)\coloneqq\frac{2^{j+m+1}}{2^m-i}-j-1.\label{eq: def y z}
\end{align}
Further, we define the observables $v_m,w_m\colon [0,1)\to\mathbb{R}_{>0}$ 
as well as their truncated versions $v_{m}^{r},w_m^{r}\colon [0,1)\to\mathbb{R}_{>0}$ for $r\geq 1$ 
as functions piecewise constant on $J_{j,i}^m$ such that for $x\in J_{j,i}$
\begin{align}
 v_m\left(x\right)&=y_{j,i},&
 w_m\left(x\right)&=z_{j,i},\notag\\
 v_m^{r}\left(x\right)&=y_{\min\{j,\left\lfloor\log_2r\right\rfloor\},i},&
 w_m^{r}\left(x\right)&=z_{\min\{j,\left\lfloor\log_2r\right\rfloor-1\},i}.\label{eq: def wmr vmr}
 \end{align}
Those observables obey the $\sigma$-algebra $\mathcal{J}^m$ defined in \eqref{eq: def Jji} and thereafter.
Since, for given $u\in\mathbb{N}_0$, each of the sequences of random variables $(\nu\circ \tau_B^{m\cdot (n-1)+u})_{n}$
with $\nu\in\{v_m,w_m,v_m^r,w_m^r\}$
are independent, see Lemma \ref{lem: indep of mth entry}, it is easier to prove statements for those sequences of random variables
instead of $\left(\eta\circ \tau_B^{n-1}\right)_{n}$, see the proof of Lemma \ref{lem: Sn* allg}.

In the next lemmas, Lemma \ref{lem: rel w eta ell}, Lemma \ref{lem: EW vm}, and Lemma \ref{lem: EW wm},
we will work out the relation between the observables $v_m^{r}$, $w_m^{r}$, and $\eta^r$,
showing that the first two give an approximation of the latter one.

With these results at hand we are able to prove the subsequent Lemma \ref{lem: Sn* allg}, 
an analogous statement to Lemma \ref{lem: Tn tn a.s.} for the Birkhoff sum $\sum_{k=0}^{n-1}\eta^{2 t_n}\circ \tau^{k}$
instead of $\sum_{k=0}^{n-1}\chi^{t_n}\circ \tau^{k}$.

In the last part of this section we will then relate the limiting results for the Birkhoff sums $\sum_{k=0}^{n-1}\eta^{2 t_n}\circ \tau^{k}$
and $\sum_{k=0}^{n-1}\chi^{2 t_n}\circ \tau^{k}$ proving Lemma \ref{lem: Tn tn a.s.}.
\begin{lemma}\label{lem: rel w eta ell}
We have for all $x\in\Omega'$, all $m\in\mathbb{N}$, and all $r\geq 1$ that
\begin{align}
 w_m^{r}\left(x\right)\leq \eta^{r}\left(x\right)\leq v_m^{r}\left(x\right).\label{eq: rel w eta}
\end{align}
\end{lemma}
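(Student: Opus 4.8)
The plan is to analyze the structure of $\eta^r$ on each atom $J_{j,i}^m$ of the partition generating $\mathcal J^m$ and compare it term-by-term with the constants $y_{j,i}$ and $z_{j,i}$. Fix $x\in\Omega'$ lying in $J_{j,i}^m$, so $x$ does not have a finite binary expansion and in particular $\phi(x)<\infty$. I first want to understand $\eta(x)=\sum_{k=0}^{\phi(x)-1}\chi\circ\tau^k(x)$: since $x\in[1/2^j-(i+1)/2^{j+m},\,1/2^j-i/2^{j+m})$, we have $x\in J_j=[1/2^{j+1},1/2^j)$ (for $j\ge 1$), so $\phi(x)=j+1$ by the computation in the proof of Lemma~\ref{lem: lambda tauB invar}, and on this atom $\tau^k x=2^k x$ for $0\le k\le j$. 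Hence $\chi\circ\tau^k(x)=\lfloor 1/(2^k x)\rfloor$ for each such $k$, and
\[
\eta(x)=\sum_{k=0}^{j}\left\lfloor\frac{1}{2^k x}\right\rfloor .
\]
Now I would bound this sum between two geometric-type sums. Using $t-1<\lfloor t\rfloor\le t$ for each term, $\sum_{k=0}^{j}\lfloor 1/(2^kx)\rfloor$ lies between $\frac1x\sum_{k=0}^j 2^{-k} - (j+1)=\frac{2}{x}(1-2^{-j-1})-(j+1)$ and $\frac1x\sum_{k=0}^j 2^{-k}=\frac 2x(1-2^{-j-1})$. Plugging in the two endpoints of the interval $J_{j,i}^m$ for $1/x$ — namely $1/x\le 2^{j+m}/(2^m-i-1)$ for the upper bound and $1/x> 2^{j+m}/(2^m-i)$ for the lower bound — and simplifying $2(1-2^{-j-1})\cdot 2^{j+m}=2^{j+m+1}-2^m\le 2^{j+m+1}$ should reproduce exactly $v_m(x)=y_{j,i}=2^{j+m+1}/(2^m-i-1)$ as an upper bound for $\eta(x)$ and $w_m(x)=z_{j,i}=2^{j+m+1}/(2^m-i)-j-1$ as a lower bound; this is just a matter of carrying the elementary estimates carefully, noting that dropping the $-2^m$ term only loosens the bounds in the correct directions.

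Next I would pass from $\eta$ to the truncated $\eta^r$. By definition $\eta^r(x)=\sum_{k=0}^{j}\chi^r\circ\tau^k(x)=\sum_{k=0}^j\lfloor 1/(2^kx)\rfloor\,\mathbbm 1_{\{\lfloor 1/(2^kx)\rfloor\le r\}}$. By Lemma~\ref{lem: ai+k} the values $\chi\circ\tau^k(x)=\lfloor\beta_1(x)/2^k\rfloor$ are nonincreasing in $k$, so the truncation simply discards an initial block $k=0,\dots,k_0-1$ where $\lfloor 1/(2^kx)\rfloor>r$, leaving $\eta^r(x)=\sum_{k=k_0}^{j}\lfloor 1/(2^kx)\rfloor$ where $2^{k_0}$ is, up to the usual floor issues, of order $1/(xr)$. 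The truncated observables $v_m^r$ and $w_m^r$ are designed precisely so that replacing $j$ by $\min\{j,\lfloor\log_2 r\rfloor\}$ (resp. $\min\{j,\lfloor\log_2 r\rfloor-1\}$) effects exactly this cutoff of the leading terms: on $J_{j,i}^m$, the surviving index $k$ for which $\lfloor 1/(2^kx)\rfloor\le r$ begins near $\lfloor\log_2 r\rfloor - j$ counting from the top, which after reindexing corresponds to evaluating the same geometric estimate but with the exponent capped at $\lfloor\log_2 r\rfloor$. I would therefore repeat the two-sided geometric estimate of the previous paragraph on the truncated sum, obtaining $z_{\min\{j,\lfloor\log_2 r\rfloor-1\},i}\le\eta^r(x)\le y_{\min\{j,\lfloor\log_2 r\rfloor\},i}$, which is exactly \eqref{eq: rel w eta}. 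The case $j=0$ (where $\phi(x)=1$, so $\eta(x)=\chi(x)$) and the degenerate cases $r<2$ should be checked separately but are immediate.

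The main obstacle I anticipate is bookkeeping the off-by-one phenomena: the floor functions inside $\chi$, the floors $\lfloor\log_2 r\rfloor$ in the truncation level, and the asymmetry in the definitions ($\min\{j,\lfloor\log_2 r\rfloor\}$ for $v$ versus $\min\{j,\lfloor\log_2 r\rfloor-1\}$ for $w$) all have to line up so that the cruder telescoped bounds still sandwich $\eta^r$ correctly rather than missing by a small additive error; this is why the lower bound carries the extra $-j-1$ term while the upper bound does not. The restriction to $\Omega'$ is what guarantees $\tau^k x = 2^k x$ cleanly for $0\le k\le j$ and that $\chi\circ\tau^k(x)$ genuinely lies in the half-open interval $[2^{j-k},2^{j-k+1}-1)$ as used in Lemma~\ref{lem: ai+k}, so I would invoke that lemma and Remark~\ref{rem: Omega} at the start and work entirely on $\Omega'$ thereafter.
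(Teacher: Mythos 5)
Your proof of the untruncated comparison $w_m\le\eta\le v_m$ follows the same route as the paper: reduce $\eta$ on $J_{j,i}^m$ to the sum $\sum_{k=0}^{j}\lfloor 1/(2^kx)\rfloor$ via Lemma~\ref{lem: ai+k} and bound termwise by $t-1<\lfloor t\rfloor\le t$. For the truncation, the paper takes a slightly different tack: it rewrites $\eta^r(x)$ as $\eta$ evaluated at shifted points $2^{\max\{0,\phi(x)-q\}}x$ and $2^{\max\{0,\phi(x)-q-1\}}x$ with $q=\lfloor\log_2 r\rfloor$, observes these land in $J_{\min\{j,q-1\},i}^m$ and $J_{\min\{j,q\},i}^m$ respectively, and then applies the untruncated estimate at the shifted point. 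Your direct re-indexing of the truncated sum is equivalent in spirit; the payoff of the paper's change of variables is that the case split on whether $j\le\lfloor\log_2 r\rfloor$ is handled in one line rather than redone.

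There is, however, a concrete gap in your lower bound. You claim that ``dropping the $-2^m$ term only loosens the bounds in the correct directions,'' but this is only true for the upper bound. Substituting $1/x>2^{j+m}/(2^m-i)$ into your estimate gives
\begin{align*}
\eta(x)>\frac{2^{j+m+1}-2^m}{2^m-i}-(j+1)=z_{j,i}-\frac{2^m}{2^m-i},
\end{align*}
and $2^m/(2^m-i)\in[1,2)$, so you fall short of $z_{j,i}$; dropping $-2^m$ from the numerator would \emph{increase} the quantity, which is exactly the wrong direction for a lower bound. The deficit is not a phantom: for $m=2$, $j=0$, $i=1$ the atom $J_{0,1}^2=[1/2,3/4)$ has $\eta(x)=\chi(x)=1$ on $\Omega'$ while $z_{0,1}=8/3-1=5/3>1$. (The paper's own derivation of \eqref{eq: chi eta estim 2} contains a sign slip in passing from $\chi+\sum_{k=1}^{\phi(x)-1}(\chi\cdot 2^{-k}-1)$, which equals $\sum_{k=0}^{\lfloor\log_2\chi\rfloor}\chi\cdot 2^{-k}-\lfloor\log_2\chi\rfloor$, to the claimed $\sum_{k=0}^{\lfloor\log_2\chi\rfloor}\chi\cdot 2^{-k}-(\lfloor\log_2\chi\rfloor-1)$; the lower bound of the lemma appears to hold only up to an additive constant, which is harmless for the asymptotics downstream but not for the lemma exactly as stated.) You cannot wave this away as bookkeeping; the argument needs either a sharper termwise estimate on $\lfloor 1/(2^kx)\rfloor$, an adjusted definition of $z_{j,i}$, or an explicit acknowledgement that the lower inequality holds only up to an $O(1)$ error.
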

\begin{proof}
We start by showing
\begin{align}
 w_m\left(x\right)\leq \eta\left(x\right)\leq v_m\left(x\right),\label{eq: rel w eta wo ell}
\end{align}
for all $x\in\Omega'$.
We will first give a connection between $\chi\left(x\right)$
and $\eta\left(x\right)$.
Let $x\in J_{n}\cap\Omega'$, then we have on the one hand $\phi\left(x\right)=n+1$, see the proof of Lemma \ref{lem: lambda tauB invar}.
On the other hand, $\left\lfloor\log_2\chi\left(x\right)\right\rfloor=\left\lfloor\log_2 \left\lfloor 1/x\right\rfloor\right\rfloor=n$
giving $\phi\left(x\right)=\left\lfloor\log_2\chi\left(x\right)\right\rfloor+1$.

Using Lemma \ref{lem: ai+k} gives
\begin{align}
\chi\left(x\right)+\sum_{k=1}^{\phi\left(x\right)-1}\left(\chi\left(x\right)\cdot 2^{-k}-1\right) 
\leq \eta\left(x\right)
\leq \sum_{k=0}^{\phi\left(x\right)-1}\chi\left(x\right)\cdot 2^{-k}.\label{eq: eta beta estim}
\end{align}
Since
$\phi\left(x\right)-1=\left\lfloor\log_2 \chi\right\rfloor$
using the geometric series estimate gives
\begin{align*}
 \chi\left(x\right)+\sum_{k=1}^{\phi\left(x\right)-1}\left(\chi\left(x\right)\cdot 2^{-k}-1\right)
 &\geq \sum_{k=0}^{\left\lfloor\log_2 \chi\right\rfloor}\chi\cdot 2^{-k}-\left(\left\lfloor\log_2 \chi\right\rfloor-1\right)
 \geq 2\chi-\left\lfloor\log_2 \chi\right\rfloor+1.
\end{align*}
Applying this on \eqref{eq: eta beta estim} and using the geometric series formula also for the righthand side of \eqref{eq: eta beta estim} gives
\begin{align}
2\chi-\left\lfloor\log_2 \chi\right\rfloor+1\leq \eta\leq 2\chi.\label{eq: chi eta estim 2}
\end{align}
Given this estimate and assuming that $x\in J_{j,i}^m$ yields
\begin{align}
 \chi\left(x\right)
 \leq \left\lfloor \left(\frac{1}{2^j}-\frac{i+1}{2^{j+m}}\right)^{-1}\right\rfloor
 =\left\lfloor \cfrac{2^{j+m}}{2^m-i-1}\right\rfloor
 \leq \frac{2^{j+m}}{2^m-i-1}.\label{eq: chi leq}
\end{align}
Applying this on the second inequality of \eqref{eq: chi eta estim 2} gives the second estimate of \eqref{eq: rel w eta wo ell}.

On the other hand, if $x\in J_{j,i}^m$ we have that 
\begin{align}
 \chi\left(x\right)
 \geq \left\lfloor \left(\frac{1}{2^j}-\frac{i}{2^{j+m}}\right)^{-1}\right\rfloor
 =\left\lfloor \frac{2^{j+m}}{2^m-i}\right\rfloor
 \geq \frac{2^{j+m}}{2^m-i}-1\label{eq: chi geq 1}
\end{align}
and using \eqref{eq: chi leq} gives 
\begin{align}
 \left\lfloor\log_2 \chi\right\rfloor
 \leq \max_{i\in\{0,\ldots, 2^{m-1}-1\}}\left\lfloor\log_2 \frac{2^{j+m}}{2^m-i-1}\right\rfloor
 \leq \log_2 2^{j+1}=j+1.\label{eq: chi geq 2}
\end{align}
Applying \eqref{eq: chi geq 1} and \eqref{eq: chi geq 2} on \eqref{eq: chi eta estim 2} 
gives the first estimate in \eqref{eq: rel w eta wo ell}.

In order to investigate 
$\eta^{r}(x)$ further, we notice that 
on the one hand
\begin{align}
 \{\chi\leq r\}
 &=\left\{x\colon \left\lfloor\frac{1}{x}\right\rfloor\leq r\right\}
 =\left\{x\colon \log_2\left\lfloor\frac{1}{x}\right\rfloor\leq \log_2 r\right\}\notag\\
 &\supset \left\{x\colon \log_2\frac{1}{x}\leq \left\lfloor\log_2 r\right\rfloor\right\}
 =\left\{x\colon x\geq 2^{-\left\lfloor\log_2 r\right\rfloor}\right\}.\label{eq: chi leq r supset}
\end{align}

On the other hand
\begin{align}
 \{\chi\leq r\}
 &=\left\{x\colon \log_2\left\lfloor\frac{1}{x}\right\rfloor\leq \log_2 r\right\}
 \subset \left\{x\colon \left\lfloor\log_2\left\lfloor\frac{1}{x}\right\rfloor\right\rfloor\leq \left\lfloor\log_2 r\right\rfloor\right\}
 =\left\{x\colon \left\lfloor\log_2\frac{1}{x}\right\rfloor\leq \left\lfloor\log_2 r\right\rfloor\right\}\notag\\
 &\subset \left\{x\colon \log_2\frac{1}{x}\leq \left\lfloor\log_2 r\right\rfloor+1\right\}
 =\left\{x\colon x\geq 2^{-\left\lfloor\log_2 r\right\rfloor-1}\right\}.\label{eq: chi leq r subset}
\end{align}

Furthermore, using the definition 
\[
\eta^{r}(x) 
=\sum_{k=0}^{\phi(x)-1}\chi^r\circ\tau^k(x)
= \sum_{k=0}^{\phi(x)-1}\left(\chi\cdot\mathbbm{1}_{\{\chi\leq r\}}\right)\circ\tau^k(x) 
\]
and applying \eqref{eq: chi leq r supset} and \eqref{eq: chi leq r subset} gives
\begin{align}
\sum_{k=0}^{\phi(x)-1}\left(\chi\cdot\mathbbm{1}_{[2^{-\lfloor\log_2 r\rfloor},1)}\right)\circ\tau^k(x)
&\leq \eta^{r}\left(x\right)
\leq \sum_{k=0}^{\phi(x)-1}\left(\chi\cdot\mathbbm{1}_{[2^{-\lfloor\log_2 r\rfloor-1},1)}\right)\circ\tau^k(x).\label{eq: sum eta ell sum}
\end{align}

Moreover, if $\phi(x)=n$, then $x\in J_{n-1}$
and for all $j\in[0,n-1]\cap\N_0$ we have that 
$\tau^j x=2^j x\in J_{n-j-1}$.
On the other hand, for $q\in\N_0$, we have that
$\{x\geq 2^{-q}\}\cap \Omega'=\bigcup_{j=0}^q J_j\cap\Omega'$.
This implies for all $x\in\Omega'$
\begin{align*}
\sum_{k=0}^{\phi(x)-1}\left(\chi\cdot\mathbbm{1}_{[2^{-q},1)}\right)\circ \tau^k(x)
 &=\sum_{k=0}^{\phi(x)-1}\left(\chi\cdot\mathbbm{1}_{[2^{-q},1)}\right)(2^kx)\\
 &=\sum_{k=\max\{0, \phi(x)-q\}}^{\phi(x)-1}\left(\chi\cdot\mathbbm{1}_{[2^{-q},1)}\right)\left(2^kx\right)\\
 &=\sum_{k=\max\{0, \phi(x)-q\}}^{\phi(x)-1}\chi\left(2^kx\right)\\
 &=\sum_{k=0}^{\phi(x)-\max\{0, \phi(x)-q\}-1}\chi\left(2^{k+\max\{0, \phi(x)-q\}}x\right)\\
 &=\sum_{k=0}^{\phi(x)-\max\{0, \phi(x)-q\}-1}\chi\circ\tau^k\left(2^{\max\{0, \phi(x)-q\}}x\right).
\end{align*}
Moreover,
\[
 \phi\left(2^{\max\{0, \phi(x)-q\}}x\right)=\phi(x)-\max\{0, \phi(x)-q\},
\]
and using the definition of $\eta$ implies 
\begin{align*}
\sum_{k=0}^{\phi(x)-1}\left(\chi\cdot \mathbbm{1}_{[2^{-q},1)}\right)(\tau^k(x))
 &=\sum_{k=\max\{0, \phi(x)-q\}}^{\phi(x)-1}\chi(\tau^k(x))
 =\sum_{k=0}^{\phi(\tau^{\max\{0, \phi(x)-q\}})-1}\chi(\tau^k(x))\\
 &=\eta\left(2^{\max\{0, \phi(x)-q\}}x\right).
\end{align*}
Applying \eqref{eq: sum eta ell sum} gives for all $x\in\Omega'$
\begin{align*}
 \eta\left(2^{\max\{0, \phi(x)-\lfloor\log_2 r\rfloor\}}x\right)
 \leq \eta^r(x)
 \leq \eta\left(2^{\max\{0, \phi(x)-\lfloor\log_2 r\rfloor-1\}}x\right).
\end{align*}

Furthermore, if $x\in J_{j,i}$, then, for all $q\in[0,j]\cap\N_0$, it holds that  $2^{\max\{j-q,0\}}x\in J_{\max\{j,\lfloor\log_2r\rfloor\},i}$.
For $x\in\Omega'$, applying the first/second inequality of \eqref{eq: rel w eta wo ell} on \eqref{eq: sum eta ell sum} and \eqref{eq: def wmr vmr}
gives the first/second inequality of \eqref{eq: rel w eta}.
\end{proof}

\begin{lemma}\label{lem: EW vm}
For all $\epsilon>0$ there exist $M\in\N$, $R>0$ such that for all
$m>\N_{\geq M}$ and $ r\geq R$
\begin{align*}
 \mathbb{E}\left(v_m^{r}\right)
 \leq \left(2+\epsilon\right)\cdot \log r.
\end{align*}
\end{lemma}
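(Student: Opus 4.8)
The plan is to compute $\mathbb{E}(v_m^{r})$ almost exactly and then read off the bound. Set $K\coloneqq\left\lfloor\log_2 r\right\rfloor$. By \eqref{eq: def wmr vmr} the function $v_m^{r}$ is constant with value $y_{\min\{j,K\},i}$ on the interval $J_{j,i}^m$, and by \eqref{eq: def Jji} this interval has Lebesgue measure $2^{-(j+m)}$; since the $(J_{j,i}^m)_{j,i}$ partition $[0,1)$,
\begin{align*}
\mathbb{E}\left(v_m^{r}\right)=\sum_{j=0}^{\infty}\sum_{i=0}^{2^{m-1}-1}y_{\min\{j,K\},i}\cdot 2^{-(j+m)}.
\end{align*}
The key observation, immediate from \eqref{eq: def y z}, is that the product $y_{j,i}\cdot 2^{-(j+m)}=2/(2^m-i-1)$ does not depend on $j$.

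First I would split the sum over $j$ at $j=K$. For each $0\le j\le K$ the inner sum over $i$ equals $S_m\coloneqq\sum_{i=0}^{2^{m-1}-1}2/(2^m-i-1)$, so these levels contribute $(K+1)S_m$. For $j>K$ the value is frozen at $y_{K,i}$, and since $\sum_{j>K}2^{-(j+m)}=2^{-(K+m)}$ one gets $\sum_{i}y_{K,i}\cdot 2^{-(K+m)}=S_m$, one further copy of the same sum. Hence
\begin{align*}
\mathbb{E}\left(v_m^{r}\right)=(K+2)\,S_m.
\end{align*}

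Next I would control $S_m$: reindexing by $\ell=2^m-i-1$ gives $S_m=2\sum_{\ell=2^{m-1}}^{2^m-1}1/\ell$, which converges to $2\log 2$ as $m\to\infty$ (it is twice a difference of harmonic numbers, comparable to $\int_{2^{m-1}}^{2^m}\mathrm{d}x/x=\log2$). So, given $\epsilon>0$, a routine bookkeeping argument finishes the proof: choose $M$ so that $S_m\le 2\log2+\delta$ for all $m\ge M$, where $\delta>0$ is small enough that $\delta/\log2<\epsilon/2$; then, using $K\le\log r/\log2$,
\begin{align*}
\mathbb{E}\left(v_m^{r}\right)\le (K+2)(2\log2+\delta)\le 2\log r+\frac{\epsilon}{2}\log r+2(2\log2+\delta),
\end{align*}
and choosing $R$ so large that the additive constant is at most $\frac{\epsilon}{2}\log r$ for all $r\ge R$ yields $\mathbb{E}(v_m^{r})\le(2+\epsilon)\log r$ for all $m\ge M$ and $r\ge R$.

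The argument has no real obstacle; the only thing worth noticing is that the powers of two in the definition \eqref{eq: def y z} of $y_{j,i}$ are exactly calibrated so that $y_{j,i}\cdot\lambda(J_{j,i}^m)$ is independent of $j$, which is precisely what makes the double sum telescope into $(K+2)S_m$. The matching lower estimate in Lemma \ref{lem: EW wm} should be obtained the same way, replacing $y_{j,i}$ by $z_{j,i}$ and bounding the analogous harmonic sum from below.
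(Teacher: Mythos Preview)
Your proof is correct and essentially identical to the paper's argument: the paper also computes $\mathbb{E}(v_m^{r})=2\left(\lfloor\log_2 r\rfloor+2\right)\sum_{i=0}^{2^{m-1}-1}\frac{1}{2^m-i-1}$, which is exactly your $(K+2)S_m$, and then bounds the sum by an integral to show it tends to $\log 2$. The only cosmetic difference is that you reindex the harmonic sum before comparing with an integral, whereas the paper integrates directly in the variable $i$.
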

\begin{proof}
Since $v_m^{r}$ is a piecewise constant function
attaining the value $y_{\min\{j,\lfloor\log_2 r\rfloor\},i}$ on the interval $J_{j,i}^m$
we have that 
 \begin{align}
  \mathbb{E}\left(v_m^{r}\right)
  &=\sum_{i=0}^{2^{m-1}-1}\sum_{j=0}^{\infty}\lambda\left(J_{j,i}^m\right)\cdot y_{\min\{j,\lfloor\log_2 r\rfloor\},i}\notag\\
  &=\sum_{i=0}^{2^{m-1}-1}\left(\sum_{j=0}^{\lfloor\log_2 r\rfloor}\lambda\left(J_{j,i}^m\right)\cdot y_{j,i}
  +\sum_{j=\lfloor\log_2 r\rfloor+1}^{\infty}\lambda\left(J_{j,i}^m\right)\cdot y_{\lfloor\log_2 r\rfloor,i}\right).\label{eq: E vm estim2}
 \end{align}
From the definition of $J_{j,i}^m$ it follows that
\begin{align}
 \lambda\left(J_{j,i}^m\right)
 =\frac{1}{2^j}-\frac{i}{2^{j+m}}-\left(\frac{1}{2^j}-\frac{i+1}{2^{j+m}}\right)
 =\frac{1}{2^{j+m}}.\label{eq: lambda J}
\end{align}
Hence, inserting this value and the value of $y_{j,i}$ given in \eqref{eq: def y z}
into \eqref{eq: E vm estim2} yields
\begin{align}
 \mathbb{E}\left(v_m^{r}\right)
 &=\sum_{i=0}^{2^{m-1}-1}\left(\sum_{j=0}^{\lfloor\log_2 r\rfloor}\frac{2}{2^m-i-1}
  +\sum_{j=\lfloor\log_2 r\rfloor+1}^{\infty}\frac{1}{2^m-i-1}\cdot \frac{1}{2^{j-\lfloor\log_2 r\rfloor-1}}\right)\notag\\
 &=\sum_{i=0}^{2^{m-1}-1}\left(\frac{2\left(\lfloor\log_2 r\rfloor+1\right)}{2^m-i-1}
  +\frac{2}{2^m-i-1}\right)\notag\\
 &=2\left(\lfloor\log_2 r\rfloor+2\right)\sum_{i=0}^{2^{m-1}-1}\frac{1}{2^m-i-1}.\label{eq: E vm estim}
\end{align}
Since $1/x$ is monotonically decreasing, the last sum can be estimated by the integral
\begin{align*}
\sum_{i=0}^{2^{m-1}-1}\frac{1}{2^m-i-1}
&\leq \int_0^{2^{m-1}}\frac{1}{2^m-x-1}\mathrm{d}x
=\log(2^{m}-1)-\log(2^{m-1}-1)
\end{align*}
and for each $\epsilon>0$ we can find $M$ such that for $m\geq M$
\begin{align*}
\sum_{i=0}^{2^{m-1}-1}\frac{1}{2^m-i-1}
&\leq \left(1+\epsilon/3\right)\cdot \log 2.
\end{align*}
Combining this with \eqref{eq: E vm estim} gives the statement of the lemma.
\end{proof}

The next lemma is the analogous statement to the previous lemma for $w_m^r$.
\begin{lemma}\label{lem: EW wm}
For all $\epsilon>0$ there exist $M\in\N$, $R>0$ such that for all
$m>\N_{\geq M}$ and $ r\geq R$
\begin{align*}
 \mathbb{E}\left(w_m^{r}\right)\geq (2+\epsilon)\cdot\log  r.
\end{align*}
\end{lemma}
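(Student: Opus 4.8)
The plan is to carry out exactly the computation in the proof of Lemma \ref{lem: EW vm}, but now producing a lower bound. Recall from \eqref{eq: def wmr vmr} that $w_m^r$ is constant on each interval $J_{j,i}^m$, where it takes the value $z_{\min\{j,\lfloor\log_2 r\rfloor-1\},i}$, and that $\lambda(J_{j,i}^m)=2^{-(j+m)}$ by \eqref{eq: lambda J}. First I would split the expectation at $j=\lfloor\log_2 r\rfloor-1$, writing
\[
\mathbb{E}\big(w_m^{r}\big)=\sum_{i=0}^{2^{m-1}-1}\Big(\sum_{j=0}^{\lfloor\log_2 r\rfloor-1}2^{-(j+m)}\,z_{j,i}+\sum_{j=\lfloor\log_2 r\rfloor}^{\infty}2^{-(j+m)}\,z_{\lfloor\log_2 r\rfloor-1,i}\Big).
\]
Inserting $z_{j,i}=2^{j+m+1}/(2^m-i)-j-1$ from \eqref{eq: def y z}, the first inner sum equals $2\lfloor\log_2 r\rfloor/(2^m-i)-2^{-m}\sum_{j=0}^{\lfloor\log_2 r\rfloor-1}(j+1)2^{-j}$, while the frozen tail, evaluated as a geometric series, equals $2/(2^m-i)-\lfloor\log_2 r\rfloor\cdot 2^{1-m-\lfloor\log_2 r\rfloor}$. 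Adding these and summing over $i$ then gives the exact counterpart of \eqref{eq: E vm estim},
\[
\mathbb{E}\big(w_m^{r}\big)=2\big(\lfloor\log_2 r\rfloor+1\big)\sum_{i=0}^{2^{m-1}-1}\frac{1}{2^m-i}\;-\;\frac12\sum_{j=0}^{\lfloor\log_2 r\rfloor-1}\frac{j+1}{2^{j}}\;-\;\frac{\lfloor\log_2 r\rfloor}{2^{\lfloor\log_2 r\rfloor}}.
\]

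Next I would dispose of the two subtracted terms: since $\sum_{j\ge 0}(j+1)2^{-j}=4$, the middle term is at most $2$, and the last term tends to $0$ as $r\to\infty$; both are therefore bounded by an absolute constant and negligible against $\log r$. For the leading term I would bound the harmonic-type sum from below by an integral comparison, using that $1/x$ is decreasing:
\[
\sum_{i=0}^{2^{m-1}-1}\frac{1}{2^m-i}=\sum_{k=2^{m-1}+1}^{2^m}\frac1k\ge\int_{2^{m-1}+1}^{2^m+1}\frac{\mathrm{d}x}{x}=\log\big(2^{m}+1\big)-\log\big(2^{m-1}+1\big),
\]
and the right-hand side converges to $\log 2$ as $m\to\infty$; hence there is $M$ such that $\sum_{i=0}^{2^{m-1}-1}1/(2^m-i)\ge(1-\epsilon/3)\log 2$ for all $m\ge M$. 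Using $\lfloor\log_2 r\rfloor+1>\log_2 r=\log r/\log 2$, the leading term then exceeds $2(1-\epsilon/3)\log r$, and absorbing the two $O(1)$ corrections for $r$ larger than a suitable $R$ yields $\mathbb{E}(w_m^r)\ge(2-\epsilon)\log r$ for all $m\ge M$ and all $r\ge R$, which is the lower bound of the lemma with leading constant $2$.

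Since the whole argument is the mirror image of the proof of Lemma \ref{lem: EW vm}, I do not expect a genuine obstacle. The only points requiring care are that the sum $\sum_{i=0}^{2^{m-1}-1}1/(2^m-i)$ approaches $\log 2$ strictly from below, so one must take $m$ large in order to absorb the deficit into the factor $1-\epsilon/3$; and that the freezing index in the definition of $w_m^r$ is $\lfloor\log_2 r\rfloor-1$ rather than $\lfloor\log_2 r\rfloor$, which is exactly what makes the frozen tail contribute $2/(2^m-i)$ per index block --- the same order as the running part --- rather than $1/(2^m-i)$ as for $v_m^r$, and this is precisely the mechanism keeping the leading constant equal to $2$.
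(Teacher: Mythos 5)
Your proposal is correct and follows the same route as the paper: expand $\mathbb{E}(w_m^r)$ over the partition $(J_{j,i}^m)$, split at the truncation index, and bound the harmonic-type sum $\sum_{i=0}^{2^{m-1}-1}(2^m-i)^{-1}$ from below by an integral comparison; both arguments land on the constant $2-\epsilon$, confirming that the ``$(2+\epsilon)$'' in the statement is a misprint for $(2-\epsilon)$ (consistent with how the lemma is later used in \eqref{eq: Ewm geq}). One point in your favour: you evaluate the correction terms exactly and find they are $O(1)$ uniformly in $m$ (namely $\tfrac12\sum_{j\ge 0}(j+1)2^{-j}\le 2$ plus a vanishing term), whereas the paper bounds $\sum_{j\ge 0}(j+1)2^{-j-m}$ crudely by $4$ per index $i$ and so accumulates a subtracted $2^{m+1}$, which can only be absorbed by taking $r$ large \emph{depending on} $m$ --- slightly at odds with the stated quantifier order ``there exist $M,R$ such that for all $m\ge M$ and $r\ge R$.'' Your tighter bookkeeping removes that dependence, so your version is, if anything, the cleaner one.
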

\begin{proof}
Analogously as in the proof of Lemma \ref{lem: EW vm} we have that
 \begin{align}
  \mathbb{E}\left(w_m^{r}\right)
  &=\sum_{i=0}^{2^{m-1}-1}\left(\sum_{j=0}^{\lfloor\log_2 r\rfloor-1}\lambda\left(J_{j,i}^m\right)\cdot z_{j,i}
  +\sum_{j=\lfloor\log_2 r\rfloor}^{\infty}\lambda\left(J_{j,i}^m\right)\cdot z_{\lfloor\log_2 r\rfloor,i}\right).\label{eq: E wm estim1}
 \end{align}
From \eqref{eq: lambda J} and \eqref{eq: def y z} it follows that 
\begin{align}
 \MoveEqLeft\sum_{j=0}^{\lfloor\log_2 r\rfloor-1}\lambda\left(J_{j,i}^m\right)\cdot z_{j,i}
  +\sum_{j=\lfloor\log_2 r\rfloor}^{\infty}\lambda\left(J_{j,i}^m\right)\cdot z_{\lfloor\log_2 r\rfloor,i}\notag\\
 &=\sum_{j=0}^{\lfloor\log_2 r\rfloor-1}\frac{2}{2^m-i}
  +\sum_{j=\lfloor\log_2 r\rfloor}^{\infty}\frac{1}{2^m-i}\cdot \frac{1}{2^{j-\lfloor\log_2 r\rfloor-1}}
  -\sum_{j=0}^{\infty}\frac{j+1}{2^{j+m}}\notag\\
 &>\sum_{j=0}^{\lfloor\log_2 r\rfloor-1}\frac{2}{2^m-i}
  -\sum_{j=0}^{\infty}\frac{j+1}{2^{j+m}}\notag\\
 &\geq \frac{2\lfloor\log_2 r\rfloor}{2^m-i}-4,\label{eq: E wm estim2}
\end{align}
for $m$ sufficiently large, where the last inequality follows from 
\[
 \sum_{j=0}^{\infty}\frac{j+1}{2^{j+m}}
 \leq \sum_{j=0}^{\infty}\frac{1}{2^{j/2}}
 =\frac{1}{1-2^{-1/2}}
 <4,
\]
for $m$ sufficiently large. 
Combining \eqref{eq: E wm estim1} and \eqref{eq: E wm estim2} yields
\begin{align}
 \mathbb{E}\left(w_m^{r}\right)
  &\geq \sum_{i=0}^{2^{m-1}-1}\left(\frac{2\lfloor\log_2 r\rfloor}{2^m-i}-4\right)
  =2\lfloor\log_2 r\rfloor\cdot \sum_{i=0}^{2^{m-1}-1}\left(\frac{1}{2^m-i}\right)-2^{m+1},\label{eq: E wm estim3}
\end{align}
for $m$ sufficiently large, where the sum can be estimated by an integral as follows
\begin{align*}
\sum_{i=0}^{2^{m-1}-1}\frac{1}{2^m-i}
&\geq \int_{-1}^{2^{m-1}-1}\frac{1}{2^m-x}\mathrm{d}x
=\log(2^{m}+1)-\log(2^{m-1}+1)
\geq (1-\epsilon/3)\log 2,
\end{align*}
for $m$ sufficiently large.
Combining this with \eqref{eq: E wm estim3} gives 
\begin{align*}
 \mathbb{E}\left(w_m^{r}\right)
 &\geq (1-\epsilon/3)\log 2\cdot 2\left\lfloor\log_2\right\rfloor-2^{m+1}
 \geq (2-\epsilon)\cdot \log_2 r,
\end{align*}
for $ r$ and $m$ sufficiently large (if $\epsilon$ was chosen sufficiently small)
and thus the statement of the lemma follows.
\end{proof}

In the following lemma we give a statement related to Lemma \ref{lem: Tn tn a.s.}
using $\left(\eta\circ \tau_B^{n-1}\right)$ instead of $\left(\chi\circ\tau^{n-1}\right)$. 
With the previously attained properties of $\eta$ we are able to prove this lemma.

To formulate this lemma let, for any function $\varphi\colon [0,1)\to\mathbb{R}_{>0}$ and $r>0$,
\[
 \mathsf{T}_n^r\varphi=\sum_{k=1}^n\varphi\circ \tau_B^{k-1}\cdot\mathbbm{1}_{\{\varphi\circ \tau_B^{k-1}\leq r\}}
\]
and as in Lemma \ref{lem: Tn tn a.s.} we set $t_n=n\cdot \left(\log\left(n\right)\right)^{3/4}$.
\begin{lemma}\label{lem: Sn* allg}
We have that
\begin{align*}
\lim_{n\to\infty}\frac{\mathsf{T}_n^{2t_{n}} \eta}{\mathbb{E}\left(\mathsf{T}_n^{2t_{n}} \eta\right)}=1~\text{ a.s.}
\end{align*}
\end{lemma}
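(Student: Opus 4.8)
The plan is to exploit the independence structure provided by the jump transformation. By Lemma~\ref{lem: indep of mth entry}, for a fixed block length $m\in\N$ and residue $u\in\{0,\ldots,m-1\}$, the random variables $(w_m\circ\tau_B^{u+(n-1)m})_n$ are i.i.d., and likewise for $v_m$; the same holds for their truncated versions $v_m^r,w_m^r$. The first step is therefore to replace $\eta$ by its approximants: since by Lemma~\ref{lem: rel w eta ell} we have $w_m^{2t_n}\le\eta^{2t_n}\le v_m^{2t_n}$ pointwise on $\Omega'$, and since $\eta^{2t_n}$ is a weighted version of the truncation $\eta\cdot\mathbbm 1_{\{\eta\le 2t_n\}}$ (care is needed that $\eta\le r$ and $\eta^r$ differ by the at-most-one large term, but this is controlled exactly as in Section~\ref{subsec: zero one laws}), it suffices to prove the analogous almost sure convergence for the sums $\sum_{k=1}^n v_m^{2t_n}\circ\tau_B^{k-1}$ and $\sum_{k=1}^n w_m^{2t_n}\circ\tau_B^{k-1}$, and then to check that the upper and lower bounds match asymptotically. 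Lemmas~\ref{lem: EW vm} and~\ref{lem: EW wm} already guarantee that $\E(v_m^r)$ and $\E(w_m^r)$ are both $\sim 2\log r$ as $r,m\to\infty$, which (together with $\log t_n\sim\log n$) will pin down $\E(\mathsf T_n^{2t_n}\eta)\sim 2n\log n$ and give the matching of the two bounds once $m$ is taken large.

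The second step is the quantitative almost sure convergence for the independent-block sums. For fixed $m$, split $\{1,\ldots,n\}$ into $m$ arithmetic progressions according to the residue mod $m$; on each progression the summands $v_m^{2t_n}\circ\tau_B^{u+(j-1)m}$ are independent. For the a.s.\ statement I would first pass to a subsequence: take $n=n_k$ along which (say) $n_{k+1}/n_k\to 1$ slowly, e.g.\ $n_k=\lfloor(1+1/k)^k\rfloor$ or a dyadic-type refinement, so that Borel--Cantelli can be applied with a Chebyshev/second-moment bound, and then control the oscillation between consecutive $n_k$ by monotonicity in $n$ of the truncation level (the truncated summands are nondecreasing in $r$, hence in $t_n$, and the number of summands grows by a negligible proportion). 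The variance of a single truncated term $\mathrm{Var}(v_m^{r})$ is of order $r$ (dominated by the top of the truncated range, since the tail of $\chi$ is $1/\lfloor x\rfloor$, giving $\E((a_1^r)^2)\asymp r$), so $\mathrm{Var}\big(\sum_{k\le n} v_m^{2t_n}\circ\tau_B^{k-1}\big)=O(n\cdot t_n)=O(n^2(\log n)^{3/4})$, while the square of the mean is of order $n^2(\log n)^2$; the ratio is $O((\log n)^{-5/4})$, which is summable along a geometric-type subsequence. This is exactly the place where the precise truncation level $t_n=n(\log n)^{3/4}$ (rather than $n\log n$) is used.

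The third step is to reassemble: having shown $\sum_{k\le n}v_m^{2t_n}\circ\tau_B^{k-1}/\big(n\,\E(v_m^{2t_n})\big)\to 1$ a.s.\ and the same for $w_m$, and having shown via Lemmas~\ref{lem: EW vm}, \ref{lem: EW wm} that $\E(v_m^{2t_n})/\E(w_m^{2t_n})\to 1$ as first $n\to\infty$ then $m\to\infty$, one sandwiches $\mathsf T_n^{2t_n}\eta$ between the two and lets $m\to\infty$ along a suitable diagonal, concluding $\mathsf T_n^{2t_n}\eta/\E(\mathsf T_n^{2t_n}\eta)\to 1$ a.s. The main obstacle I expect is the bookkeeping in this double limit: the independence (and hence the variance estimate and Borel--Cantelli step) is only available for \emph{fixed} $m$, so one must first fix $m$, obtain an a.s.\ statement with an error that is explicit in $m$, and only then let $m\to\infty$; simultaneously one must handle the gap between $\eta^{r}$ and $\eta\cdot\mathbbm 1_{\{\eta\le r\}}$ and the interpolation between subsequence values of $n$. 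None of these is deep, but getting the quantifiers in the right order (and making sure the exceptional null sets, one for each $m$, can be unioned) is where the care lies.
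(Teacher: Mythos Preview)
Your plan follows the same architecture as the paper's proof: sandwich $\eta^{r}$ between $w_m^{r}$ and $v_m^{r}$ via Lemma~\ref{lem: rel w eta ell}, split $\{1,\ldots,n\}$ into $m$ residue classes to obtain independence (Lemma~\ref{lem: indep of mth entry}), apply a concentration inequality plus Borel--Cantelli, and close the sandwich using Lemmas~\ref{lem: EW vm} and~\ref{lem: EW wm}. The substantive difference is the concentration step. You propose a second-moment bound, which (with the variance estimate $\mathrm{Var}(v_m^{r})=O_m(r)$ you correctly identify) gives
\[
\lambda\bigl(|S-\mathbb E S|>\epsilon\,\mathbb E S\bigr)=O_{m,\epsilon}\bigl((\log n)^{-5/4}\bigr).
\]
The paper instead invokes the maximal Bernstein inequality (Lemma~\ref{Bernstein 1}) and organises the argument by blocks $I_j^m=[m^j,m^{j+1})$: a single event of the form $\Gamma(I_{j-1}^m,v_m^{2^p}\circ\tau_B^u,\tau_B^m)$ covers the maximum over all $n$ in the block, the finitely many distinct truncation levels $p\in[r_j,s_j]$ occurring there are enumerated explicitly, and the resulting bound $\exp(-c\,j^{1/5})$ is summed over $j$.

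Your Chebyshev route is viable, but be aware of a small tension in your sketch: along a genuinely geometric subsequence $n_k=c^k$ the bounds $k^{-5/4}$ are summable, yet the monotone sandwich for intermediate $n$ only yields limits in $[c^{-1},c]$, not $1$. You must either run the argument for $c=1+\delta$ and let $\delta\downarrow 0$ over a countable family (taking the intersection of the resulting full-measure sets), or use a sub-geometric sequence such as $n_k=\lfloor\exp(k^\beta)\rfloor$ with $\beta\in(4/5,1)$, for which $\sum_k(\log n_k)^{-5/4}<\infty$ while $n_{k+1}/n_k\to 1$. Either works, but the quantifier bookkeeping is a bit heavier than your outline suggests. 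What your approach buys is avoiding the maximal Bernstein lemma and the explicit count of truncation levels in $[r_j,s_j]$; what the paper's buys is a one-shot Borel--Cantelli over blocks with no subsequence-and-interpolation step and no second $\delta\to 0$ limit. Both arrive at the same conclusion, and your handling of the $m\to\infty$ limit and of the gap between $\eta^{r}$ and $\eta\cdot\mathbbm 1_{\{\eta\le r\}}$ matches what is implicitly done in the paper.
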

In order to prove this lemma we first need an exponential inequality. 
The following lemma generalizes \emph{Bernstein's
inequality} and can be found for example in \cite{hoeffding_probability_1963}.
\begin{lemma}[Generalized Bernstein inequality]\label{Bernstein} For $n\in\mathbb{N}$
let $Y_{1},\ldots,Y_{n}$ be independent random variables such that
$\left\Vert Y_{i}-\mathbb{E}\left(Y_{i}\right)\right\Vert _{\infty}\leq M<\infty$
for $i=1,\ldots,n$. Let $Z_{n}=\sum_{i=1}^{n}Y_{i}$. Then
we have for all $t>0$ that
\begin{align*}
\mathbb{P}\left(\max_{k\leq n}\left|Z_{k}-\mathbb{E}\left(Z_{k}\right)\right|\geq t\right)\leq2\exp\left(-\frac{t^{2}}{2\mathbb{V}\left(Z_{n}\right)+\frac{2}{3}Mt}\right).
\end{align*}
\end{lemma}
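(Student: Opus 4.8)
The plan is to trap the $n$-dependent truncated observable $\eta\cdot\mathbbm{1}_{\{\eta\leq 2t_n\}}$ between the piecewise constant observables $w_m^{2t_n}$ and $v_m^{2t_n}$ — which are $\mathcal{J}^m$-measurable and hence, by Lemma~\ref{lem: indep of mth entry}, give independence along arithmetic progressions modulo $m$ — then to establish a strong law for the Birkhoff sums over $\tau_B$ of those two bounding observables by a Bernstein argument along the geometric blocks $(I_k)$, and finally to let $m\to\infty$ and the accuracy parameter tend to $0$. For the first reduction, note that on $\Omega'$ one has $\eta\geq\chi$ by \eqref{eq: chi eta estim 2}, so $\{\eta\leq r\}\subseteq\{\chi\leq r\}$; and on $\{\chi\leq r\}$, Lemma~\ref{lem: ai+k} gives $\chi\circ\tau^k\leq\chi\leq r$ for every $k<\phi$, so $\eta^r=\eta$ there. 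Combined with Lemma~\ref{lem: rel w eta ell} and the trivial bound $w_m^r\leq 2r$ read off from \eqref{eq: def y z} and \eqref{eq: def wmr vmr}, this yields, for all $x\in\Omega'$,
\begin{align}
w_m^r(x)-2r\cdot\mathbbm{1}_{\{\eta(x)>r\}}\leq \eta(x)\cdot\mathbbm{1}_{\{\eta(x)\leq r\}}\leq v_m^r(x).\label{eq: sketch pointwise sandwich}
\end{align}
Summing this along the $\tau_B$-orbit with $r=2t_n$, using $\eta\leq 2\chi$ to replace $\{\eta\circ\tau_B^{k-1}>2t_n\}$ by $\{\beta_k>t_n\}$, and invoking Lemma~\ref{lem: Xi> 2x io 0} (which forces $\#\{k\leq n\colon\beta_k>t_n\}\leq 1$ eventually a.s.), one gets eventually a.s.\ that
\begin{align}
\sum_{k=1}^{n}w_m^{2t_n}\circ\tau_B^{k-1}-4t_n\leq \mathsf{T}_n^{2t_n}\eta\leq\sum_{k=1}^{n}v_m^{2t_n}\circ\tau_B^{k-1},\label{eq: sketch sum sandwich}
\end{align}
with $4t_n=o(n\log n)$. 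Taking expectations in \eqref{eq: sketch pointwise sandwich} (using the $\tau_B$-invariance of $\lambda$, Lemma~\ref{lem: lambda tauB invar}, and $\lambda(\eta>2t_n)\leq\lambda(\chi>t_n)\leq 1/t_n$) together with Lemmas~\ref{lem: EW vm}, \ref{lem: EW wm} and $\log(2t_n)\sim\log n$ shows, after letting $m\to\infty$, that $\mathbb{E}(\mathsf{T}_n^{2t_n}\eta)\sim 2n\log n$, and that the expectations of the two bounding sums and of $\mathsf{T}_n^{2t_n}\eta$ agree up to factors $1+o(1)$ once $m$ is large.

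It therefore suffices to prove, for each fixed $m$, the strong law
\begin{align}
\sum_{k=1}^{n}v_m^{2t_n}\circ\tau_B^{k-1}\sim n\,\mathbb{E}\big(v_m^{2t_n}\big)\quad\text{a.s.},\label{eq: sketch blockSLLN}
\end{align}
and its analogue with $w$ in place of $v$. I would restrict to the blocks $I_k=[2^k,2^{k+1})\cap\N$ from \eqref{eq: Kk}. Since $v_m^r$ is nondecreasing in $r$ and $t_n$ is nondecreasing, for $n\in I_k$ the sum in \eqref{eq: sketch blockSLLN} lies between $\sum_{k'=1}^{n}v_m^{2t_{2^{k}}}\circ\tau_B^{k'-1}$ and $\sum_{k'=1}^{n}v_m^{2t_{2^{k+1}}}\circ\tau_B^{k'-1}$; the centering is kept as the \emph{true} $n\,\mathbb{E}(v_m^{2t_n})$, and since $\mathbb{E}(v_m^{r})$ grows like $\log_2 r$ by \eqref{eq: E vm estim} while $\log 2t_{2^{k+1}}-\log 2t_{2^k}\to\log 2$, the expectations at the two block endpoints differ from $\mathbb{E}(v_m^{2t_n})$ only by a factor $1+o(1)$, so no spurious multiplicative constant is introduced by passing to the block.

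For a \emph{fixed} truncation level $r=2t_{2^{k+1}}\asymp 2^k k^{3/4}$ I would split the orbit into the $m$ residue classes modulo $m$ — each an i.i.d.\ sequence of length $\lesssim 2^k/m$ by Lemma~\ref{lem: indep of mth entry} — with $\|v_m^r\|_\infty\leq 4r$ and $\mathbb{V}(v_m^r)\leq\|v_m^r\|_\infty\,\mathbb{E}(v_m^r)\lesssim r\log r\asymp 2^k k^{7/4}$, and apply the generalized Bernstein inequality (Lemma~\ref{Bernstein}), using its maximum over partial sums to control all $n\in I_k$ at once, with deviation $t\asymp\delta\,2^k k$. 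The exponent produced is, up to a positive constant depending on $m$ and $\delta$, of order
\begin{align}
\frac{(\delta\,2^k k)^2}{\,2^{2k}k^{7/4}+\delta\,2^{2k}k^{7/4}\,}\asymp\frac{\delta^2}{1+\delta}\,k^{1/4},\label{eq: sketch exponent}
\end{align}
and since $\sum_k\exp(-c\,k^{1/4})<\infty$, the first Borel--Cantelli lemma yields \eqref{eq: sketch blockSLLN} (the $w$-version being identical). Now fix $\delta>0$; by Lemmas~\ref{lem: EW vm}, \ref{lem: EW wm} and the expectation asymptotics above one may take $m$ so large that $n\,\mathbb{E}(v_m^{2t_n})\leq(1+\delta)\,\mathbb{E}(\mathsf{T}_n^{2t_n}\eta)$ and $n\,\mathbb{E}(w_m^{2t_n})\geq(1-\delta)\,\mathbb{E}(\mathsf{T}_n^{2t_n}\eta)$ for all large $n$; feeding \eqref{eq: sketch blockSLLN} and its $w$-analogue into \eqref{eq: sketch sum sandwich} gives $(1-2\delta)\,\mathbb{E}(\mathsf{T}_n^{2t_n}\eta)\leq\mathsf{T}_n^{2t_n}\eta\leq(1+2\delta)\,\mathbb{E}(\mathsf{T}_n^{2t_n}\eta)$ eventually a.s., and letting $\delta\downarrow 0$ along a sequence finishes the proof.

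\textbf{Main obstacle.} The delicate step is the one in the third paragraph: converting the $n$-dependent, only blockwise-independent sums into a form to which Bernstein applies along a subsequence sparse enough that the tail estimates are summable, while not losing a multiplicative constant in passing to the blocks (handled by keeping the true $n$ in the centering and only moving the truncation level to the block endpoints). This is precisely where the exponent $3/4$ in $t_n=n(\log n)^{3/4}$ is forced: it must be small enough that $\mathbb{E}(\mathsf{T}_n^{2t_n}\eta)$ still grows like $2n\log n$, yet large enough that the Bernstein exponent in \eqref{eq: sketch exponent} is a genuine positive power of $k$ rather than a bounded quantity once one passes to the geometric blocks $(I_k)$.
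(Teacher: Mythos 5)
Your proposal does not address the statement at hand. The statement to be proved is the generalized Bernstein inequality itself (Lemma~\ref{Bernstein}): a maximal exponential deviation bound for partial sums of bounded independent random variables. What you have written is instead a proof sketch of Lemma~\ref{lem: Sn* allg} (the strong law for $\mathsf{T}_n^{2t_n}\eta$), and in the course of it you explicitly \emph{invoke} Lemma~\ref{Bernstein} as a known tool ("apply the generalized Bernstein inequality (Lemma~\ref{Bernstein})\dots"). Nothing in your text establishes the bound
\begin{align*}
\mathbb{P}\Bigl(\max_{k\leq n}\bigl|Z_{k}-\mathbb{E}\left(Z_{k}\right)\bigr|\geq t\Bigr)\leq2\exp\Bigl(-\frac{t^{2}}{2\mathbb{V}\left(Z_{n}\right)+\frac{2}{3}Mt}\Bigr),
\end{align*}
so as a proof of the stated lemma it is vacuous (and circular, since the target is assumed).

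For the record, the paper does not prove this lemma either; it cites it to Hoeffding's 1963 paper as a classical result. If you did want to prove it, the standard route is: bound the moment generating function of each centered summand using $\|Y_i-\mathbb{E}(Y_i)\|_\infty\leq M$ to get $\mathbb{E}\exp\left(\lambda\left(Y_i-\mathbb{E}\left(Y_i\right)\right)\right)\leq\exp\bigl(\tfrac{\lambda^2\mathbb{V}(Y_i)/2}{1-\lambda M/3}\bigr)$ for $0<\lambda<3/M$ via the series expansion of the exponential; note that $\exp\left(\lambda\left(Z_k-\mathbb{E}\left(Z_k\right)\right)\right)$ is a nonnegative submartingale so Doob's maximal inequality (or a L\'evy--Ottaviani argument) converts the one-point Chernoff bound into a bound on the maximum over $k\leq n$; optimize over $\lambda$ and combine the two one-sided bounds to obtain the factor $2$. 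Separately, your sketch of Lemma~\ref{lem: Sn* allg} is broadly in the spirit of the paper's actual proof of that lemma (sandwiching by $v_m^{r}$ and $w_m^{r}$, residue classes modulo $m$, Bernstein along geometric blocks, Borel--Cantelli), but that is not the task you were set here.
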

With the help of Lemma \ref{Bernstein} we are able to prove the following
Lemma \ref{Bernstein 1} for the special case of non-negative random
variables.
\begin{lemma}
\label{Bernstein 1} For $n\in\mathbb{N}$ let $Y_{1},\ldots,Y_{n}$
be i.i.d.\ non-negative random variables such that $Y_{1}\leq K<\infty$.
Let $Z_{n}=\sum_{i=1}^{n}Y_{i}$. Then we have for all $\kappa>0$
that
\begin{align*}
\mathbb{P}\left(\max_{k\leq n}\left|Z_{k}-\mathbb{E}\left(Z_{k}\right)\right|\geq\kappa\cdot\mathbb{E}\left(Z_{n}\right)\right)\leq2\exp\left(-\frac{3\kappa^{2}}{6+2\kappa}\cdot\frac{\mathbb{E}\left(Z_{n}\right)}{K}\right).
\end{align*}
\end{lemma}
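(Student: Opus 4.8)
The plan is to derive Lemma \ref{Bernstein 1} directly from Lemma \ref{Bernstein} by a suitable choice of the truncation bound $M$ and of the deviation level $t$. First I would set $Y_i' = Y_i - \mathbb{E}(Y_i)$; since $0 \leq Y_i \leq K$ we have $\|Y_i'\|_\infty \leq K$, so Lemma \ref{Bernstein} applies with $M = K$. Next I would take $t = \kappa \cdot \mathbb{E}(Z_n)$, so that the left-hand side of Lemma \ref{Bernstein} becomes exactly the left-hand side of Lemma \ref{Bernstein 1}. It then remains to bound the exponent
\[
\frac{t^2}{2\mathbb{V}(Z_n) + \tfrac{2}{3} M t} = \frac{\kappa^2 \mathbb{E}(Z_n)^2}{2\mathbb{V}(Z_n) + \tfrac{2}{3} K \kappa \mathbb{E}(Z_n)}
\]
from below by $\tfrac{3\kappa^2}{6 + 2\kappa} \cdot \tfrac{\mathbb{E}(Z_n)}{K}$.

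The key remaining step is the variance estimate $\mathbb{V}(Z_n) \leq K \cdot \mathbb{E}(Z_n)$. This follows because the $Y_i$ are i.i.d.\ and non-negative with $Y_1 \leq K$: indeed $\mathbb{V}(Z_n) = n\mathbb{V}(Y_1) \leq n\mathbb{E}(Y_1^2) \leq n K \mathbb{E}(Y_1) = K \mathbb{E}(Z_n)$. Substituting this bound into the denominator, I would estimate
\[
2\mathbb{V}(Z_n) + \tfrac{2}{3} K \kappa \mathbb{E}(Z_n) \leq 2 K \mathbb{E}(Z_n) + \tfrac{2}{3} K \kappa \mathbb{E}(Z_n) = \tfrac{2}{3} K \mathbb{E}(Z_n)(3 + \kappa),
\]
so that the exponent is at least
\[
\frac{\kappa^2 \mathbb{E}(Z_n)^2}{\tfrac{2}{3} K \mathbb{E}(Z_n)(3 + \kappa)} = \frac{3\kappa^2}{2(3 + \kappa)} \cdot \frac{\mathbb{E}(Z_n)}{K} = \frac{3\kappa^2}{6 + 2\kappa} \cdot \frac{\mathbb{E}(Z_n)}{K},
\]
which is exactly the bound claimed. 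Since the exponent in Lemma \ref{Bernstein} appears with a minus sign and $\exp$ is increasing, replacing the exponent by the smaller quantity only increases the right-hand side, so the stated inequality follows.

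I do not expect any genuine obstacle here; the only point requiring a moment of care is the direction of the inequality when passing from the denominator bound to the exponent bound (a larger denominator gives a smaller exponent, hence a larger value of $\exp(-\cdot)$, which is what we need for an upper bound on the probability). The i.i.d.\ hypothesis is used both to write $\mathbb{V}(Z_n) = n\mathbb{V}(Y_1)$ and $\mathbb{E}(Z_n) = n\mathbb{E}(Y_1)$, so that their ratio is independent of $n$; non-negativity of $Y_1$ together with $Y_1 \leq K$ is what gives $\mathbb{E}(Y_1^2) \leq K \mathbb{E}(Y_1)$.
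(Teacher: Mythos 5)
Your proof is correct and follows essentially the same route as the paper: apply Lemma \ref{Bernstein} with $M=K$ and $t=\kappa\,\mathbb{E}(Z_n)$, prove $\mathbb{V}(Z_n)\leq K\cdot\mathbb{E}(Z_n)$ from non-negativity and the bound $Y_1\leq K$ (the paper phrases this via $\int_0^K x^2\,\mathrm{d}F<K\int_0^K x\,\mathrm{d}F$, which is identical in content to your $\mathbb{E}(Y_1^2)\leq K\,\mathbb{E}(Y_1)$), and then simplify the exponent.
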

\begin{proof} 
First note that we may chose $M=K$ in Lemma \ref{Bernstein}
to obtain
\[
\max_{1\leq i\leq n}\left|Y_{i}-\mathbb{E}\left(Y_{i}\right)\right|\leq K=M.
\]
Since
\begin{align*}
\mathbb{V}\left(Z_{n}\right) & = n\left(\int_{0}^{K}x^{2}\mathrm{d}F\left(x\right)-\left(\int_{0}^{K}x\mathrm{d}F\left(x\right)\right)^{2}\right)<n\int_{0}^{K}x^{2}\mathrm{d}F\left(x\right)\\
 & <  n\cdot K\int_{0}^{K}x\mathrm{d}F\left(x\right)=K\cdot\mathbb{E}\left(Z_{n}\right),
\end{align*}
it follows by Lemma \ref{Bernstein} that
\begin{align*}
\mathbb{P}\left(\max_{k\leq n}\left|Z_{k}-\mathbb{E}\left(Z_{k}\right)\right|\geq\kappa\cdot\mathbb{E}\left(Z_{n}\right)\right) & \leq2\exp\left(-\frac{\kappa^{2}\cdot\mathbb{E}\left(Z_{n}\right)^{2}}{2\mathbb{V}\left(Z_{n}\right)+\frac{2}{3}\kappa\cdot K\cdot\mathbb{E}\left(Z_{n}\right)}\right)\\
 & <2\exp\left(-\frac{\kappa^{2}}{2+\frac{2}{3}\kappa}\cdot\frac{\mathbb{E}\left(Z_{n}\right)}{K}\right)\\
 & =2\exp\left(-\frac{3\kappa^{2}}{6+2\kappa}\cdot\frac{\mathbb{E}\left(Z_{n}\right)}{K}\right).
\end{align*}
\end{proof}

Now we are able to start with the proof of Lemma \ref{lem: Sn* allg}.
\begin{proof}[Proof of Lemma \ref{lem: Sn* allg}]
The proof can be summarized into two main steps. 
First we fix $\epsilon>0$ and find sets $(A_i)_{i\in I}$ and a corresponding index set $I$ such that there exists $M\in\N$ fulfilling
\begin{align*}
\MoveEqLeft\bigcup_{n\in\mathbb{N}_{\geq M}}\left\{\mathsf{T}_n^{2t_n}\eta-\mathbb{E}\left(\mathsf{T}_n^{2t_n}\eta\right)>\epsilon\cdot \mathbb{E}\left(\mathsf{T}_n^{2t_n}\eta\right)\right\}\notag\\
&\cup\bigcup_{n\in\mathbb{N}_{\geq M}}\left\{\mathbb{E}\left(\mathsf{T}_n^{2t_n}\eta\right)-\mathsf{T}_n^{2t_n}\eta>\epsilon\cdot \mathbb{E}\left(\mathsf{T}_n^{2t_n}\eta\right)\right\}
\subset \bigcup_{i\in I}A_i.
\end{align*}
Afterwards we calculate $\lambda\left(A_i\right)$ for each $i\in I$ and show that 
$\sum_{i\in I}\lambda\left(A_i\right)<\infty$. Applying then the first Borel-Cantelli lemma gives 
the statement of the lemma.

We start by introducing the following notation:
\begin{align*}
 \overline{v}_m^{r}&\coloneqq v_m^{r}-\int v_m^{r}\mathrm{d}\lambda\,\,\,\,\,\,\text{ and }\,\,\,\,\,\,
 \overline{w}_m^{r}\coloneqq w_m^{r}-\int w_m^{r}\mathrm{d}\lambda.
\end{align*}

Using the second inequality of Lemma \ref{lem: rel w eta ell} we first obtain the following inclusion 
\begin{align}
 \MoveEqLeft\left\{\mathsf{T}_n^{2t_n}\eta-\mathbb{E}\left(\mathsf{T}_n^{2t_n}\eta\right)>\epsilon\cdot \mathbb{E}\left(\mathsf{T}_n^{2t_n}\eta\right)\right\}\notag\\
 &\subset \left\{\mathsf{T}_n^{2t_n}v_m-\mathbb{E}\left(\mathsf{T}_n^{2t_n}\eta\right)>\epsilon\cdot \mathbb{E}\left(\mathsf{T}_n^{2t_n}\eta\right)\right\}\notag\\
 &\subset \left\{\mathsf{T}_n^{2t_n}\overline{v}_m>\epsilon\cdot \mathbb{E}\left(\mathsf{T}_n^{2t_n}v_m\right)-\left(1+\epsilon\right)\cdot \left(\mathbb{E}\left(\mathsf{T}_n^{2t_n}v_m\right)-\mathbb{E}\left(\mathsf{T}_n^{2t_n}\eta\right)\right)\right\}.\label{eq: Teta - ETeta}
\end{align}
Combining Lemma \ref{lem: rel w eta ell}, Lemma \ref{lem: EW vm}, and Lemma \ref{lem: EW wm} 
yields that for all $\widetilde{\epsilon}>0$ there exist $M,N\in\N$ such that, for all $m\geq M$ and $n\geq N$,
\begin{align}
 \mathbb{E}\left(\mathsf{T}_n^{2t_n}v_m\right)-\mathbb{E}\left(\mathsf{T}_n^{2t_n}\eta\right)
 &\leq \mathbb{E}\left(\mathsf{T}_n^{2t_n}v_m\right)-\mathbb{E}\left(\mathsf{T}_n^{2t_n}w_m\right)
 \leq 2\widetilde{\epsilon}\cdot\mathbb{E}\left(\mathsf{T}_n^{2t_n}v_m\right).\label{eq: Ev-Ew}
\end{align}
Setting $\widetilde{\epsilon}=\epsilon/6$ yields, for $\epsilon\in(0,1)$, 
\begin{align*}
 \epsilon\cdot \mathbb{E}\left(\mathsf{T}_n^{2t_n}v_m\right)-\left(1+\epsilon\right)\cdot \left(\mathbb{E}\left(\mathsf{T}_n^{2t_n}v_m\right)-\mathbb{E}\left(\mathsf{T}_n^{2t_n}\eta\right)\right)
 &\geq \left(\epsilon-\left(1+\epsilon\right)\cdot 2\cdot \frac{\epsilon}{6}\right)\cdot \mathbb{E}\left(\mathsf{T}_n^{2t_n}v_m\right)\\
 &\geq \frac{\epsilon}{3}\cdot \mathbb{E}\left(\mathsf{T}_n^{2t_n}v_m\right).
\end{align*}
Inserting this in \eqref{eq: Teta - ETeta} yields
\begin{align}
\left\{\mathsf{T}_n^{2t_n}\eta-\mathbb{E}\left(\mathsf{T}_n^{2t_n}\eta\right)>\epsilon\cdot \mathbb{E}\left(\mathsf{T}_n^{2t_n}\eta\right)\right\}
&\subset\left\{\left|\mathsf{T}_n^{2t_n}\overline{v}_m\right|>\epsilon/3\cdot \mathbb{E}\left(\mathsf{T}_n^{2t_n}v_m\right)\right\},\label{eq: trunc sum estim 1a}
\end{align}
for $m,n$ sufficiently large.
Analogously, we obtain by the first inequality of Lemma \ref{lem: rel w eta ell}
\begin{align}
\MoveEqLeft\left\{\mathbb{E}\left(\mathsf{T}_n^{2t_n}\eta\right)-\mathsf{T}_n^{2t_n}\eta>\epsilon\cdot \mathbb{E}\left(\mathsf{T}_n^{2t_n}\eta\right)\right\}\notag\\
&\subset\left\{\mathbb{E}\left(\mathsf{T}_n^{2t_n}\eta\right)-\mathsf{T}_n^{2t_n}w_m>\epsilon\cdot \mathbb{E}\left(\mathsf{T}_n^{2t_n}w_m\right)\right\}\notag\\
&\subset\left\{\mathbb{E}\left(\mathsf{T}_n^{2t_n}w_m\right)-\mathsf{T}_n^{2t_n}w_m>\epsilon\cdot \mathbb{E}\left(\mathsf{T}_n^{2t_n}w_m\right)-\left(\mathbb{E}\left(\mathsf{T}_n^{2t_n}\eta\right)-\mathbb{E}\left(\mathsf{T}_n^{2t_n}w_m\right)\right)\right\}\notag\\
&\subset \left\{\left|\mathsf{T}_n^{2t_n}w_m-\mathbb{E}\left(\mathsf{T}_n^{2t_n}w_m\right)\right|>\epsilon/3\cdot \mathbb{E}\left(\mathsf{T}_n^{2t_n}w_m\right)\right\}\notag\\
&= \left\{\left|\mathsf{T}_n^{2t_n}\overline{w}_m\right|>\epsilon/3\cdot \mathbb{E}\left(\mathsf{T}_n^{2t_n}w_m\right)\right\},\label{eq: trunc sum estim 1b}
\end{align}
sufficiently large,
where the forth line follows from a similar calculation as in \eqref{eq: Ev-Ew}.

For the following let us always assume that $m$ is large enough that the above inclusions hold.
We first proceed with the estimation of \eqref{eq: trunc sum estim 1a}.
The estimation of \eqref{eq: trunc sum estim 1b} follows very much analogously, as we will see later on.

We define 
$I_{j}^m=[m^{j},m^{j+1}-1]\cap \mathbb{N}$ for $j,m\in\mathbb{N}$
(a generalisation of $I_j=I_j^2$ defined in \eqref{eq: Kk})
and for $n\in I_{j}^m$ we have
\begin{align*}
 \mathsf{T}_n^{2t_n}\overline{v}_m
 &=\sum_{u=0}^{m-1}\sum_{\ell=0}^{\gamma\left(n,u\right)}\overline{v}_m^{2t_n}\circ \tau_B^{\ell\cdot m+u},
\end{align*}
where $\gamma\left(n,u\right)$ can be uniquely determined and takes values in the interval $I_{j-1}^m$
if $n\in I_j^m$. 
However, given our following estimations there is no need to further investigate which exact value $\gamma\left(n,u\right)$ attains for given $n$ and $u$.
With this in mind we obtain the following inclusion
\begin{align}
 \left\{\left|\mathsf{T}_n^{2t_n}\overline{v}_m\right|>\epsilon/3\cdot \mathbb{E}\left(\mathsf{T}_n^{2t_n}v_m\right)\right\}
 &\subset \bigcup_{u=0}^{m-1} \left\{\left|\sum_{\ell=0}^{\gamma\left(n,u\right)}\overline{v}_m^{2t_n}\circ \tau_B^{\ell\cdot m+u}\right|>\frac{\epsilon}{3\cdot m}\cdot \mathbb{E}\left(\mathsf{T}_n^{2t_n}v_m\right)\right\}.\label{eq: trunc sum estim 2}
\end{align}
The reason we make this last estimate is that, by Lemma \ref{lem: indep of mth entry}, $\sum_{\ell=0}^{\gamma\left(n,u\right)}v_m^{2t_n}\circ \tau_B^{\ell\cdot m+u}$
is a sum of independent random variables. This will later facilitate to estimate the probability of the single events.

In the next steps we aim to combine all events for $n\in I_j^m$.
For doing so we notice that for $n\in I_{j}^m$ we have that
\begin{align*}
 \mathbb{E}\left(\mathsf{T}_n^{2t_n}v_m\right)
 &=n\cdot \mathbb{E}\left(v_m^{2t_n}\right)
 \geq m^j\cdot \mathbb{E}\left(v_m^{2t_n}\right)
 =\frac{1}{m}\cdot \mathbb{E}\left(\sum_{\ell=0}^{m^{j-1}-1}v_m^{2t_n}\circ \tau_B^{\ell\cdot m+u}\right).
\end{align*}
For the following we set $\epsilon_1=\epsilon/(3m)$.
Hence, $n\in I_{j}^m$ and thus $\gamma\left(n,u\right)\in I_{j-1}^m$ implies
\begin{align}
\MoveEqLeft\left\{\left|\sum_{\ell=0}^{\gamma\left(n,u\right)}\overline{v}_m^{2t_n}\circ \tau_B^{\ell\cdot m+u}\right|>\epsilon_1\cdot \mathbb{E}\left(\mathsf{T}_n^{2t_n}v_m\right)\right\}\notag\\
&\subset \left\{ \max_{k\in I_{j-1}^m}\left|\sum_{\ell=0}^{k}\overline{v}_m^{2t_n}\circ \tau_B^{\ell\cdot m+u}\right|>\epsilon_1\cdot \mathbb{E}\left(\sum_{\ell=0}^{m^{j-1}-1}v_m^{2t_n}\circ \tau_B^{\ell\cdot m+u}\right)\right\}.\label{eq: trunc sum estim 3}
\end{align}
In order to keep our notation short we introduce the set $\Gamma$
for an index set $J$, a non-negative integrable observable $\varphi$, and a transformation $\xi$ as
\begin{align*}
 \Gamma\left(J,\varphi, \xi\right)
 =\left\{ \max_{k\in J}\left|\sum_{\ell=0}^{k}\left(\varphi-\mathbb{E}\left(\varphi\right)\right)\circ \xi^{\ell}\right|
 >\epsilon_1\cdot \mathbb{E}\left(\sum_{\ell=0}^{\min\left\{k\in J\right\}-1}\varphi\circ \xi^{\ell}\right)\right\}.
\end{align*}
Then the righthand side of \eqref{eq: trunc sum estim 3}
writes as 
$\Gamma\left(I_{j-1}^m, v_m^{2t_n}\circ \tau_B^u, \tau_B^{m}\right)$.

For different $n\in I_j^m$ the term $2t_n$ takes different values. 
In the next steps we aim to tackle this problem in order to obtain a concise expression 
of the righthand side of \eqref{eq: trunc sum estim 3}
which only depends on $m$ and $j$.
Since $2t_n$ is monotonically increasing, we have that 
$2t_n\in\left[2t_{m^j}, 2t_{m^{j+1}}\right)$
if $n\in I_{j}^m$.

We set 
\begin{align*}
 r_j=\lfloor\log_2(2t_{m^j})\rfloor\,\,\,\text{ and }\,\,\,s_j=\lfloor\log_2(2t_{m^{j+1}})\rfloor.
\end{align*}
Note that there is a dependence on $m$ in $r_j$ and $s_j$ which we omit for brevity.
Keeping in mind that by its definition 
$v_m^{r}=v_m^{k}$ if $\lfloor\log_2r\rfloor=\lfloor\log_2k\rfloor$ and using the above
notation we obtain from \eqref{eq: trunc sum estim 3}
\begin{align}
\bigcup_{n\in I_j^m}\left\{\left|\sum_{\ell=0}^{\gamma\left(n,u\right)}\overline{v}_m^{2t_n}\circ \tau_B^{\ell\cdot m+u}\right|>\epsilon_1\cdot \mathbb{E}\left(\mathsf{T}_n^{2t_n}v_m\right)\right\}
&\subset \bigcup_{p =r_j}^{s_j} \Gamma\left(I_{j-1}^m, v_m^{2^{p}}\circ \tau_B^u, \tau_B^{m}\right).\label{eq: trunc sum estim 4} 
\end{align}

The reason we do this estimate is that in stead of considering $\# I_{j-1}^m=m^j-1-m^{j-1}$ summands
we later only consider $\#[r_j,s_j]\cap\N$ summands (estimated in \eqref{eq: sj-rj})
in the Borel-Cantelli sum yielding a better result.

Combining \eqref{eq: trunc sum estim 1a}
with \eqref{eq: trunc sum estim 2}, \eqref{eq: trunc sum estim 3}, and \eqref{eq: trunc sum estim 4}
gives the existence of $N,J\in \N$ such that
\begin{align}
 \bigcup_{n\in\mathbb{N}_{\geq N}}\left\{\mathsf{T}_n^{2t_n}\eta-\mathbb{E}\left(\mathsf{T}_n^{2t_n}\eta\right)>\epsilon\cdot \mathbb{E}\left(\mathsf{T}_n^{2t_n}\eta\right)\right\}
 &\subset \bigcup_{j\in\mathbb{N}_{\geq J}}\bigcup_{u=0}^{m}\bigcup_{p=r_j}^{s_j} \Gamma\left(I_{j-1}^m, v_m^{2^{p}}\circ \tau_B^u, \tau_B^{m}\right).\label{eq: T subset Gamma1}
\end{align}

The case starting with \eqref{eq: trunc sum estim 1b} can be done analogously resulting in  the existence of $N,J\in \N$ such that
\begin{align}
 \bigcup_{n\in\mathbb{N}_{\geq N}}\left\{\mathbb{E}\left(\mathsf{T}_n^{2t_n}\eta\right)-\mathsf{T}_n^{2t_n}\eta>\epsilon\cdot \mathbb{E}\left(\mathsf{T}_n^{2t_n}\eta\right)\right\}
 &\subset \bigcup_{j\in\mathbb{N}_{\geq J}}\bigcup_{u=0}^{m}\bigcup_{p=r_j}^{s_j} \Gamma\left(I_{j-1}^m, w_m^{2^{p}}\circ \tau_B^u, \tau_B^{m}\right).\label{eq: T subset Gamma2}
\end{align}

We now start the second part of the proof by estimating 
\begin{align*}
  \sum_{j=J}^{\infty}\sum_{u=0}^{m-1}\sum_{p=r_j}^{s_j}\left(\lambda\left( \Gamma\left(I_{j-1}^m, v_m^{2^{p}}\circ \tau_B^u, \tau_B^{m}\right)\right)
 +\lambda\left( \Gamma\left(I_{j-1}^m, w_m^{z_{p,q}}\circ \tau_B^u, \tau_B^{m}\right)\right)\right).
\end{align*}

As mentioned earlier in this section,
by Lemma \ref{lem: indep of mth entry},
$\sum_{\ell=0}^{k}\overline{v}_m^{2^p}\circ \tau_B^{\ell\cdot m+u}$
is a sum of independent
random variables and we can apply Lemma \ref{Bernstein 1}.
We note
\begin{align}
 v_m^{2^{p}}
 &\leq\max_{i\in\{0,\ldots, 2^{m-1}-1\}}y_{\lfloor\log_2 2^p\rfloor, i}
 =\max_{i\in\{0,\ldots, 2^{m-1}-1\}}y_{p, i}\notag\\
 &=\max_{i\in\{0,\ldots, 2^{m-1}-1\}}\frac{2^{p+m+1}}{2^m-i-1}
 =\frac{2^{p+m+1}}{2^m-(2^{m-1}-1)-1}
 =2^{p+2}.\label{eq: vm2p}
\end{align}
This yields, for all $u\in\N_0$,
\begin{align}
\MoveEqLeft\lambda\left(\Gamma\left(I_{j-1}^m, v_m^{2^p}\circ \tau_B^u, \tau_B^{m}\right)\right)\notag\\
&= \lambda\left(\left\{ \max_{k\in I_{j-1}^m}\left|\sum_{\ell=0}^{k}\overline{v}_m^{2^p}\circ \tau_B^{\ell\cdot m+u}\right|
 >\epsilon_1\cdot \mathbb{E}\left(\sum_{\ell=0}^{\min\left\{k\in I_{j-1}^m\right\}-1}v_m^{2^p}\circ \tau_B^{\ell\cdot m+u}\right)\right\}\right)\notag\\
&\leq \lambda\left(\left\{ \max_{k\leq m^{j-1}-1}\left|\sum_{\ell=0}^{k}\overline{v}_m^{2^p}\circ \tau_B^{\ell\cdot m+u}\right|
 >\epsilon_1\cdot \mathbb{E}\left(\sum_{\ell=0}^{m^j-1}v_m^{2^p}\circ \tau_B^{\ell\cdot m+u}\right)\right\}\right)\notag\\
&\leq 2\exp\left(-\epsilon_2\cdot m^{j-1}\cdot \frac{\mathbb{E}\left(v_m^{2^p}\right)}{2^{p+2}}\right)\label{eq: lambda < exp}
\end{align}
with
$\epsilon_2\coloneqq\left(3\epsilon_1^2\right)/\left(6+2\epsilon_1\right)$.
Further note that by Lemma \ref{lem: rel w eta ell} and \eqref{eq: vm2p} we have $w_m^{2^p}\leq v_m^{2^p}\leq 2^{p+2}$.
Thus, an analogous calculation as above yields
\begin{align}
\lambda\left(\Gamma\left(I_{j-1}^m, w_m^{2^p}\circ \tau_B^u, \tau_B^{m}\right)\right)
&\leq 2\exp\left(-\epsilon_2\cdot m^{j-1}\cdot \frac{\mathbb{E}\left(w_m^{2^p}\right)}{2^{p+2}}\right).\label{eq: lambda < exp1}
\end{align}
Next note that
by Lemma \ref{lem: EW wm},
for $\epsilon\in (0,2-1/\log 2]$, there exists $M,L$ such that, for $m\geq M$ and $2^{p}>L$,
\begin{align}
 \mathbb{E}\left(w_m^{2^p}\right)\geq (2-\epsilon)\log 2^p\geq p.\label{eq: Ewm geq}
\end{align}
Combining \eqref{eq: lambda < exp} and \eqref{eq: lambda < exp1}
with \eqref{eq: Ewm geq} 
and noting that, by Lemma \ref{lem: rel w eta ell}, $\mathbb{E}\left(v_m^{2^p}\right) \geq \mathbb{E}\left(w_m^{2^p}\right)$
yields
\begin{align}
 \lambda\left( \Gamma\left(I_{j-1}^m, v_m^{2^p}\circ \tau_B^u, \tau_B^{m}\right)\right)
 +\lambda\left( \Gamma\left(I_{j-1}^m, w_m^{2^p}\circ \tau_B^u, \tau_B^{m}\right)\right)
 \leq 4\exp\left(-\epsilon_2\cdot m^{j-1}\cdot \frac{p}{2^{p+2}}\right).\label{eq: sj estim}
\end{align}
Since $p/2^{p+2}$ is monotonically decreasing for $p\in [r_j,s_j]$ 
and $j$ sufficiently large, we have for $p\in [r_j,s_j]$
\begin{align}
\frac{p}{2^{p+2}}
&\geq\frac{s_j}{2^{s_j+2}}
 \geq \frac{\log_2 \left(2 t_{m^{j+1}}\right)}{2^{\log_2 \left(2 t_{m^{j+1}}\right)+3}}
 \geq \frac{\log_2 t_{m^{j+1}}}{2^4\cdot t_{m^{j+1}}}
 \geq\frac{(j+1)\log_2 m}{2^4\cdot m^{j+1}\cdot \left(\left(j+1\right)\log m\right)^{3/4}}
 \geq \frac{j^{1/4}}{2^4\cdot m^{j+1}}.\notag
\end{align}

Combining this with \eqref{eq: sj estim} yields, for all $p\in [r_j,s_j]\cap\N$,
\begin{align*}
 \MoveEqLeft \lambda\left( \Gamma\left(I_{j-1}^m, v_m^{2^p}\circ \tau_B^u, \tau_B^{m}\right)\right)
 +\lambda\left( \Gamma\left(I_{j-1}^m, w_m^{2^p}\circ \tau_B^u, \tau_B^{m}\right)\right)\notag\\
 &\leq 4\exp\left(-\epsilon_2\cdot m^{j-1}\cdot \frac{s_j}
 {2^{s_j+2}}\right)
 \leq 4\exp\left(-\epsilon_2\cdot m^{j-1}\cdot \frac{j^{1/4}}
 {2^4\cdot m^{j+1}}\right)\notag\\
 &= 4\exp\left(-\frac{\epsilon_2}{2^4\cdot m^2}\cdot j^{1/4}\right)
 \leq 4\exp\left(-j^{1/5}\right),
\end{align*}
for $j$ sufficiently large.

Furthermore,
\begin{align}
 s_j-r_j
 &=\lfloor\log_2(2t_{m^{j+1}})\rfloor-\lfloor\log_2(2t_{m^j})\rfloor
 \leq \log_2\left(\frac{2t_{m^{j+1}}}{2t_{m^j}}\right)+1\notag\\
 &=\log_2\left(\frac{m^{j+1}\cdot\left(\log m^{j+1}\right)^{3/4}}{m^j\cdot\left(\log m^j\right)^{3/4}}\right)+1
 \leq \log_2m+2,\label{eq: sj-rj}
\end{align}
for $j$ sufficiently large.
This implies 
\begin{align*}
 \MoveEqLeft \sum_{p=r_j}^{s_j}\left(\lambda\left( \Gamma\left(I_{j-1}^m, v_m^{2^p}\circ \tau_B^u, \tau_B^{m}\right)\right)
 +\lambda\left( \Gamma\left(I_{j-1}^m, w_m^{2^p}\circ \tau_B^u, \tau_B^{m}\right)\right)\right)\notag\\
 &\leq 4\sum_{p=r_j}^{s_j}\exp\left(-j^{1/5}\right)
 \leq  4\left(\log_2m+2\right)\exp\left(-j^{1/5}\right),
\end{align*}
for $j$ sufficiently large.

Since $\lambda\left( \Gamma\left(I_{j-1}^m, v_m^{2^p}\circ \tau_B^u, \tau_B^{m}\right)\right)$
and $\lambda\left( \Gamma\left(I_{j-1}^m, w_m^{2^p}\circ \tau_B^u, \tau_B^{m}\right)\right)$ do not differ for different $u\in\N_0$, see \eqref{eq: lambda < exp}, we obtain
\begin{align*}
 \MoveEqLeft \sum_{u=0}^{m-1}\sum_{p=r_j}^{s_j}\left(\lambda\left( \Gamma\left(I_{j-1}^m, v_m^{2^p}\circ \tau_B^u, \tau_B^{m}\right)\right)
 +\lambda\left( \Gamma\left(I_{j-1}^m, w_m^{2^p}\circ \tau_B^u, \tau_B^{m}\right)\right)\right)\notag\\
 &\leq 4m\left(\log_2m+2\right)\exp\left(-j^{1/5}\right),
\end{align*}
for $j$ sufficiently large.
Finally, if we choose $J$ sufficiently large, then
\begin{align*}
\MoveEqLeft\sum_{j=J}^{\infty}\sum_{u=0}^{m-1}\sum_{p=r_j}^{s_j}\left(\lambda\left( \Gamma\left(I_{j-1}^m, v_m^{2^p}\circ \tau_B^u, \tau_B^{m}\right)\right)
 +\lambda\left( \Gamma\left(I_{j-1}^m, w_m^{2^p}\circ \tau_B^u, \tau_B^{m}\right)\right)\right)\\
 &\leq \sum_{j=J}^{\infty}4m\left(\log_2m+2\right)\exp\left(-j^{1/5}\right)<\infty.
\end{align*}
An application of the first Borel-Cantelli lemma on \eqref{eq: T subset Gamma1} and \eqref{eq: T subset Gamma2} gives the statement of the lemma. 
\end{proof}

The next lemma gives a statement about average hitting times and will later give us the possibility to 
compare $\mathsf{T}_n^{2t_n}\eta$ with $T_n^{2t_n}$.

\begin{lemma}\label{lem: sum phi 2n}
We have 
\begin{align*}
 \lambda\left(\left|\sum_{k=0}^{n-1}\phi\circ \tau_B^k-2n\right|>n^{3/4}~\text{ i.o.}\right)=0.
\end{align*}
\end{lemma}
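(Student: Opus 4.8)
The plan is to use that, by Corollary \ref{cor: indep of 1st entry} and Lemma \ref{lem: lambda tauB invar}, the sequence $(\phi\circ\tau_B^{k})_{k\geq 0}$ consists of i.i.d.\ random variables. Indeed, from the proof of Lemma \ref{lem: lambda tauB invar} we have $\phi\equiv j+1$ on $J_j$, so $\phi$ is measurable with respect to $\mathcal{J}$ (restricting to $\Omega'$ as in Remark \ref{rem: Omega}); hence Corollary \ref{cor: indep of 1st entry} gives mutual independence, and $\tau_B$-invariance of $\lambda$ gives that each $\phi\circ\tau_B^{k}$ has the same law, namely $\lambda(\phi=j)=\lambda(J_{j-1})=2^{-j}$ for $j\geq 1$, with $\mathbb{E}(\phi)=\sum_{j\geq 1}j\,2^{-j}=2$. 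Thus $\sum_{k=0}^{n-1}\phi\circ\tau_B^{k}=\phi_n$, see \eqref{eq: def phi n}, is a sum of i.i.d.\ random variables with mean $2$, and the assertion is a moderate-deviation bound for this sum. Since $\phi$ is unbounded, Lemma \ref{Bernstein 1} does not apply directly, but $\phi$ has geometric tails, so a truncation argument will do.

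Concretely, I would fix a constant $C>2/\log 2$, set $K_n\coloneqq\lceil C\log n\rceil$, $\phi^{(K_n)}\coloneqq\phi\cdot\mathbbm{1}_{\{\phi\leq K_n\}}$, and $Z_n\coloneqq\sum_{k=0}^{n-1}\phi^{(K_n)}\circ\tau_B^{k}$. Two elementary estimates are needed. First, the truncation has no effect outside a small set:
\[
\lambda\big(\exists\,0\leq k<n\colon\ \phi\circ\tau_B^{k}>K_n\big)\ \leq\ n\cdot\lambda(\phi>K_n)\ \leq\ n\cdot 2^{-K_n}\ \leq\ n^{\,1-C\log 2},
\]
and $\sum_n n^{\,1-C\log 2}<\infty$ by the choice of $C$; on the complement of this event $\phi_n=Z_n$. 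Second, $|\mathbb{E}(Z_n)-2n|=n\sum_{j>K_n}j\,2^{-j}\to 0$, so $|\mathbb{E}(Z_n)-2n|\leq n^{3/4}/2$ and $\mathbb{E}(Z_n)\geq n$ for all large $n$.

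The main step is to apply Lemma \ref{Bernstein 1} to the i.i.d.\ non-negative variables $Y_i\coloneqq\phi^{(K_n)}\circ\tau_B^{i-1}\leq K_n$ with $\kappa=\kappa_n$ determined by $\kappa_n\mathbb{E}(Z_n)=n^{3/4}/2$; then $\kappa_n\asymp n^{-1/4}$, and Lemma \ref{Bernstein 1} yields
\[
\lambda\Big(\max_{k\leq n}\big|Z_k-\mathbb{E}(Z_k)\big|\geq n^{3/4}/2\Big)\ \leq\ 2\exp\!\Big(-\frac{3\kappa_n^{2}}{6+2\kappa_n}\cdot\frac{\mathbb{E}(Z_n)}{K_n}\Big)\ \leq\ 2\exp\!\Big(-c\,\frac{\sqrt n}{\log n}\Big)
\]
for some $c>0$ and all large $n$, which is summable in $n$. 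Combining the displays, for $n$ large the event $\{|\phi_n-2n|>n^{3/4}\}$ is contained in the union of $\{\exists\,k<n\colon\phi\circ\tau_B^{k}>K_n\}$ and $\{\max_{k\leq n}|Z_k-\mathbb{E}(Z_k)|\geq n^{3/4}/2\}$, and both have $\lambda$-measures summable in $n$; the first Borel--Cantelli lemma then gives the statement. The only delicate point is the choice of the truncation level: $K_n$ must grow fast enough to make $n\,\lambda(\phi>K_n)$ summable, yet slowly enough (logarithmic growth suffices) that the exponent $\kappa_n^{2}\,\mathbb{E}(Z_n)/K_n$ in Lemma \ref{Bernstein 1} still tends to infinity faster than $\log n$.
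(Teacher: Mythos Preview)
Your proof is correct and follows essentially the same strategy as the paper: use the i.i.d.\ structure of $(\phi\circ\tau_B^k)_k$ from Corollary~\ref{cor: indep of 1st entry}, truncate $\phi$ at a logarithmic level, apply a Bernstein-type inequality to the truncated sum, and conclude by Borel--Cantelli.

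The one noteworthy difference is in the handling of the truncation event. The paper truncates at $2\log n$ and splits $\Upsilon_n\subset(\Upsilon_n\cap\Xi_n)\cup\Xi_n^c$, where $\Xi_n^c=\{\exists\,k\leq n:\phi\circ\tau_B^{k-1}>2\log n\}$. With this constant the crude union bound $n\cdot 2^{-2\log n}=n^{1-2\log 2}$ is \emph{not} summable, so the paper instead shows $\lambda(\limsup\Xi_n^c)=0$ by translating $\{\phi>2\log n\}$ into a statement about $\beta_k$ and invoking Lemma~\ref{lem: Xi> io 0}. Your choice $K_n=\lceil C\log n\rceil$ with $C>2/\log 2$ makes $n\cdot 2^{-K_n}\leq n^{1-C\log 2}$ summable directly, which is slightly more self-contained and avoids the appeal to Lemma~\ref{lem: Xi> io 0}. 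The paper also applies the general Bernstein inequality (Lemma~\ref{Bernstein}) with an explicit variance computation, whereas you use the packaged version Lemma~\ref{Bernstein 1}; the resulting exponents are of the same order.
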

\begin{proof}
First note, that since $\phi$ is measurable on $\mathcal{J}$, by Corollary \ref{cor: indep of 1st entry},
the sequence of random variables $\left(\phi\circ\tau_B^k\right)_{k\in\mathbb{N}}$ is independent.
We define the following sets
\begin{align*}
 \Upsilon_n&= \left\{\left|\sum_{k=0}^{n-1}\phi\circ \tau_B^kx-2n\right|>n^{3/4}\right\}\,\,\,\,\,\text{ and }\,\,\,\,\,
 \Xi_n= \left\{\#\left\{k\leq n\colon \phi\circ \tau_{B}^{k-1}>2\log n\right\}=0\right\}.
\end{align*}
If we denote by $A^c$ the complementary event of an event $A$,
then we have
\begin{align}
\lambda\left(\limsup_{n\to\infty}\Upsilon_n\right)
 &\leq \lambda\left(\limsup_{n\to\infty}\left(\left(\Upsilon_n\cap\Xi_n\right)\cup\Xi_n^c\right)\right)
 =\lambda\left(\limsup_{n\to\infty}\left(\Upsilon_n\cap\Xi_n\right)\cup\limsup_{n\to\infty}\Xi_n^c\right)\notag\\
 &\leq \lambda\left(\limsup_{n\to\infty}\left(\Upsilon_n\cap\Xi_n\right)\right)+\lambda\left(\limsup_{n\to\infty}\Xi_n^c\right).\label{eq: summand number>1}
\end{align}
In order to estimate the first summand of \eqref{eq: summand number>1}
we set $\phi^{r}=\phi\cdot\mathbbm{1}_{\left\{\phi\leq r\right\}}$. 
Then we have
\begin{align}
 \Upsilon_n\cap\Xi_n
 &\subset \left\{\left|\sum_{k=0}^{n-1}\phi^{2\log n}\circ \tau_B^k-2n\right|>n^{3/4}\right\}\notag\\
 &= \left\{\left|\sum_{k=0}^{n-1}\left(\phi^{2\log n}-\mathbb{E}\left(\phi^{2\log n}\right)\right)\circ \tau_B^k-\left(2n-n\cdot \mathbb{E}\left(\phi^{2\log n}\right)\right)\right|>n^{3/4}\right\}\notag\\
 &\subset \left\{\left|\sum_{k=0}^{n-1}\left(\phi^{2\log n}-\mathbb{E}\left(\phi^{2\log n}\right)\right)\circ \tau_B^k\right|>n^{3/4}-\left(2n-n\cdot \mathbb{E}\left(\phi^{2\log n}\right)\right)\right\}.\label{eq: Ups Xi}
\end{align}
We aim to apply Lemma \ref{Bernstein} for which we need to calculate 
$\gamma_n=\left(2-\mathbb{E}\left(\phi^{2\log n}\right)\right)\cdot n$ first.
Obviously, 
\[
 \mathbb{E}\left(\phi^{2\log n}\right)
 = \sum_{k=1}^{\left\lfloor2\log n\right\rfloor}k\cdot \lambda\left(\phi=k\right)
 = \sum_{k=1}^{\left\lfloor2\log n\right\rfloor}k\cdot 2^{-k}.
\]
This can be easily seen, as $\phi(x)=k$ if $x\in J_{k-1}$ and $\lambda\left(J_{k-1}\right)=2^{-k}$.
Remember that $\sum_{k=1}^{\infty}k/2^k=2$. 
This gives
\begin{align*}
2-\mathbb{E}\left(\phi^{2\log n}\right)
 &=2-\sum_{k=1}^{\left\lfloor2\log n\right\rfloor}k\cdot 2^{-k}
 =\sum_{k=\left\lfloor2\log n\right\rfloor+1}^{\infty}k\cdot 2^{-k}.
\end{align*}

Calculating the remainder term with $j=\left\lfloor2\log n\right\rfloor+1$ 
and applying the geometric series formula gives 
\begin{align*}
 2-\mathbb{E}\left(\phi^{2\log n}\right)
 &=\sum_{k=j}^{\infty}\frac{k}{2^k}
 =j\cdot\sum_{k=j}^{\infty}\frac{1}{2^k}+\sum_{k=j}^{\infty}\frac{k-j}{2^k}
 =j\cdot 2^{-j+1}+2^{-j}\sum_{k=0}^{\infty}\frac{k}{2^{k}}\\
 &=j\cdot 2^{-j+1}+2^{-j+1}
 =\frac{\left\lfloor2\log n\right\rfloor+2}{2^{\left\lfloor2\log n\right\rfloor}}.
\end{align*}
This yields 
\begin{align*}
 \gamma_n
 \leq \frac{2\cdot\left(2\log n +2\right)}{n^{2\cdot \log 2}}\cdot n
 =4\cdot\left(\log n +1\right)\cdot n^{1-2\log 2}\leq n^{1/2},
\end{align*}
for $n$ sufficiently large.
Thus,
$n^{3/4}-\gamma_n \geq n^{5/8}$,
for $n$ sufficiently large.
Combining this with \eqref{eq: Ups Xi} and applying Lemma \ref{Bernstein} yields
\begin{align}
\lambda\left(\Upsilon_n\cap\Xi_n\right)
 &\leq \lambda\left(\left|\sum_{k=0}^{n-1}\left(\phi^{2\log n}-\mathbb{E}\left(\phi^{2\log n}\right)\right)\circ \tau_B^k\right|>n^{5/8}\right)\notag\\
 &\leq  \exp\left(-\frac{n^{5/4}}{2\mathbb{V}\left(\sum_{k=0}^{n-1}\phi^{2\log n}\circ \tau_B^k\right)+\frac{2}{3}\cdot n^{5/8}\cdot 2\log n}\right),\label{eq: lambda sum phi bernstein}
\end{align}
for $n$ sufficiently large.
We have, using independence of $\phi\circ \tau_B^k$
and a similar approach as in the calculation of $\mathbb{E}\left(\phi^{2\log n}\right)$,
that 
\begin{align*}
\mathbb{V}\left(\sum_{k=0}^{n-1}\phi^{2\log n}\circ \tau_B^k\right)
&=n\cdot \mathbb{V}\left(\phi^{2\log n}\right)
\leq n\cdot \int \left(\phi^{2\log n}\right)^2\mathrm{d}\lambda
=n\cdot \sum_{k=1}^{\left\lfloor2\log n\right\rfloor}k^2/2^k\\
&\leq n\cdot \sum_{k=1}^{\infty}k^2/2^k=6n.
\end{align*}
Applying this on \eqref{eq: lambda sum phi bernstein} gives 
\begin{align*}
 \lambda\left(\Upsilon_n\cap\Xi_n\right)
 &\leq  \exp\left(-\frac{n^{5/4}}{12n+\frac{2}{3}\cdot n^{5/8}\cdot 2\log n}\right)
 \leq  \exp\left(-\frac{n^{5/4}}{13 n}\right)
 \leq  \exp\left(-n^{1/8}\right),
\end{align*}
for $n$ sufficiently large.
Since $\sum_{n=1}^{\infty}\exp\left(-n^{1/8}\right)<\infty$,
applying the first Borel-Cantelli lemma 
yields
\begin{align}
\lambda\left(\Upsilon_n\cap \Xi_n~\text{ i.o.}\right)=0.\label{eq: Ups Xi io 0}
\end{align}

In the next steps we calculate the second summand of \eqref{eq: summand number>1}. We have, for $x\in\Omega'$, that
$\phi\circ \tau_{B}^{k-1}(x)>2\log n$ is equivalent to 
$\tau_{B}^{k-1}(x)\in\bigcup_{j=\lfloor 2\log n\rfloor-1}^{\infty}J_{j}$,
see the proof of Lemma \ref{lem: lambda tauB invar},
and this is equivalent to $\beta_k(x)\geq 2^{\lfloor 2\log n\rfloor-1}$.
As $n^{2\log 2}/4\leq 2^{\lfloor 2\log n\rfloor-1}$, we obtain
\begin{align*}
 \Xi_n^c
 &\subset\left\{\#\left\{k\leq n\colon \beta_k\geq n^{2\log 2}/4\right\}\geq 1\right\}
 =\left\{\#\left\{k\leq n\colon \beta_k\geq n\cdot n^{2\log 2-1}/4\right\}\geq 1\right\}\\
 &\subset \left\{\#\left\{k\leq n\colon \beta_k\geq n\cdot \mathrm{e}^{\left\lfloor \log n\right\rfloor\cdot\left(2\log 2-1\right)}/4\right\}\geq 1\right\}.
\end{align*}
If we set $\psi\left(n\right)= \mathrm{e}^{n\cdot\left(2\log 2-1\right)}/4$,
then $\psi\left(\left\lfloor\log n\right\rfloor\right)= \mathrm{e}^{\left\lfloor \log n\right\rfloor\cdot\left(2\log 2-1\right)}$
and $\psi\in\Psi$. Hence, we can apply Lemma \ref{lem: Xi> io 0} which yields that the second summand of \eqref{eq: summand number>1} equals zero.

Combining this with \eqref{eq: Ups Xi io 0} and \eqref{eq: summand number>1} yields the statement of the lemma.
\end{proof}

\begin{proof}[Proof of Lemma \ref{lem: Tn tn a.s.}]
We remember the definition of $\phi_n$ in \eqref{eq: def phi n}.
The strategy is to compare $\sum_{k=1}^n \eta^{2t_n}\circ \tau_B^k$ with $T_{\ell}^{t_n}$ with $\ell$ to be determined
and finally use Lemma \ref{lem: Sn* allg} to obtain the statement of the lemma.

Lemma \ref{lem: sum phi 2n} implies that we have eventually almost surely
\begin{align*}
2n-n^{3/4}\leq \phi_n\leq 2n+n^{3/4}.
\end{align*}
This yields that we have for every $\epsilon\in \left(0,1/2\right)$ eventually almost surely
\begin{align*}
\phi_{\left\lfloor n\left(1/2-\epsilon\right)\right\rfloor}
\leq n\cdot \left(1-2\epsilon\right)+\left(n\cdot \left(1/2-\epsilon\right)\right)^{3/4}
\leq n
\end{align*}
and 
\begin{align*}
 \phi_{\left\lfloor n\left(1/2+\epsilon\right)\right\rfloor}
 \geq 2\left\lfloor n\cdot \left(1/2+\epsilon\right)\right\rfloor-\left(\left\lfloor n\cdot \left(1/2+\epsilon\right)\right\rfloor\right)^{3/4}
 \geq n.
\end{align*}
Thus, we have for every $\epsilon\in(0,1/2)$ eventually almost surely
\begin{align*}
T_{\phi_{\left\lfloor n\left(1/2-\epsilon\right)\right\rfloor}}^{t_{n}}
\leq T_n^{t_n}
\leq T_{\phi_{\left\lfloor n\left(1/2+\epsilon\right)\right\rfloor}}^{t_n}.
\end{align*}
An easy calculation shows, for all $\epsilon\in(0,1/2)$, 
that $2t_{\left\lfloor n\left(1/2-\epsilon\right)\right\rfloor}\leq t_n\leq 2t_{\left\lfloor n\left(1/2+\epsilon\right)\right\rfloor}$ for $n$ sufficiently large. 
This implies 
\begin{align*}
T_{\phi_{\left\lfloor n\left(1/2-\epsilon\right)\right\rfloor}}^{2t_{\left\lfloor n\left(1/2-\epsilon\right)\right\rfloor}}
\leq T_n^{t_n}
\leq T_{\phi_{\left\lfloor n\left(1/2+\epsilon\right)\right\rfloor}}^{2t_{\left\lfloor n\left(1/2+\epsilon\right)\right\rfloor}}
\end{align*}
eventually almost surely.
We remember that 
\[
T^{r}_{\phi_n}
=\sum_{k=0}^{\phi(n)-1}\chi^{r}\circ \tau^k
=\sum_{k=0}^{n-1} \eta^{r}\circ \tau_B^k
=\mathsf{T}_n^{r}\eta, 
\]
which implies
\begin{align*}
\mathsf{T}_{\left\lfloor n\left(1/2-\epsilon\right)\right\rfloor}^{2t_{\left\lfloor n\left(1/2-\epsilon\right)\right\rfloor}}\eta
\leq T_n^{t_n}
\leq \mathsf{T}_{\left\lfloor n\left(1/2+\epsilon\right)\right\rfloor}^{2t_{\left\lfloor n\left(1/2+\epsilon\right)\right\rfloor}}\eta
\end{align*}
eventually almost surely.
Using Lemma \ref{lem: Sn* allg} implies, for all $\epsilon\in(0,1/2)$, eventually almost surely
\begin{align*}
\mathbb{E}\left(\mathsf{T}_{\left\lfloor n\left(1/2-\epsilon\right)\right\rfloor}^{2t_{\left\lfloor n\left(1/2-\epsilon\right)\right\rfloor}}\eta\right)\cdot\left(1-\epsilon\right)
\leq T_n^{t_n}
\leq \mathbb{E}\left(\mathsf{T}_{\left\lfloor n\left(1/2+\epsilon\right)\right\rfloor}^{2t_{\left\lfloor n\left(1/2+\epsilon\right)\right\rfloor}}\eta\right)\cdot\left(1+\epsilon\right).
\end{align*}
Furthermore, since by Lemma \ref{lem: lambda tauB invar} $\lambda$ is $\tau_B$-invariant,
Lemma \ref{lem: rel w eta ell} implies
that we have for every $\epsilon\in(0,1/2)$ and $m\in\mathbb{N}$ sufficiently large eventually almost surely
\begin{align*}
 T_n^{t_n}
 &\geq \left\lfloor n\left(1/2-\epsilon\right)\right\rfloor\cdot \mathbb{E}\left(\eta^{2t_{\left\lfloor n\left(1/2-\epsilon\right)\right\rfloor}}\right)\cdot\left(1-\epsilon\right)
 \geq  n\cdot \mathbb{E}\left(w_m^{2t_{\left\lfloor n\left(1/2-\epsilon\right)\right\rfloor}}\right)\cdot\left(1/2-2\epsilon\right).
\end{align*}
Choosing $m$ sufficiently large and applying Lemma \ref{lem: EW wm} for every $\epsilon\in(0,1/2)$ eventually almost surely
\begin{align}
 T_n^{t_n}
 &\geq n\cdot\log (2t_{\left\lfloor n\left(1/2-\epsilon\right)\right\rfloor})\cdot \left(1-3\epsilon\right)
 \geq n\cdot\log \left(n\left(1-2\epsilon\right)\right)\cdot \left(1-3\epsilon\right)
 \geq n\cdot\log n\cdot \left(1-4\epsilon\right).\label{eq: Tntn>}
\end{align}
On the other hand, by an analogous combination 
of Lemma \ref{lem: rel w eta ell}
and Lemma \ref{lem: EW vm},
we have for every $\epsilon\in(0,1/2)$ eventually almost surely
\begin{align}
 T_n^{t_n}
 &\leq \left\lfloor n\left(1/2+\epsilon\right)\right\rfloor\cdot \mathbb{E}\left(\eta^{2t_{\left\lfloor n\left(1/2+\epsilon\right)\right\rfloor}}\right)\cdot\left(1+\epsilon\right)
 \leq n\cdot \mathbb{E}\left(v_m^{2t_{\left\lfloor n\left(1/2-\epsilon\right)\right\rfloor}}\right)\cdot\left(1/2+2\epsilon\right)\notag\\
 &\leq n\cdot\log (2t_{\left\lfloor n\left(1/2+\epsilon\right)\right\rfloor})\cdot \left(2+\epsilon\right)\cdot\left(1/2+2\epsilon\right)
 \leq n\cdot\log (2t_{\left\lfloor n\left(1/2+\epsilon\right)\right\rfloor})\cdot \left(1+6\epsilon\right)\notag\\
 &\leq n\cdot\log n^{1+\epsilon}\cdot \left(1+7\epsilon\right)
 \leq n\cdot\log n\cdot\left(1+12\epsilon\right).\label{eq: Tntn<}
\end{align}
Combining \eqref{eq: Tntn>} and \eqref{eq: Tntn<} gives the statement of the lemma.
\end{proof}

\section{Proof of the weak convergence Theorem \ref{thm: weak conv}}\label{sec: proof weak conv}
\subsection{Mixing properties of the digits \texorpdfstring{$(a_n)$}{an}}\label{subsec: mixin prop}
In contrast to the proof of Theorem \ref{thm: if part} 
we don't use the independence properties of the induced transformation but show that 
that the digits $(a_n)$ are $\bm{\alpha}$-mixing which enables us to prove Theorem \ref{thm: weak conv}.
So we will first give the definition of $\bm{\alpha}$-mixing random variables. 
\begin{Def}
Let $\left(\Omega',\mathcal{A},\mathbb{P}\right)$ be a probability measure space and $\mathcal{B},\mathcal{C}\subset\mathcal{A}$ two $\sigma$-fields, then the following measure of dependence is defined.
\begin{align*}
\bm{\alpha}\left(\mathcal{B},\mathcal{C}\right)&=\sup_{B,C}\left|\mathbb{P}\left(B\cap C\right)-\mathbb{P}\left(B\right)\cdot \mathbb{P}\left(C\right)\right|&&B\in\mathcal{B}, C\in\mathcal{C}.
\end{align*}

Furthermore, let $\left(X_{n}\right)_{n\in\mathbb{N}}$ be a (not necessarily stationary) sequence of random variables. For $-\infty\leq J\leq L\leq\infty$ we can define a $\sigma$-field by
\begin{align*}
\mathcal{A}_{J}^{L}=\sigma\left(X_{k},J\leq k\leq L, k\in\mathbb{Z}\right).
\end{align*}
With that the dependence coefficient is defined by
\begin{align}
\bm{\alpha}\left(n\right)&=\sup_{k\in\mathbb{N}}\bm{\alpha}\left(\mathcal{A}_{1}^{k},\mathcal{A}_{k+n}^{\infty}\right),\notag
\end{align}
The sequence $\left(X_{n}\right)$ is said to be $\bm{\alpha}$-mixing if $\lim_{n\to\infty}\bm{\alpha}\left(n\right)= 0$.
\end{Def}
For further properties of mixing random variables see \cite{bradley_basic_2005}.

\begin{lemma}\label{lem: Xn alpha mixing}
The sequence $\left(a_n\right)$ is $\bm{\alpha}$-mixing.
\end{lemma}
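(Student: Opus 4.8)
The plan is to pass to the binary coding $x=\sum_{i\ge 1}\omega_i(x)\,2^{-i}$, under which $\tau$ becomes the left shift and, with respect to $\lambda$, the digits $\omega_i$ are i.i.d.\ fair Bernoulli variables. Since $\tau^{j-1}x=0.\omega_j\omega_{j+1}\dots$ and $a_j=\chi\circ\tau^{j-1}$, each $a_j$ is a function of $(\omega_j,\omega_{j+1},\dots)$ only; in particular $\mathcal{A}_{k+n}^{\infty}=\sigma(a_{k+n},a_{k+n+1},\dots)\subset\sigma(\omega_i\colon i\ge k+n)$, which by independence of the coordinates is independent of $\mathcal{D}_N\coloneqq\sigma(\omega_1,\dots,\omega_N)$ for every $N\le k+n-1$. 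So it suffices to show that every $B\in\mathcal{A}_1^k$ differs, in $\lambda$-measure, from a set in $\mathcal{D}_{k+n-1}$ by an amount that tends to $0$ as $n\to\infty$ \emph{uniformly in} $k$.

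This reduces to the following approximation statement, which I regard as the heart of the matter: there is a universal constant $\kappa$ such that for all $k,n\ge 1$ there is a set $E_{k,n}\in\mathcal{D}_{k+n-1}$ with $\lambda(E_{k,n})\le\kappa\,2^{-n/2}$ on whose complement each of $a_1,\dots,a_k$ agrees with a $\mathcal{D}_{k+n-1}$-measurable function. I would prove it by analysing, for fixed $j$ and a precision parameter $p$, when $\chi\circ\tau^{j-1}$ is already pinned down by $\omega_j,\dots,\omega_{j+p-1}$, i.e.\ by the dyadic interval of generation $p$ containing $\tau^{j-1}x$. This can fail only if that interval straddles one of the jump points $1/r$ ($r\ge 2$) of $\chi$, or if $\tau^{j-1}x<2^{-\lfloor p/2\rfloor}$, i.e.\ $\tau^{j-1}x$ begins with a run of zeros so long that $\chi\circ\tau^{j-1}(x)\ge 2^{\lfloor p/2\rfloor}$ and genuinely needs more than $p$ bits. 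Because the jump points $1/r$ near a value $y$ are spaced $\asymp y^2$ apart, only $O(2^{p/2})$ of them — those lying in $[2^{-\lfloor p/2\rfloor},1)$ — can fall inside a generation-$p$ interval, so the union of these offending intervals together with $[0,2^{-\lfloor p/2\rfloor})$ has measure $O(2^{-p/2})$; enlarging it to a finite union of generation-$(j+p-1)$ dyadic intervals (which is $\mathcal{D}_{j+p-1}$-measurable and, by $\tau$-invariance of $\lambda$, still of measure $O(2^{-p/2})$) yields a set $E_j^{(p)}\in\mathcal{D}_{j+p-1}$ off which $a_j$ is $\mathcal{D}_{j+p-1}$-measurable. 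Applying this with $p=p_j\coloneqq k+n-j$, so that $j+p_j-1=k+n-1$, and setting $E_{k,n}\coloneqq\bigcup_{j=1}^{k}E_j^{(p_j)}\in\mathcal{D}_{k+n-1}$, we get $\lambda(E_{k,n})\le\sum_{j=1}^{k}O\!\left(2^{-(k+n-j)/2}\right)\le\sum_{i\ge n}O\!\left(2^{-i/2}\right)\le\kappa\,2^{-n/2}$ — a bound depending on $n$ only, which is exactly where the uniformity in $k$ enters.

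Granting the approximation statement, the conclusion is short. For $B\in\mathcal{A}_1^k$, restricting the generating tuple $(a_1,\dots,a_k)$ to $E_{k,n}^{c}$ replaces it by a $\mathcal{D}_{k+n-1}$-measurable tuple, and since also $E_{k,n}^{c}\in\mathcal{D}_{k+n-1}$ we get $B\cap E_{k,n}^{c}\in\mathcal{D}_{k+n-1}$. Hence for any $C\in\mathcal{A}_{k+n}^{\infty}$ the independence of $\mathcal{D}_{k+n-1}$ and $\mathcal{A}_{k+n}^{\infty}$ gives $\lambda\big((B\cap E_{k,n}^{c})\cap C\big)=\lambda(B\cap E_{k,n}^{c})\,\lambda(C)$, and therefore $\big|\lambda(B\cap C)-\lambda(B)\,\lambda(C)\big|\le\lambda(E_{k,n})\le\kappa\,2^{-n/2}$. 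Taking the supremum over $B$ and $C$ and then over $k$ yields $\bm{\alpha}(n)\le\kappa\,2^{-n/2}\to 0$.

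The coding dictionary and the last two-line estimate are routine; the step I expect to be the real obstacle is the uniform approximation statement, and more precisely getting a bound on $\lambda(E_{k,n})$ that does not depend on $k$. The naive device of replacing $a_j$ by $\min(a_j,R)$ with $R$ so large that $\lambda(\{a_j>R\})<\delta/k$ forces $R$, and hence the number of binary digits one must record, to grow with $k$, and destroys uniformity. The right mechanism is that the amount of information about $\omega_{k+n},\omega_{k+n+1},\dots$ carried by $a_j$ is governed by the event that $\tau^{j-1}x$ begins with a run of about $k+n-j$ zeros, whose probability decays geometrically in $k+n-j$, so that the relevant contributions sum to something controlled by $2^{-n/2}$ independently of $k$. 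Some care is also needed to absorb the ``$\tau^{j-1}x$ close to a jump point $1/r$'' set into a dyadic-measurable set without inflating its measure, and at the outset to record that the $\omega_i$ are genuinely i.i.d.\ and that $\sigma(a_{k+n},a_{k+n+1},\dots)$ depends only on coordinates from position $k+n$ on.
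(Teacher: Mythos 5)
Your proof is correct, and it takes a genuinely different route from the paper's. The paper proceeds via the transfer operator: it defines $\widehat{\tau}\zeta(x)=\tfrac12\bigl(\zeta(x/2)+\zeta((x+1)/2)\bigr)$, shows $\mathsf{V}(\widehat{\tau}\zeta)\le\tfrac12\mathsf{V}(\zeta)$, decomposes $BV=\mathbb{C}\mathbbm{1}\oplus H$, and deduces the decay-of-correlations estimate $\mathrm{Cor}_n(\varphi,\zeta)\le 2^{-n}\lVert\varphi\rVert_1\,\mathsf{V}(\zeta)$; the $\bm{\alpha}$-mixing bound then follows by applying this to $\mathbbm{1}_{\{a_k=i\}}$ for $i\le\lfloor 2^{n/2}\rfloor$ (each has variation $\le 2$) and absorbing the large-$i$ contribution into the tail $\lambda(a_k\ge\lfloor 2^{n/2}\rfloor)\le 2^{-n/2+1}$. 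You instead exploit the exact binary-shift structure of $\tau$: the binary digits $\omega_i$ are genuinely i.i.d.\ Bernoulli, so $\mathcal{A}_{k+n}^{\infty}$ lives in $\sigma(\omega_i:i\ge k+n)$, and the content of the lemma reduces to the assertion that $\sigma(a_1,\dots,a_k)$ is, up to a set of measure $O(2^{-n/2})$, contained in $\sigma(\omega_1,\dots,\omega_{k+n-1})$. Your counting of jump points $1/r$ and the residual interval $[0,2^{-\lfloor p/2\rfloor})$, with the per-index precision $p_j=k+n-j$ chosen so that the per-term error decays geometrically in $j$, correctly gives a bound independent of $k$; the final two-line independence argument is also sound. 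It is worth noting that the same $\sqrt{\phantom{n}}$-truncation appears in both proofs — the paper splits the sum at $\lfloor 2^{n/2}\rfloor$, and you truncate the jump points at $r\le 2^{\lfloor p/2\rfloor}$ — and is forced by balancing the density of jumps of $\chi$ against the measure of the residual. The paper's argument is more portable (it works for any piecewise expanding map of bounded variation, independent of any special coding), whereas yours is more elementary and avoids the transfer-operator machinery, at the cost of being tied to the exact Bernoulli shift structure of the doubling map.
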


\begin{proof}
The proof is based on a decay of correlation argument for the transfer operator going back to classical results,
see for example \cite{baladi_positive_2000}.
Since the proof in this case is reasonably short we redo it as a special case.
In order to proceed we first need the notion of bounded variation.
\begin{Def}
For $\varphi:\left[0,1\right)\to\mathbb{R}_{\geq 0}$ the variation $\mathsf{V}\left(\varphi\right)$ is given by
\begin{align*}
\mathsf{V}\left(\varphi\right)=\sup\left\{\sum_{i=1}^n\left|\varphi\left(x_i\right)-\varphi\left(x_{i-1}\right)\right|\colon n\geq 1, x_i\in\left[0,1\right), x_0<x_1<\ldots<x_n\right\} .
\end{align*}
By $BV$ we denote the Banach space of functions of bounded variation, i.e.\ of functions $\varphi$ fulfilling $\mathsf{V}\left(\varphi\right)<\infty$. It is equipped with the norm
$\left\Vert\varphi\right\Vert_{BV}=\mathsf{V}\left(\varphi\right)+\left\Vert\varphi\right\Vert_{\infty}$.
\end{Def}
For further properties of functions of bounded variation see for example \cite[Chapter 2]{boyarsky_laws_1997}.

We define the transfer operator $\widehat{\tau}$ as the uniquely up to a.s.\ equivalence defined operator such that for all $\varphi\in L^{\infty}$ and all $\zeta\in L^1$ it holds that
\begin{align*}
 \int \left(\varphi\circ \tau\right)\cdot \zeta\mathrm{d}\lambda=\int\varphi\cdot \widehat{\tau}\zeta\mathrm{d}\lambda.
\end{align*}
For every $\varphi\in L^{\infty}$ we have that
\begin{align*}
\int_0^1\varphi\left(\tau x\right)\cdot\zeta\left(x\right)\mathrm{d}\lambda\left(x\right)
&=\int_0^{1/2}\varphi\left(2x\right)\cdot\zeta\left(x\right)\mathrm{d}\lambda\left(x\right)+\int_{1/2}^1\varphi\left(2x-1\right)\cdot\zeta\left(x\right)\mathrm{d}\lambda\left(x\right).
\end{align*}
Setting $s=2x$ in the first summand and $t=2x-1$ in the second summand yields
\begin{align*}
\int_0^1\varphi\left(\tau x\right)\cdot\zeta\left(x\right)\mathrm{d}\lambda\left(x\right)
&=\int_0^{1}\varphi\left(s\right)\cdot\zeta\left(\frac{s}{2}\right)\mathrm{d}\lambda\left(\frac{s}{2}\right)+\int_0^{1}\varphi\left(t\right)\cdot\zeta\left(\frac{t+1}{2}\right)\mathrm{d}\lambda\left(\frac{t+1}{2}\right)\\
&=\int_0^{1}\varphi\left(x\right)\cdot\frac{1}{2}\cdot \left(\zeta\left(\frac{x}{2}\right)+\zeta\left(\frac{x+1}{2}\right)\right)\mathrm{d}\lambda\left(x\right).
\end{align*}
Thus,
\begin{align*}
\left(\widehat{\tau}\zeta\right)\left(x\right)=\frac{1}{2}\cdot\left(\zeta\left(\frac{x}{2}\right)+\zeta\left(\frac{x+1}{2}\right)\right).
\end{align*}
To obtain some decay of correlation we first estimate now
\begin{align*}
\mathsf{V}\left(\widehat{\tau}\zeta\right)&=\sup\sum_{i=1}^n\left|\left(\widehat{\tau}\zeta\right)\left(x_i\right)-\left(\widehat{\tau}\zeta\right)\left(x_{i-1}\right)\right|\\
&=\sup\sum_{i=1}^n\frac{1}{2}\cdot\left|\zeta\left(\frac{x_i}{2}\right)+\zeta\left(\frac{x_i+1}{2}\right)-\zeta\left(\frac{x_{i-1}}{2}\right)-\zeta\left(\frac{x_{i-1}+1}{2}\right)\right|,
\end{align*}
where the supremum is taken over $n\in\N_0$ and $x_i\in[0,1)$ such that $x_0<\ldots<x_n$.
By renaming $x_i/2=y_{i}$ and $\left(x_i+1\right)/2=y_{n+i}$ we obtain
\begin{align*}
\MoveEqLeft\sum_{i=1}^n\left|\zeta\left(\frac{x_i}{2}\right)+\zeta\left(\frac{x_i+1}{2}\right)-\zeta\left(\frac{x_{i-1}}{2}\right)-\zeta\left(\frac{x_{i-1}+1}{2}\right)\right|\\
&=\sum_{i=1}^n\left|\zeta\left(y_i\right)+\zeta\left(y_{n+i}\right)-\zeta\left(y_{i-1}\right)-\zeta\left(y_{n+i-1}\right)\right|\\
&\leq \sum_{i=1}^{2n}\left|\zeta\left(y_i\right)-\zeta\left(y_{i-1}\right)\right|.
\end{align*}
Thus,
\begin{align}
\mathsf{V}\left(\widehat{\tau}\zeta\right)&=\sup\left\{\sum_{i=1}^n\left|\left(\widehat{\tau}\zeta\right)\left(x_i\right)-\left(\widehat{\tau}\zeta\right)\left(x_{i-1}\right)\right|\colon n\in\mathbb{N}, x_i\in\left[0,1\right), x_1<\ldots<x_n\right\}\notag\\
&\leq \frac{1}{2}\sup\left\{\sum_{i=1}^m\left|\zeta\left(z_i\right)-\zeta\left(z_{i-1}\right)\right|\colon m\in\mathbb{N}, z_i\in\left[0,1\right), z_1<\ldots<z_m\right\}
=\frac{1}{2}\mathsf{V}\left(\zeta\right).\label{eq: hat tau psi}
\end{align}

Furthermore, we can decompose the space of $BV$-functions in $BV=P\oplus H$, where $P$ is the projective space $\mathbb{C}\mathbbm{1}$ and $H=\left\{\zeta\in BV\colon \int\zeta\mathrm{d}\lambda=0\right\}$. Each $\zeta\in BV$ can be written as $\zeta=\int\zeta\mathrm{d}\lambda+\zeta_H$, where $\int\zeta_H\mathrm{d}\lambda=0$.
The decomposition of $BV$ is invariant under $\widehat{\tau}$ since $\widehat{\tau}\mathbbm{1}=\mathbbm{1}$ and for $\int\zeta_H\mathrm{d}\lambda=0$ it holds that $\int\widehat{\tau}\zeta_H\mathrm{d}\lambda=\int\left(\mathbbm{1}\circ\tau\right)\zeta_H\mathrm{d}\lambda=\int\zeta_H\mathrm{d}\lambda=0$.

To obtain a decay of correlation result we notice that an iterated application of the definition of the transfer operator yields
\begin{align*}
 \int\left(\varphi\circ \tau^k\right)\cdot\zeta\mathrm{d}\lambda=\int\varphi\cdot\left(\widehat{\tau}^k\zeta\right)\mathrm{d}\lambda.
\end{align*}
The decay of correlation is then estimated by
\begin{align}
\mathrm{Cor}_n\left(\varphi,\zeta\right)
&=\left|\int\left(\varphi\circ\tau^n\right)\cdot\zeta\mathrm{d}\lambda-\int\varphi\mathrm{d}\lambda\cdot\int\zeta\mathrm{d}\lambda\right|
=\left|\int\varphi\cdot\left(\widehat{\tau}^n\zeta\right)\mathrm{d}\lambda-\int\varphi\mathrm{d}\lambda\cdot\int\zeta\mathrm{d}\lambda\right|\notag\\
&=\left|\int\varphi\cdot\widehat{\tau}^n\left(\int\zeta\mathrm{d}\lambda+\zeta_H\right)\mathrm{d}\lambda-\int\varphi\mathrm{d}\lambda\cdot\int\zeta\mathrm{d}\lambda\right|
=\left|\int\varphi\cdot\widehat{\tau}^n\zeta_H\mathrm{d}\lambda\right|\notag\\
&\leq \left\Vert\varphi\right\Vert_1\cdot\left\Vert\zeta_H\right\Vert_{\infty},\label{eq: dec cor}
\end{align}
where we used the fact that $\widehat{\tau}^n\mathbbm{1}=\mathbbm{1}$. Since $\widehat{\tau}^n\zeta_H\in H$, it follows that its range has a diameter less or equal to $\mathsf{V}\left(\zeta\right)$ and contains zero in the convex hull. Thus, $\left\Vert\widehat{\tau}^n\zeta_H\right\Vert_{\infty}\leq\mathsf{V}\left(\widehat{\tau}^n\zeta_H\right)$ and by \eqref{eq: hat tau psi} it follows that
\begin{align*}
\left\Vert\widehat{\tau}^n\zeta_H\right\Vert_{\infty}\leq \mathsf{V}\left(\widehat{\tau}^n\zeta_H\right)\leq 2^{-n}\cdot\mathsf{V}\left(\zeta_H\right)=2^{-n}\cdot\mathsf{V}\left(\zeta\right).
\end{align*}
Combining this with \eqref{eq: dec cor} yields for all $\varphi\in L^1$ and all $\zeta\in BV$ that
\begin{align}
\mathrm{Cor}_n\left(\varphi,\zeta\right)\leq 2^{-n}\cdot\left\Vert\varphi\right\Vert_1\cdot\mathsf{V}\left(\zeta\right). \label{eq: cor}
\end{align}
We further note that each $a_i$ can only take values in the natural numbers.
To prove $\bm{\alpha}$-mixing we first notice that for all $i,k,n\in\mathbb{N}$ and $A\in\sigma\left(\N\right)$ we have that
\begin{align*}
\lambda\left(\left\{a_k=i\right\}\cap\left\{a_{n+k}\in A\right\}\right)
&=\int\mathbbm{1}_{\left\{a_k=i\right\}}\cdot\left(\mathbbm{1}_{A}\circ\tau^{n+k-1}\right)\mathrm{d}\lambda\\
&=\int\left(\mathbbm{1}_{\left\{\chi=i\right\}}\circ\tau^{k-1}\right)\cdot\left(\mathbbm{1}_{A}\circ\tau^{n+k-1}\right)\mathrm{d}\lambda\\
&=\int\mathbbm{1}_{\left\{a_1=i\right\}}\cdot\left(\mathbbm{1}_{A}\circ\tau^{n-1}\right)\mathrm{d}\lambda.
\end{align*}
Obviously, $\left\Vert\mathbbm{1}_{A}\right\Vert_1\leq 1$ and thus $\mathbbm{1}_{A}\in L^1$ and further $\mathsf{V}\left(\mathbbm{1}_{\left\{a_k=i\right\}}\right)\leq 2$ and thus $\mathbbm{1}_{\left\{a_1=i\right\}}\in BV$. Applying \eqref{eq: cor} and the fact that $\left\Vert\mathbbm{1}_A\right\Vert_1=\lambda\left(A\right)$ yields
\begin{align}
\left|\lambda\left(\left\{a_k=i\right\}\cap\left\{a_{n+k}\in A\right\}\right)-\lambda\left(a_k=i\right)\lambda\left(a_{n+k}\in A\right)\right|\leq 2^{-n}.\label{eq: Xnk A Xk i}
\end{align}
Since every $B\in\sigma\left(\N\right)$ is just any subset $I\subset \N$, we have for each $k,n\in\N$
\begin{align}
\MoveEqLeft\left|\lambda\left(\left\{a_k\in B\right\}\cap\left\{a_{n+k}\in A\right\}\right)-\lambda\left(a_k\in B\right)\lambda\left(a_{n+k}\in A\right)\right|\notag\\
&=\left|\sum_{i\in I}\big(\lambda\left(\left\{a_k=i\right\}\cap\left\{a_{n+k}\in A\right\}\right)-\lambda\left(a_k=i\right)\lambda\left(a_{n+k}\in A\right)\big)\right|\notag\\
&\leq \left|\sum_{i\in I\colon i\leq \lfloor 2^{n/2}\rfloor}\big(\lambda\left(\left\{a_k=i\right\}\cap\left\{a_{n+k}\in A\right\}\right)-\lambda\left(a_k=i\right)\lambda\left(a_{n+k}\in A\right)\!\big)\right|\label{eq: Xnk in A0}\\
&\qquad+\lambda\left(a_k\geq \left\lfloor2^{n/2}\right\rfloor\right).\label{eq: Xnk in A}
\end{align}
By \eqref{eq: Xnk A Xk i} we can estimate the sum in \eqref{eq: Xnk in A0}, for each $k,n\in\N$, by
\begin{align}
\sum_{i\in I\colon i\leq \lfloor 2^{n/2}\rfloor}\left(\lambda\left(\left\{a_k=i\right\}\cap\left\{a_{n+k}\in A\right\}\right)-\lambda\left(a_k=i\right)\lambda\left(a_{n+k}\in A\right)\right)
\leq \left\lfloor2^{n/2}\right\rfloor\cdot 2^{-n}
\leq 2^{-n/2}.\label{eq: Xnk in A 1}
\end{align}
Using the distribution function in \eqref{eq: F dist func} the summand in \eqref{eq: Xnk in A} can, for each $k\in\N$, be estimated by
\begin{align}
\lambda\left(a_k\geq \left\lfloor2^{n/2}\right\rfloor\right)=\frac{1}{\left\lfloor2^{n/2}\right\rfloor}\leq 2^{-n/2+1}.\label{eq: Xnk in A 2}
\end{align}
Combining \eqref{eq: Xnk in A 1} and \eqref{eq: Xnk in A 2} yields, for all $k,n\in\N$,
\begin{align*}
\left|\lambda\left(\left\{a_k\in B\right\}\cap\left\{a_{n+k}\in A\right\}\right)-\lambda\left(a_k\in B\right)\lambda\left(a_{n+k}\in A\right)\right|\leq 2^{-n/2+2}.
\end{align*}
\end{proof}

\subsection{Proof of Theorem \ref{thm: weak conv}}
To prove Theorem \ref{thm: weak conv} we need additionally to the mixing properties of the digits some auxiliary definitions and lemmas.
\begin{Def}[Property $\mathbf{B}$]
Let $\left(Y_n\right)$ be a sequence of strictly stationary random variables and $Z_n=\sum_{k=1}^nY_k$.
We say that Property $\mathbf{B}$ is fulfilled for a sequence of constants $\left(B_n\right)_{n\in\mathbb{N}}$ if
\begin{align*}
 \lim_{n\to\infty}\max \left|\mathbb{E}\left(\exp\left(it\cdot\frac{Z_{k+\ell}}{B_n}\right)\right)-\mathbb{E}\left(\exp\left(it\cdot\frac{Z_{k}}{B_n}\right)\right)\cdot\mathbb{E}\left(\exp\left(it\cdot\frac{Z_{\ell}}{B_n}\right)\right)\right|=0
\end{align*}
for all $t\in\mathbb{R}$, where the maximum is taken over $k,\ell\in\N$ fulfilling $1\leq k+\ell\leq n$.
\end{Def}

The following lemma will give a criterion for convergence in probability for a sum of truncated normed random variables.
\begin{lemma}[{{\cite[Theorem 2]{szewczak_relative_2001}}}]\label{lem: szewczak}
Let $\left(Y_n\right)_{n\in\mathbb{N}}$ be a sequence of non-negative, identically distributed random variables 
and for $r>0$ set $U_n^r\coloneqq\sum_{k=1}^nY_k\cdot\mathbbm{1}_{\left\{Y_k\leq r\right\}}$.
Furthermore, assume that the following hold:
\begin{enumerate}[(a)]
 \item\label{en: a} There exists a positive valued sequence $\left(f_n\right)$ tending to infinity such that Property $\mathbf{B}$ is fulfilled for $\mathbb{E}\left(U_n^{f_n}\right)$,
 \item\label{en: b} we have that
 \begin{align*}
\lim_{n\to\infty}\frac{\sum_{k=1}^nY_k\cdot\mathbbm{1}_{\left\{Y_k>f_n\right\}}}{\mathbb{E}\left(U_n^{f_n}\right)}=0
\end{align*}
in probability, and
 \item\label{en: c} $\left(U_n^{f_n}/\mathbb{E}\left(U_n^{f_n}\right)\right)$ is uniformly integrable.
\end{enumerate}

Then
\begin{align*}
\lim_{n\to\infty}\frac{U_n^{f_n}}{\mathbb{E}\left(U_n^{f_n}\right)}=1
\end{align*}
in probability.
\end{lemma}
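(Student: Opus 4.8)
The plan is to prove this degenerate (weak) limit by the method of characteristic functions, using Property $\mathbf{B}$ in place of independence and the uniform integrability~(c) in place of a second moment bound. I would write $B_n\coloneqq\mathbb{E}(U_n^{f_n})$, $V_n\coloneqq U_n^{f_n}/B_n$, $Y_k^{(n)}\coloneqq Y_k\cdot\mathbbm{1}_{\{Y_k\le f_n\}}$ and, for $0\le m\le n$, $S_{m,n}\coloneqq\sum_{k=1}^mY_k^{(n)}$, $Z_m\coloneqq\sum_{k=1}^mY_k$, so $S_{n,n}=U_n^{f_n}$ and, by strict stationarity, $\mathbb{E}(S_{m,n})=(m/n)B_n$. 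Since $V_n\ge 0$ and $\mathbb{E}(V_n)=1$, Markov's inequality makes $(V_n)_n$ tight; as convergence in distribution to the constant $1$ is the same as convergence in probability to $1$, and a tight sequence all of whose weak subsequential limits equal $\delta_1$ converges weakly to $\delta_1$, it suffices to prove that \emph{every weak subsequential limit $L$ of $(V_n)$ equals $1$ almost surely}. I would first note that $B_n\to\infty$ (otherwise $Y_1\equiv 0$) and that~(b), together with the monotonicity in $m$ of the nonnegative remainder $Z_m-S_{m,n}=\sum_{k\le m}Y_k\mathbbm{1}_{\{Y_k>f_n\}}\le Z_n-U_n^{f_n}$, yields uniformly in $m\le n$ and for each fixed $t\in\R$
\begin{align*}
\bigl|\mathbb{E}\bigl(\mathrm{e}^{\mathrm{i}tZ_m/B_n}\bigr)-\mathbb{E}\bigl(\mathrm{e}^{\mathrm{i}tS_{m,n}/B_n}\bigr)\bigr|\le\mathbb{E}\bigl(\min\{2,\,|t|(Z_n-U_n^{f_n})/B_n\}\bigr)\longrightarrow 0,
\end{align*}
so $S_{m,n}/B_n$ and $Z_m/B_n$ are indistinguishable in the limit at the level of characteristic functions; in particular $(V_n)$ and $(Z_n/B_n)$ have the same weak subsequential limits.

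The first main step is to show such an $L$ is infinitely divisible. Fix $r\in\N$, choose a subsequence along which $V_n\Rightarrow L$, and split $\{1,\dots,n\}$ into $r$ consecutive blocks of length $\lfloor n/r\rfloor$ (the $\le r-1$ leftover indices contribute $o_{\mathbb{P}}(1)$ to $Z_n/B_n$ since $B_n\to\infty$). Applying Property $\mathbf{B}$ for the constant sequence $(B_n)$ a \emph{fixed} number $r-1$ of times --- so its errors do not accumulate --- and using that each block sum is distributed as $Z_{\lfloor n/r\rfloor}$, I would obtain
\begin{align*}
\mathbb{E}\bigl(\mathrm{e}^{\mathrm{i}tZ_n/B_n}\bigr)=\Bigl(\mathbb{E}\bigl(\mathrm{e}^{\mathrm{i}tZ_{\lfloor n/r\rfloor}/B_n}\bigr)\Bigr)^{r}+o(1)\qquad(n\to\infty)
\end{align*}
for every fixed $t$. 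Here $Z_{\lfloor n/r\rfloor}/B_n$ may be replaced by $S_{\lfloor n/r\rfloor,n}/B_n$, which is nonnegative with mean $\lfloor n/r\rfloor/n\to 1/r$ and satisfies $0\le S_{\lfloor n/r\rfloor,n}/B_n\le V_n$, hence is dominated by the uniformly integrable family of~(c); so, along a further subsequence, $S_{\lfloor n/r\rfloor,n}/B_n\Rightarrow L_r$ with $L_r\ge 0$ and $\mathbb{E}(L_r)=1/r$. Letting $n\to\infty$ in the display gives $\phi_L=\phi_{L_r}^{\,r}$; as $r$ was arbitrary, $\phi_L$ has a characteristic $r$-th root for every $r$, so $L$ is infinitely divisible, and being nonnegative with finite mean it is the time-$1$ value of a subordinator, $\phi_L(t)=\exp\bigl(\mathrm{i}bt+\int_{(0,\infty)}(\mathrm{e}^{\mathrm{i}tx}-1)\,\nu(\mathrm{d}x)\bigr)$ with $b\ge 0$ and $b+\int_{(0,\infty)}x\,\nu(\mathrm{d}x)=\mathbb{E}(L)=1$.

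The second main step is to show $\nu\equiv 0$, after which $L\equiv b\equiv 1$ and the proof is finished via the reduction of the first paragraph. From $\phi_L=\phi_{L_r}^{\,r}$ and $L_r\Rightarrow\delta_0$ (as $L_r\ge 0$, $\mathbb{E}(L_r)=1/r\to 0$), the accompanying-array description of infinitely divisible laws gives $\nu((\varepsilon,\infty))=\lim_{r\to\infty}r\,\mathbb{P}(L_r>\varepsilon)$ for every $\varepsilon>0$; since $r\,\mathbb{P}(L_r>\varepsilon)\le \tfrac1\varepsilon\,r\,\mathbb{E}(L_r\mathbbm{1}_{\{L_r>\varepsilon\}})$ and $rL_r$ is the weak limit of the normalized partial sum $S_{\lfloor n/r\rfloor,n}/\mathbb{E}(S_{\lfloor n/r\rfloor,n})$, it is enough to know that the family $\{S_{\lfloor n/r\rfloor,n}/\mathbb{E}(S_{\lfloor n/r\rfloor,n})\colon r,n\}$ is uniformly integrable: then $r\,\mathbb{E}(L_r\mathbbm{1}_{\{L_r>\varepsilon\}})\to 0$ as $r\to\infty$, hence $\nu((\varepsilon,\infty))=0$ for all $\varepsilon$, i.e.\ $\nu\equiv 0$. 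I expect this uniform integrability of the \emph{truncated partial} sums to be the real obstacle: hypothesis~(c) only supplies it for the full sums ($m=n$), and one has to propagate it to $m=\lfloor n/r\rfloor$ uniformly in $r$, using~(b) to discard the summands exceeding $f_n$ and Property $\mathbf{B}$ to rule out that a shorter partial sum fluctuates, relative to its mean, worse than the full sum does. By contrast the infinite-divisibility step is robust, as Property $\mathbf{B}$ is invoked only a bounded number of times, so its small errors never compound. Once $\nu\equiv 0$, every subsequential limit of $(V_n)$ equals $\delta_1$, and therefore $U_n^{f_n}/\mathbb{E}(U_n^{f_n})\to 1$ in probability.
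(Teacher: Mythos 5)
The paper gives no proof of this lemma: it is imported verbatim from Szewczak \cite[Theorem~2]{szewczak_relative_2001}, so there is no ``paper's own proof'' to compare against, and your attempt has to be judged on its own.

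Your architecture is the natural one and the first two steps are essentially sound. The reduction via~(b) from $V_n=U_n^{f_n}/B_n$ to $Z_n/B_n$, the $r$-fold blocking, the bounded number of applications of Property~$\mathbf{B}$ so that the errors do not compound, and the diagonal extraction giving an infinitely divisible (nonnegative, mean~$1$) subsequential limit $L$ with convolution roots $L_r$, all work. The use of~(c) to preserve $\mathbb{E}(L)=1$ and $\mathbb{E}(L_r)=1/r$ (domination $0\le S_{\lfloor n/r\rfloor,n}/B_n\le V_n$) is also fine.

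The proof has a genuine unfilled gap exactly where you flag it, and it is not a technicality: it is equivalent to the conclusion. You reduce $\nu\equiv 0$ to the uniform integrability of the family $\{rL_r\}_{r}$, which you would obtain from uniform integrability of the normalized truncated \emph{partial} sums $S_{\lfloor n/r\rfloor,n}/\mathbb{E}(S_{\lfloor n/r\rfloor,n})$ jointly in $r$ and $n$. But hypothesis~(c) only controls $m=n$, and the elementary bound $S_{\lfloor n/r\rfloor,n}\le U_n^{f_n}$ gives $W_{n,r}\le rV_n$, which is useless as $r\to\infty$. Worse, the needed uniform integrability of $\{rL_r\}$ is not a harmless hypothesis one can hope to push through by soft means: if $\nu=\delta_1$ (compound Poisson with rate $1$ and unit jumps), then $rL_r$ is compound Poisson with rate $1/r$ and jump size $r$, so $\mathbb{E}\left(rL_r\,\mathbbm{1}_{\{rL_r\ge r\}}\right)=e^{-1/r}\not\to 0$; thus $\{rL_r\}$ fails to be uniformly integrable precisely when $\nu\neq 0$. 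In other words, establishing that uniform integrability \emph{is} the theorem, and it requires a genuine new ingredient — some maximal or comparison inequality (Ottaviani-type under mixing, or a convexity/contraction argument) that propagates the UI in~(c) from full sums to proportional partial sums, using Property~$\mathbf{B}$ quantitatively. Your sketch names this obstacle but offers no mechanism to overcome it, so as written the proof is incomplete.

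One smaller caution: Property~$\mathbf{B}$ is formulated for strictly stationary sequences, whereas the lemma only says ``identically distributed.'' Your blocking argument uses strict stationarity (to identify block sums with $Z_{\lfloor n/r\rfloor}$ in distribution), so you should either assume it explicitly or note that hypothesis~(a) implicitly requires it.
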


The next two lemmas will enable us to apply Lemma \ref{lem: szewczak}.
\begin{lemma}[{{\cite[Lemma 5.2]{jakubowski_minimal_1993}}}]\label{lem: jakubowski}
If $\left(Y_n\right)_{n\in\mathbb{N}}$ is a strictly stationary, $\bm{\alpha}$-mixing process, then Property $\mathbf{B}$ holds for all sequences $\left(B_n\right)$ tending to infinity.
\end{lemma}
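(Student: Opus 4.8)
The plan is to reduce Property $\mathbf{B}$ to a covariance estimate between two bounded complex random variables and then invoke the standard $\bm{\alpha}$-mixing covariance inequality, with one twist: the index blocks $\{1,\dots,k\}$ and $\{k+1,\dots,k+\ell\}$ are \emph{adjacent}, so before mixing can be used one must insert a spacer block of some fixed length $p$. Fix $t\in\R$ and write $a_n\coloneqq it/B_n$ and $\widehat g_n(m)\coloneqq\mathbb E\big(\exp(a_nZ_m)\big)$, noting $|\widehat g_n(m)|\le 1$. By strict stationarity $Z_{k+\ell}-Z_k=\sum_{j=k+1}^{k+\ell}Y_j$ has the law of $Z_\ell$, so $\widehat g_n(\ell)=\mathbb E\big(\exp(a_n(Z_{k+\ell}-Z_k))\big)$ and hence
\[
\widehat g_n(k+\ell)-\widehat g_n(k)\,\widehat g_n(\ell)=\mathrm{Cov}\big(\exp(a_nZ_k),\exp(a_n(Z_{k+\ell}-Z_k))\big),
\]
a covariance of an $\mathcal A_1^{k}$-measurable and an $\mathcal A_{k+1}^{k+\ell}$-measurable bounded function.

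Next, for a spacer length $p\in\N$ with $p\le k$ I would split $Z_k=Z_{k-p}+V$ with $V=\sum_{j=k-p+1}^{k}Y_j$ and use $\exp(a_nZ_k)=\exp(a_nZ_{k-p})+\exp(a_nZ_{k-p})\big(\exp(a_nV)-1\big)$. Bilinearity of the covariance then produces two pieces: (i) $\mathrm{Cov}\big(\exp(a_nZ_{k-p}),\exp(a_n(Z_{k+\ell}-Z_k))\big)$, whose factors are now measurable with respect to $\mathcal A_1^{k-p}$ and $\mathcal A_{k+1}^{k+\ell}$, index sets separated by $p+1$, so decomposing into real and imaginary parts and applying the elementary bound $|\mathrm{Cov}(X,Y)|\le 4\,\bm\alpha(\mathcal B,\mathcal C)$ for real $|X|,|Y|\le 1$ gives $\le 16\,\bm\alpha(p+1)$; and (ii) a remainder bounded, via $|\mathrm{Cov}(X,Y)|\le 2\|Y\|_\infty\,\mathbb E|X|$ and $|\mathrm e^{ix}-1|\le\min(2,|x|)$, by $2\,\mathbb E\min\!\big(2,|t|\,S_p/B_n\big)$ where $S_p\coloneqq\sum_{j=1}^p|Y_j|$, using stationarity to replace the block $V$ by a sum over $\{1,\dots,p\}$ and $|V|\le\sum_{j=k-p+1}^k|Y_j|$. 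When $\min(k,\ell)<p$ the spacer does not fit; there I would instead bound the covariance directly by $2\,\mathbb E\min(2,|t|S_p/B_n)$ by applying the same $\exp(a_n\cdot)=1+(\exp(a_n\cdot)-1)$ trick to the shorter block (no mixing needed, since a partial sum over a block of length $<p$ is dominated pointwise by $S_p$). In every case one obtains, uniformly over all $k,\ell\ge1$ with $k+\ell\le n$,
\[
\big|\widehat g_n(k+\ell)-\widehat g_n(k)\,\widehat g_n(\ell)\big|\le 16\,\bm\alpha(p+1)+2\,\mathbb E\min\!\big(2,|t|\,S_p/B_n\big).
\]

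To conclude, given $\varepsilon>0$ first choose $p$ so large that $16\,\bm\alpha(p+1)<\varepsilon/2$, which is possible since $\bm\alpha(m)\to0$. With this $p$ now frozen, $S_p$ is an a.s.\ finite random variable, so $|t|S_p/B_n\to0$ a.s.\ as $n\to\infty$ (the only use of $B_n\to\infty$), and $\min(2,\cdot)$ being bounded, dominated convergence yields $\mathbb E\min(2,|t|S_p/B_n)\to0$; pick $N$ with $2\,\mathbb E\min(2,|t|S_p/B_n)<\varepsilon/2$ for $n\ge N$. Then the maximum over $1\le k+\ell\le n$ is $<\varepsilon$ for $n\ge N$, which is Property $\mathbf{B}$ for $(B_n)$. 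I expect the only genuinely delicate point to be arranging the spacer step so that the truncation error is controlled \emph{uniformly} in $k$ and $\ell$; this works precisely because, once $\varepsilon$ is fixed, the spacer length $p$ is fixed too, so the dominating variable $S_p$ depends on neither $n$, $k$, nor $\ell$.
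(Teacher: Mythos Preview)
Your argument is correct. The paper does not give its own proof of this lemma; it simply cites it from Jakubowski's paper, so there is nothing to compare against on the paper's side. Your proof is the standard ``spacer'' argument and is carried out cleanly: rewriting the quantity as the covariance of two unimodular complex variables, peeling off a block of fixed length $p$ to create a gap of width $p+1$, applying the $\bm\alpha$-mixing covariance inequality $|\mathrm{Cov}(X,Y)|\le 16\,\bm\alpha(p+1)$ to the separated piece, and controlling the peeled remainder via $|\mathrm e^{ix}-1|\le\min(2,|x|)$ together with stationarity and dominated convergence. The case $\min(k,\ell)<p$ is handled correctly by expanding the shorter factor. The only place to be slightly more careful in a final write-up is to note explicitly that the spacer step needs $p\le k$ rather than $p\le\min(k,\ell)$; your fallback bound covers all remaining cases anyway, so nothing is lost.
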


\begin{lemma}[de la Vall{\'e}e-Poissin's criterion (see {\cite[II,22]{dellacherie_probabilities_1978}})]\label{lem: Vallee-Poussin}
The family of random variables $\left(Y_n\right)_{n\in \N}$ is uniformly integrable if and only if there exists a non-decreasing, convex, continuous function $h:\mathbb{R}_{> 0}\to \mathbb{R}_{> 0}$ fulfilling $\lim_{x\to\infty}h\left(x\right)/x=\infty$ and
\begin{align}
 \sup_{n\in \N} \mathbb{E}\left(h\left(Y_n\right)\right)<\infty.\label{eq: vallee poussin}
\end{align}
\end{lemma}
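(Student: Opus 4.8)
The plan is to establish the two implications of the equivalence separately. Throughout I work with non-negative $(Y_n)$, so that $h(Y_n)$ is meaningful; the general case follows upon replacing $Y_n$ by $|Y_n|$ and, if necessary, extending $h$ continuously to the origin by $h(0)\coloneqq\lim_{x\downarrow 0}h(x)$.

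For \emph{sufficiency} I would use the defining property of $h$ directly. Suppose such an $h$ exists and set $C\coloneqq\sup_{n\in\N}\mathbb{E}(h(Y_n))<\infty$. Fix $\epsilon>0$. Since $h(x)/x\to\infty$, there is an $M_\epsilon>0$ with $x\leq\frac{\epsilon}{C+1}\,h(x)$ for all $x\geq M_\epsilon$, and hence, for every $n\in\N$,
\[
 \mathbb{E}\big(Y_n\,\mathbbm{1}_{\{Y_n\geq M_\epsilon\}}\big)
 \leq\frac{\epsilon}{C+1}\,\mathbb{E}\big(h(Y_n)\,\mathbbm{1}_{\{Y_n\geq M_\epsilon\}}\big)
 \leq\frac{\epsilon}{C+1}\,\mathbb{E}(h(Y_n))\leq\epsilon .
\]
Taking the supremum over $n$ and then letting $M_\epsilon\to\infty$ yields uniform integrability.

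For \emph{necessity} — which is where the work lies — I would build $h$ explicitly out of the uniform integrability. Since $(Y_n)$ is uniformly integrable we have $\sup_n\mathbb{E}(Y_n)<\infty$, and we may select integers $1\leq c_1<c_2<\cdots$ with $c_k\to\infty$ and $\sup_{n}\mathbb{E}\big(Y_n\,\mathbbm{1}_{\{Y_n\geq c_k\}}\big)\leq 2^{-k}$ for every $k$. Setting $g(x)\coloneqq\sum_{k=1}^{\infty}\mathbbm{1}_{\{x\geq c_k\}}$, a finite-valued, non-decreasing step function with $g(x)\to\infty$ as $x\to\infty$, I would define
\[
 h(x)\coloneqq\int_0^x\big(1+g(t)\big)\,\mathrm{d}t .
\]
Then $h$ is continuous, strictly positive on $\mathbb{R}_{>0}$, non-decreasing, and convex, since its derivative $1+g$ is non-decreasing, while $h(x)/x=1+\tfrac1x\int_0^x g(t)\,\mathrm{d}t\to\infty$ because the running averages of a function diverging to $\infty$ also diverge. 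For the uniform bound, Tonelli's theorem gives $\int_0^x g(t)\,\mathrm{d}t=\sum_{k\geq 1}(x-c_k)^+$, so that
\[
 h(Y_n)=Y_n+\sum_{k=1}^{\infty}(Y_n-c_k)^+\leq Y_n+\sum_{k=1}^{\infty}Y_n\,\mathbbm{1}_{\{Y_n\geq c_k\}},
\]
and consequently
\[
 \mathbb{E}(h(Y_n))\leq\mathbb{E}(Y_n)+\sum_{k=1}^{\infty}\mathbb{E}\big(Y_n\,\mathbbm{1}_{\{Y_n\geq c_k\}}\big)\leq\sup_{m\in\N}\mathbb{E}(Y_m)+\sum_{k=1}^{\infty}2^{-k}<\infty
\]
uniformly in $n$, which is exactly \eqref{eq: vallee poussin}.

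The main obstacle is the necessity direction, and inside it the delicate step is the choice of the thresholds $c_k$: they must be sparse enough that the associated series of tail expectations is summable — exactly what uniform integrability provides — yet still tend to infinity so that $h$ is genuinely superlinear. The extra linear summand in $h$ is only a cosmetic device ensuring strict positivity on $\mathbb{R}_{>0}$ in case $g$ vanishes near the origin. Since the argument is this short, I would present it in full rather than merely citing \cite{dellacherie_probabilities_1978}.
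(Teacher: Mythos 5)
Your proof is correct. Note, however, that the paper does not prove this lemma at all: it is quoted verbatim from Dellacherie--Meyer and used as a black box, so there is no argument in the paper to compare against. What you have written is the classical proof of the de la Vall\'ee-Poussin criterion --- the easy Markov-type estimate for sufficiency, and for necessity the standard construction of $h$ as the integral of the step function $1+\sum_k\mathbbm{1}_{\{\cdot\geq c_k\}}$ with thresholds $c_k$ chosen so that the tail expectations are summable --- and all the steps (Tonelli, the Ces\`aro argument for superlinearity, the bound $(Y_n-c_k)^+\leq Y_n\mathbbm{1}_{\{Y_n\geq c_k\}}$) check out.
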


With this information at hand we are able to prove Theorem \ref{thm: weak conv}.
\begin{proof}[Proof of Theorem \ref{thm: weak conv}]
The proof of this theorem will be done in two steps. Setting $r_n=n\log n$,
the first step is to prove that
$\lim_{n\to\infty}\lambda\left(S_n>T_n^{r_n}\right)=0$.
In the second step we will prove with the help of Lemma \ref{lem: szewczak} that 
$\lim_{n\to\infty}T_n^{r_n}/(n\log n)=1$ in probability.

First we note that by the $\tau$-invariance of $\lambda$
\begin{align*}
 \lambda\left(S_n>T_n^{r_n}\right)
 &=\lambda\left(\bigcup_{k=1}^n\left\{a_k>r_n\right\}\right)
 \leq \sum_{k=1}^n\lambda\left(a_k>r_n\right)
 =n\cdot \lambda\left(a_1>r_n\right).
\end{align*}
Using the distribution of $a_1$ given in \eqref{eq: F dist func} gives 
\begin{align}
\lim_{n\to\infty}\lambda\left(S_n>T_n^{r_n}\right)
\leq \lim_{n\to\infty}\frac{n}{\left\lfloor r_n\right\rfloor+1}
=\lim_{n\to\infty}\frac{1}{\log n}=0.\label{eq: lim lambda Sn>Tn}
\end{align}

In order to prove the second part of the theorem 
we aim to apply Lemma \ref{lem: szewczak} on $Y_k=a_k$ and $\left(f_n\right)=\left(r_n\right)$.
First we notice that $(a_n)$ are strictly stationary and by Lemma \ref{lem: jakubowski} and Lemma \ref{lem: Xn alpha mixing} we have that Condition $\mathbf{B}$ is fulfilled since $\left(r_n\right)$ tends to infinity.
This gives us \ref{en: a} of Lemma \ref{lem: szewczak}.

Furthermore, by \eqref{eq: E Tn tn} we have for $n$ sufficiently large that
\begin{align*}
\lambda\left(\left|\frac{\sum_{k=1}^na_k\cdot\mathbbm{1}_{\left\{a_k>r_n\right\}}}{\mathbb{E}\left(T_n^{r_n}\right)}\right|>\epsilon\right)
 &\leq \lambda\left(\sum_{k=1}^na_k\cdot\mathbbm{1}_{\left\{a_k>r_n\right\}}>\epsilon/2\cdot n\cdot\log r_n\right)\\
 &\leq \lambda\left(\sum_{k=1}^n\mathbbm{1}_{\left\{a_k>r_n\right\}}\geq 1\right)
 =\lambda\left(S_n>T_n^{r_n}\right),
\end{align*}
which by \eqref{eq: lim lambda Sn>Tn} tends to zero and hence \ref{en: b} holds.

Finally, to prove the uniform integrability of $\left(T_n^{r_n}/\mathbb{E}\left(T_n^{r_n}\right)\right)$ we use Lemma \ref{lem: Vallee-Poussin} 
for $Y_n=T_n^{r_n}/\mathbb{E}\left(T_n^{r_n}\right)$ and choose $h$ as $h\left(x\right)=x^2$. 
We have that
\begin{align}
\mathbb{E}\left(\left(T_n^{r_n}\right)^2\right)
&=\sum_{i,j=1}^n\mathbb{E}\left(a_i^{r_n}\cdot a_j^{r_n}\right)
=\sum_{k=1}^n\mathbb{E}\left(\left(a_k^{r_n}\right)^2\right)+2\sum_{1\leq i<j\leq n}\mathbb{E}\left(a_i^{r_n}\cdot a_j^{r_n}\right).\label{eq: E Tntn ^2}
\end{align}
For the first summands in \eqref{eq: E Tntn ^2} we have by stationarity and the distribution function given in \eqref{eq: F dist func}, for all $k\in\N$, that
\begin{align*}
\mathbb{E}\left(\left(a_k^{r_n}\right)^2\right)
&=\sum_{i=1}^{\left\lfloor r_n\right\rfloor}\lambda\left(a_k=i\right)\cdot i^2
=\sum_{i=1}^{\left\lfloor r_n\right\rfloor}\left(\frac{1}{i}-\frac{1}{i+1}\right)\cdot i^2
=\sum_{i=1}^{\left\lfloor r_n\right\rfloor}\frac{i}{i+1}
<r_n
\end{align*}
and the choice of $\left(r_n\right)$ yields
\begin{align}
\sum_{k=1}^n \mathbb{E}\left(\left(a_k^{r_n}\right)^2\right)<n^2\left(\log n\right).\label{eq: E Tntn ^2 1}
\end{align}
To estimate the second sum in \eqref{eq: E Tntn ^2} we 
notice that 
\begin{align}
 \mathbb{E}\left(a_i^{r_n}\cdot a_j^{r_n}\right)
&=\int\left(\chi^{r_n}\circ \tau^{i-1}\right)\cdot \left(\chi^{r_n}\circ \tau^{j-1}\right)\mathrm{d}\lambda
=\int\chi^{r_n}\cdot \left(\chi^{r_n}\circ \tau^{j-i}\right)\mathrm{d}\lambda\notag\\
&\leq \left(\int \chi^{r_n}\mathrm{d}\lambda\right)^2+\mathrm{Cor}_{j-i}\left(\chi^{r_n},\chi^{r_n}\right).\label{eq: E ai aj}
\end{align}
For the first summand we have by \eqref{eq: E X tn}
\begin{align}
 \left(\int \chi^{r_n}\mathrm{d}\lambda\right)^2
 &\leq 2\left(\log r_n\right)^2
 =\left(\log \left(n\cdot \log n\right)\right)^2
 \leq 4\left(\log n\right)^2,\label{eq: E ai aj 1}
\end{align}
for $n$ sufficiently large
and for the second summand of \eqref{eq: E ai aj} we have by \eqref{eq: cor} and \eqref{eq: E X tn}
\begin{align}
 \mathrm{Cor}_{j-i}\left(\chi^{r_n},\chi^{r_n}\right)
 &\leq 2^{-j+i}\cdot \left\Vert\chi^{r_n}\right\Vert_1\cdot \mathsf{V}\left(\chi^{r_n}\right)
 \leq 2^{-j+i+1}\cdot \log r_n\cdot r_n\notag\\
 &\leq 2^{-j+i+1}\cdot\left(\log n+\log\log n\right)\cdot n\cdot\log n
 \leq  2^{-j+i+2}\cdot n\cdot\left(\log n\right)^2,\label{eq: E ai aj 2}
\end{align}
for $n$ sufficiently large.

Combining \eqref{eq: E ai aj} with \eqref{eq: E ai aj 1} and \eqref{eq: E ai aj 2} yields
\begin{align}
\sum_{1\leq i<j\leq n}\mathbb{E}\left(a_i^{r_n}\cdot a_j^{r_n}\right)
&\leq  \sum_{i=1}^n\left( 4\left(\log n\right)^2+\sum_{j>i} 2^{-j+i+2}\cdot  n\cdot\left(\log n\right)^2\right)\notag\\
&=4 n\cdot \left(\log n\right)^2+4 n^2\cdot\left(\log n\right)^2
\leq 8n^2\cdot \left(\log n\right)^2,\label{eq: E Tntn ^2 2}
\end{align}
for $n$ sufficiently large.
Combining \eqref{eq: E Tntn ^2} with
\eqref{eq: E Tntn ^2 1} and \eqref{eq: E Tntn ^2 2} yields
$\mathbb{E}\big(\left(T_n^{r_n}\right)^2\big)\leq 9n^2\left(\log n\right)^2$,
for $n$ sufficiently large.

On the other hand, applying \eqref{eq: E Tn tn} yields
\begin{align}
\mathbb{E}\left(T_n^{r_n}\right)\sim n\cdot\log r_n = n\cdot\left(\log n+\log\log n\right)\sim n\cdot\log n\label{eq: E Tn tn 1}
\end{align}
and thus
\begin{align*}
\lim_{n\to\infty}\mathbb{E}\left(\left(\frac{T_n^{r_n}}{\mathbb{E}\left(T_n^{r_n}\right)}\right)^2\right)\leq9<\infty.
\end{align*}
Hence, \eqref{eq: vallee poussin} follows and by Lemma \ref{lem: Vallee-Poussin} \ref{en: c} holds. 
Hence, Lemma \ref{lem: szewczak} is applicable giving the weak convergence $\lim_{n\to\infty}T_n^{r_n}/\mathbb{E}\left(T_n^{r_n}\right)=1$ in probability.

Lastly, \eqref{eq: E Tn tn 1} gives the denominator in \eqref{eq: Sn log n distr}.
\end{proof}

\section{Proof of Theorem \ref{thm: only if part}}\label{sec: proof only if}
\begin{proof}[Proof of Theorem \ref{thm: only if part}]
Assume that $(b_n)$ is as in \eqref{eq: cond bn 1} with $\psi\in\overline{\Psi}$.
The strategy of the proof is to show
for arbitrary $u\in\N$ that
\begin{align}
\lambda\left(\#\left\{i\leq n\colon a_i\geq n\cdot \log n\right\}\geq b_n+u~\text{ i.o.}\right)=1\label{eq: ai>nlogn io}
\end{align}
which implies
$\lambda\left(S_{n}^{b_{n}}\geq n\cdot \log n\cdot u~\text{ i.o.}\right)=1$.
Noting that $u$ can be chosen arbitrarily large gives the statement of the theorem. 

To show \eqref{eq: ai>nlogn io}
we set $(I_k)$ as in \eqref{eq: Kk}.
We note that for $n\in I_j$, we firstly have
$n\geq 2^j$ and secondly $n\cdot \log n< 2^{j+1}\cdot \log 2^{j+1}<2^{j+2}\cdot j$, for $j$ sufficiently large. 
For $n\in I_j$ and $j$ sufficiently large the definition of $(b_n)$ in \eqref{eq: cond bn 1} and this calculation yields
\begin{align}
 \MoveEqLeft\left\{\#\left\{i\leq n\colon a_i\geq n\cdot \log n\right\}\geq b_n+u\right\}\notag\\
 &= \left\{\#\left\{i\leq n\colon a_i\geq n\cdot \log n\right\}\geq \left\lfloor\frac{\log\psi\left(\left\lfloor \log n\right\rfloor\right)-\log\log n}{\log 2}\right\rfloor+u\right\}\notag\\
 &\supset \left\{\#\left\{i\leq 2^{j}\colon a_i\geq 2^{j+2}\cdot j\right\}\geq \left\lfloor\frac{\log\left(2^{u+1}\psi\left(\left\lfloor \log n\right\rfloor\right)\right)-\log\log n}{\log 2}\right\rfloor\right\}.\label{eq: number ileq n supset}
\end{align}

In order to proceed we need the following lemma which is an analog of Lemma \ref{lem: log gamma log tilde gamma}.
\begin{lemma}\label{lem: log gamma log tilde gamma overline}
Let $\psi\in\overline{\Psi}$. Then there exists $\omega\in\overline{\Psi}$ such that
\begin{align}
\omega\left(\left\lfloor \log_2 n\right\rfloor \right)\geq \psi\left(\left\lfloor \log n\right\rfloor\right).\label{phi psi a b1}
\end{align}
\end{lemma}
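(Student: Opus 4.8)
The plan is to dualise the proof of Lemma~\ref{lem: log gamma log tilde gamma}. I would set
\[
\omega(n):=\max\bigl\{\psi\bigl(\lfloor n\log 2\rfloor+j\bigr)\colon j\in\{0,1\}\bigr\},
\]
the exact counterpart of the function in \eqref{omega n} with the minimum replaced by the maximum, and then establish the two clauses separately: the inequality \eqref{phi psi a b1}, and the membership $\omega\in\overline\Psi$.

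For \eqref{phi psi a b1} one reuses, in the reverse direction, the elementary estimate from Lemma~\ref{lem: log gamma log tilde gamma}. Putting $k:=\lfloor\log_2 n\rfloor=\lfloor\log n/\log 2\rfloor$, the bounds \eqref{eq: omega estim 1} and \eqref{eq: omega estim 2} give $\lfloor\lfloor\log n/\log 2\rfloor\cdot\log 2\rfloor\in\{\lfloor\log n\rfloor-1,\lfloor\log n\rfloor\}$, equivalently $\lfloor\log n\rfloor\in\{\lfloor k\log 2\rfloor,\lfloor k\log 2\rfloor+1\}$. As $\omega(k)$ is the maximum of $\psi$ over precisely these two arguments, it dominates $\psi(\lfloor\log n\rfloor)$, which is \eqref{phi psi a b1}.

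The technical heart of the proof is $\omega\in\overline\Psi$, that is $\sum_{n}1/\omega(n)=\infty$. Writing $1/\omega(n)=\min\{1/\psi(\lfloor n\log 2\rfloor),\,1/\psi(\lfloor n\log 2\rfloor+1)\}$ and using that, since $\log 2<1$, the map $n\mapsto\lfloor n\log 2\rfloor$ is onto $\N_0$ and at most two-to-one, one reduces to
\[
\sum_{n}\frac{1}{\omega(n)}\geq\sum_{m}\min\Bigl\{\frac{1}{\psi(m)},\frac{1}{\psi(m+1)}\Bigr\},
\]
so it remains to see that the right-hand series diverges, i.e.\ that composing a unit shift with a maximum keeps a function in $\overline\Psi$. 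This is the dual of the two closure facts used in Lemma~\ref{lem: log gamma log tilde gamma} — that $n\mapsto\psi(\lfloor\kappa n\rfloor)$ stays in the class (which dualises immediately, as for $\kappa<1$ no argument is skipped and divergence can only be reinforced) and that $\min\{\psi(\cdot),\psi(\cdot+1)\}$ stays in the class — and I expect the dual of the second to be the main obstacle; I would handle it by splitting $\N_0$ according to whether $\psi(m)\ge\psi(m+1)$ and comparing each part with the tail of $\sum_m1/\psi(m)$, using once more the surjectivity of $m\mapsto\lfloor n\log 2\rfloor$. Granting $\omega\in\overline\Psi$, this $\omega$ and \eqref{phi psi a b1} together give the lemma.
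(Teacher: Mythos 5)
Your construction of $\omega$ and the derivation of \eqref{phi psi a b1} from \eqref{eq: omega estim 1}--\eqref{eq: omega estim 2} match the paper exactly, and your reduction of $\omega\in\overline\Psi$ to the surjectivity (with multiplicity at most $\lceil 1/\log 2\rceil=2$) of $n\mapsto\lfloor n\log 2\rfloor$ is exactly the right observation. The remaining obstacle is precisely the one you flagged: showing that $\widetilde\psi(m):=\max\{\psi(m),\psi(m+1)\}$ stays in $\overline\Psi$. The paper simply asserts this (as the ``dual'' of the closure under $\min$ used in Lemma \ref{lem: log gamma log tilde gamma}); you are right to be suspicious, and your proposed splitting does not close it.

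Here is where the split stalls. Put $A=\{m:\psi(m)\geq\psi(m+1)\}$. On $A$ you have $1/\widetilde\psi(m)=1/\psi(m)$, so if $\sum_{m\in A}1/\psi(m)=\infty$ you are done. In the complementary case you learn $\sum_{m\notin A}1/\psi(m)=\infty$, but on $A^c$ you have $1/\widetilde\psi(m)=1/\psi(m+1)$, and there is no way to pass from $\sum_{m\notin A}1/\psi(m)=\infty$ to $\sum_{m\notin A}1/\psi(m+1)=\infty$ without further input. In fact the closure claim itself is \emph{false} without a monotonicity hypothesis on $\psi$: take $\psi(2k)=2^{2^k}$ and $\psi(2k+1)=k$. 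Then $\sum_n 1/\psi(n)=\sum_k 2^{-2^k}+\sum_k 1/k=\infty$, so $\psi\in\overline\Psi$, but $\widetilde\psi(2k)=2^{2^k}$ and $\widetilde\psi(2k+1)=2^{2^{k+1}}$, so $\sum_n 1/\widetilde\psi(n)<\infty$. For this $\psi$ the $\omega$ you (and the paper) construct fails to lie in $\overline\Psi$, so this particular route does not establish the lemma in full generality.

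To be clear, this is a gap in the paper's proof as well, not something you introduced; and it disappears if one assumes $\psi$ non-decreasing, in which case $\widetilde\psi(m)=\psi(m+1)$ and the shift obviously preserves $\overline\Psi$ (the intended applications use monotone $\psi$, e.g.\ $\psi(n)=n\log n$). Without monotonicity one would need a sharper choice of $\omega$: note that $\omega(k)$ only needs to dominate $\psi(\lfloor k\log 2\rfloor+1)$ for those $k$ for which $[k\log 2,(k+1)\log 2)$ actually contains an integer, i.e.\ when $\{k\log 2\}>1-\log 2$; equidistribution of $\{k\log 2\}$ then gives enough ``free'' indices to recover divergence. But that is a different and substantially more delicate argument than either yours or the paper's.
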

\begin{proof}
 The proof can be done analogously to the proof of Lemma \ref{lem: log gamma log tilde gamma}.
 We define $\omega:\mathbb{N}\rightarrow\mathbb{R}_{>0}$ as
\begin{align}
\omega\left(n\right)=\max\left\{ \psi\left(\left\lfloor n\cdot \log 2\right\rfloor +j\right)\colon j\in\left\{ 0,1\right\} \right\}.\label{eq: omega n}
\end{align}
Recall that $\psi\in\overline{\Psi}$. Then for the functions
$\overline{\psi}:\mathbb{N}\to\mathbb{R}_{>0}$ and $\widetilde{\psi}:\mathbb{N}\to\mathbb{R}_{>0}$ given by $\overline{\psi}\left(n\right)=\psi\left(\left\lfloor \kappa\cdot n\right\rfloor \right)$
with $\kappa>0$ and $\widetilde{\psi}\left(n\right)=\max\left\{ \psi\left(n\right),\psi\left(n+1\right)\right\} $
it holds that $\widetilde{\psi},\overline{\psi}\in\overline{\Psi}$.
Hence, $\omega\in\overline{\Psi}$.
Applying $\left\lfloor \log_2 n\right\rfloor $ on $\omega$ in \eqref{eq: omega n} yields
\begin{align*}
\omega\left(\left\lfloor \log_2 n\right\rfloor \right)=\max\left\{ \psi\left(\left\lfloor \left\lfloor \frac{\log n}{\log 2}\right\rfloor\cdot\log 2 \right\rfloor +j\right)\colon j\in\left\{ 0, 1 \right\} \right\} .
\end{align*}
Using \eqref{eq: omega estim 1} and \eqref{eq: omega estim 2} gives 
\begin{align*}
 \max\left\{ \psi\left(\left\lfloor \left\lfloor \frac{\log n}{\log 2}\right\rfloor\cdot\log 2 \right\rfloor +j\right)\colon j\in\left\{ 0,1 \right\} \right\}
 \geq \psi\left(\left\lfloor\log n\right\rfloor\right)
\end{align*}
and \eqref{phi psi a b1} follows.
\end{proof}

Noting that 
\begin{align*}
 \log\log n
 \geq \log\left(\left\lfloor\log_2 n\right\rfloor\cdot \log 2\right)
 = \log\left\lfloor\log_2 n\right\rfloor+\log\log 2
 \geq \log\left\lfloor\log_2 n\right\rfloor-\log 2
\end{align*}
and applying Lemma \ref{lem: log gamma log tilde gamma overline} yields
\begin{align}
 \left\lfloor\frac{\log\left(2^{u+1}\psi\left(\left\lfloor \log n\right\rfloor\right)\right)-\log\log n}{\log 2}\right\rfloor
 &\leq \left\lfloor \log_2\left(2^{u+2}\omega\left(\left\lfloor \log_2 n\right\rfloor\right)\right)-\log_2\left\lfloor\log_2 n\right\rfloor\right\rfloor.\label{eq: number ileq n supset 0}
\end{align}

We note that $\{1,\ldots, 2^j\}\supset I_{j-1}$ and for $n\in I_j$ we have $j=\lfloor\log_2 n\rfloor$ which implies
Inserting then \eqref{eq: number ileq n supset 0} in \eqref{eq: number ileq n supset} yields for $j$ sufficiently large
\begin{align}
\MoveEqLeft\left\{\#\left\{i\leq n\colon a_i\geq n\cdot \log n\right\}\geq b_n+u\right\}\notag\\
 &\supset \left\{\#\left\{i\in I_{j-1}\colon a_i\geq 2^{j+2}\cdot j\right\}\geq \left\lfloor\log_2\left(2^{u+2}\omega\left(j\right)\right)-\log_2 j\right\rfloor\right\}.\label{eq: number ileq n supset1}
\end{align}
For the following let 
\begin{align*}
 \overline{I}_{j}=\begin{cases}
                 \left[2^{j},2^{j+1}-\left\lfloor\log_2\left(2^{u+2}\omega\left(j+1\right)\right)\right\rfloor-1\right]&\text{ if }2^{j-1}\geq\left\lfloor\log_2\left(2^{u+2}\omega\left(j+1\right)\right)\right\rfloor\\
                 \varnothing&\text{ otherwise}
                \end{cases}
\end{align*}
and set $\Gamma=\{j\in \N\colon 2^{j-1}\geq\left\lfloor\log_2\left(2^{u+2}\omega\left(j+1\right)\right)\right\rfloor\}$.
Note here that $\overline{I}_j$ and $\Gamma$ implicitly depend on $\omega$ and $u$.

Assume now that there exists $i\in \overline{I}_{j-1}$ fulfilling $a_i\geq 2^{j+u+5}\cdot\omega\left(j\right)$.
Then, the fact that $\log_2 \left(2^{j+u+5}\cdot\omega\left(j\right)\right)\geq \left\lfloor\log_2\left(2^{u+2}\omega\left(j\right)\right)-\log_2j\right\rfloor-1$
and Lemma \ref{lem: ai+k} imply that we have for all $k\in\left\{i,\ldots,i+\left\lfloor\log_2\left(2^{u+2}\omega\left(j\right)\right)-\log_2j\right\rfloor-1\right\}$
and $x\in \Omega'$ that 
\begin{align}
 a_k
 &\geq 2^{j+u+5-\left(\left\lfloor\log_2\left(2^{u+2}\omega\left(j\right)\right)-\log_2j\right\rfloor-1\right)}\cdot\omega(j)-1
 = 2^{j-\left\lfloor\log_2\left(2^{u+2}\omega\left(j\right)\right)-\log_2j\right\rfloor+u+6}\cdot\omega(j)-1\notag\\
 &\geq 2^{j-\log_2\left(2^{u+2}\omega\left(j\right)\right)-\log_2j+u+5}\cdot\omega(j)-1
 = 2^{j-\log_2j+3}-1\notag\\
 &\geq 2^{j+2-\log_2 j},\label{eq: ak geq}
\end{align}
for $j$ sufficiently large.
We further note that
\begin{align*}
\#\left\{i,\ldots,i+\left\lfloor\log_2\left(2^{u+2}\omega\left(j\right)\right)-\log_2j\right\rfloor-1\right\}
&=\left\lfloor\log_2\left(2^{u+2}\omega\left(j\right)\right)-\log_2j\right\rfloor,
\end{align*}
and 
$\left\{i,\ldots,i+\left\lfloor\log_2\left(2^{u+2}\omega\left(j\right)\right)-\log_2j\right\rfloor-1\right\}\subset I_{j-1}$,
if $i\in \overline{I}_{j-1}$ and $j$ sufficiently large.
Applying this on \eqref{eq: ak geq} 
gives 
\begin{align*}
\MoveEqLeft\left\{\#\left\{i\in I_{j-1}\colon a_i\geq 2^{j+2}\cdot j\right\}\geq \left\lfloor\log_2\left(2^{u+2}\omega\left(j\right)\right)-\log_2 j\right\rfloor\right\}\\
 &\supset \left\{\#\left\{i\in \overline{I}_{j-1}\colon a_i\geq 2^{j+u+5}\cdot\omega\left(j\right)\right\}\geq 1\right\}.
\end{align*}
Combining this with \eqref{eq: number ileq n supset1} gives
\begin{align*}
 \bigcup_{n\in I_j}\left\{\#\left\{i\leq n\colon a_i\geq n\cdot \log n\right\}\geq b_n\right\}
 &\supset \bigcup_{i\in \overline{I}_{j-1}}\left\{ a_i\geq 2^{j+u+5}\cdot\omega\left(j\right)\right\}
\end{align*}
and thus, for $k\in\N$,
\begin{align}
 \bigcup_{n\geq k}\left\{\#\left\{i\leq n\colon a_i\geq n\cdot \log n\right\}\geq b_n\right\}
 &\supset \bigcup_{j\geq \lfloor\log_2 k\rfloor+1}\bigcup_{i\in \overline{I}_{j-1}}\left\{ a_i\geq 2^{j+u+5}\cdot\omega\left(j\right)\right\}.\label{eq: number ileq n supset2}
\end{align}

In the next steps we will make use of the following dynamical Borel-Cantelli lemma.
\begin{lemma}[{\cite[Special case of Theorem 2.1]{kim_dynamical_2007}}]\label{lem: dyn BC-lemma}
Let $\left[0,1\right)$ be partitioned into a finite set of intervals $W_i=\left[c_i,d_i\right)$.
Further, let $\xi\colon \left[0,1\right)\to \left[0,1\right)$ be
such that $\xi$ is derivable on the interior of each $W_i$ and $\xi'\lvert_{\mathring{W}_i}\in BV$. 

Assume that $\xi$ has a uniquely absolutely continuous invariant measure
$\mathrm{d}\mu=h\mathrm{d}\lambda$ and
$h$ is bounded away from $0$. If $A_n$ 
is a sequence of intervals with $\sum_{n=1}^{\infty}\mu\left(A_n\right)=\infty$, then
\begin{align*}
 \lim_{n\to\infty}\frac{\sum_{k=1}^n\mathbbm{1}_{A_n}\circ \xi^{k-1}}{\sum_{k=1}^n\mu\left(A_n\right)}=1~\text{ a.s.}
\end{align*}
\end{lemma}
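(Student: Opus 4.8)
The plan is to obtain this strong (dynamical) Borel--Cantelli lemma by combining exponential decay of correlations for $\xi$ on the space $BV$ with a classical G\'al--Koksma (Sprind\v{z}uk) type strong law for sequences satisfying a quasi-orthogonality estimate --- in essence the argument behind \cite[Theorem 2.1]{kim_dynamical_2007}, written out in the present special case. \textbf{Step 1 (decay of correlations).} The hypotheses on $\xi$ --- piecewise differentiable with $\xi'\vert_{\mathring{W}_i}\in BV$, and a unique absolutely continuous invariant measure $\mathrm{d}\mu=h\,\mathrm{d}\lambda$ with $h$ bounded away from $0$ --- place us in the setting of the Lasota--Yorke/Rychlik theory: the transfer operator of $\xi$ with respect to $\mu$ is quasi-compact on $BV$ with a spectral gap, and $h\in BV$, hence $h$ is also bounded above. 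In particular $\mu$ and $\lambda$ are equivalent with Radon--Nikodym derivatives bounded above and below, so ``$\mu$-a.s.''\ and ``$\lambda$-a.s.''\ coincide, and the spectral gap yields constants $C>0$, $\theta\in(0,1)$ with
\begin{align*}
 \left|\int(\varphi\circ\xi^{N})\cdot\zeta\,\mathrm{d}\mu-\int\varphi\,\mathrm{d}\mu\int\zeta\,\mathrm{d}\mu\right|\le C\,\theta^{N}\,\mathsf{V}(\varphi)\,\|\zeta\|_{L^{1}(\mu)}
\end{align*}
for all $\varphi\in BV$, $\zeta\in L^{1}(\mu)$, $N\ge 0$; this is the analogue of \eqref{eq: cor} for $(\xi,\mu)$, but with the $L^1$-norm deliberately placed on the ``rough'' factor.

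\textbf{Step 2 (second moment estimate).} Set $E_n\coloneqq\sum_{k=1}^n\mu(A_k)$ and $f_k\coloneqq\mathbbm{1}_{A_k}\circ\xi^{k-1}-\mu(A_k)$, so that $\sum_{k=1}^n\mathbbm{1}_{A_k}\circ\xi^{k-1}-E_n=\sum_{k=1}^n f_k$. Each $A_k$ is an interval, hence $\mathsf{V}(\mathbbm{1}_{A_k})\le 2$ and $\|\mathbbm{1}_{A_k}\|_{L^1(\mu)}=\mu(A_k)$. Using $\xi$-invariance of $\mu$, for $j<k$ one has $\int f_jf_k\,\mathrm{d}\mu=\int\mathbbm{1}_{A_j}\cdot\big(\mathbbm{1}_{A_k}\circ\xi^{k-j}\big)\,\mathrm{d}\mu-\mu(A_j)\mu(A_k)$, so Step 1 (with $\varphi=\mathbbm{1}_{A_k}$, $\zeta=\mathbbm{1}_{A_j}$) gives $\big|\int f_jf_k\,\mathrm{d}\mu\big|\le 2C\theta^{k-j}\mu(A_j)$, while $\int f_k^2\,\mathrm{d}\mu=\mu(A_k)-\mu(A_k)^2\le\mu(A_k)$. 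Summing the diagonal and the off-diagonal terms, for all $m<n$,
\begin{align*}
 \int\Big(\sum_{k=m+1}^{n}f_k\Big)^{2}\mathrm{d}\mu\le(E_n-E_m)+2C\sum_{m<j\le n}\mu(A_j)\sum_{\ell\ge 1}\theta^{\ell}\le\Big(1+\tfrac{2C}{1-\theta}\Big)(E_n-E_m).
\end{align*}

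\textbf{Step 3 (G\'al--Koksma and conclusion).} The last display is precisely the hypothesis of the G\'al--Koksma/Sprind\v{z}uk strong law (see e.g.\ Sprind\v{z}uk's monograph on metric Diophantine approximation, or Harman's book): for a sequence of non-negative measurable $f_k$ with $\int(\sum_{m<k\le n}f_k)^2\,\mathrm{d}\mu\le \mathrm{const}\cdot(E_n-E_m)$ for all $m<n$, one has, for every $\epsilon>0$,
\begin{align*}
 \sum_{k=1}^{n}\mathbbm{1}_{A_k}\circ\xi^{k-1}=E_n+O\!\big(E_n^{1/2}(\log E_n)^{3/2+\epsilon}\big)\qquad\mu\text{-a.s.}
\end{align*}
Since $\sum_{n}\mu(A_n)=\infty$ forces $E_n\to\infty$, dividing through by $E_n$ gives $\sum_{k\le n}\mathbbm{1}_{A_k}\circ\xi^{k-1}/E_n\to 1$ $\mu$-a.s., hence $\lambda$-a.s.\ by equivalence of the two measures, which is the assertion.

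The only genuinely delicate point is the \emph{form} of the correlation bound in Step 1: the $L^1$-norm must sit on the indicator occurring later in time, so that the off-diagonal double sum in Step 2 is controlled by $E_n$ rather than by $n$; the crude uniform bound $|\mathrm{Cor}_{k-j}|\le C\theta^{k-j}$ would only give a $O(n)$ error term, which is useless whenever $E_n=o(n)$ (the typical case in the application). The remaining inputs --- the Lasota--Yorke spectral gap for this class of interval maps, and the purely measure-theoretic G\'al--Koksma lemma --- are standard and may be quoted off the shelf.
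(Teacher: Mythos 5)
The paper itself offers no proof of this lemma: it is quoted directly from Kim's Theorem 2.1, and the proof of that theorem is precisely the scheme you outline (exponential decay of correlations tested against indicators of intervals, a second-moment estimate, and the G\'al--Koksma/Schmidt strong law), so your route is the intended one and Steps 2 and 3 are correctly organised. There is, however, a genuine error in Step 1. The correlation inequality you display, with $\mathsf{V}$ on the factor composed with $\xi^{N}$ and the $L^{1}$-norm on the other factor, claimed ``for all $\zeta\in L^{1}(\mu)$'', is false: the spectral gap on $BV$ controls $\widehat{\xi}^{N}\zeta$ only when $\zeta\in BV$, so the $BV$-seminorm must sit on the \emph{uncomposed} factor and the $L^{1}$-norm on the composed one --- exactly the shape of the paper's own estimate \eqref{eq: cor}. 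Concretely, for the doubling map take $\varphi=\mathbbm{1}_{[0,1/2)}$ and $\zeta=2^{M}\mathbbm{1}_{[0,2^{-M})}$: the correlation at time $N$ equals $1/2$ for every $N<M$, while $\mathsf{V}(\varphi)\cdot\Vert\zeta\Vert_{L^{1}}$ stays bounded, so no bound of the form $C\theta^{N}\mathsf{V}(\varphi)\Vert\zeta\Vert_{L^{1}}$ can hold.

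Fortunately the error is local and does not damage the argument. The correct inequality applied to $\varphi=\mathbbm{1}_{A_k}$ (composed) and $\zeta=\mathbbm{1}_{A_j}$ gives $\big|\int f_jf_k\,\mathrm{d}\mu\big|\leq 2C\theta^{k-j}\mu(A_k)$, i.e.\ the measure of the \emph{later} set rather than the earlier one; summing first over $j$, the off-diagonal contribution is still at most $\tfrac{2C\theta}{1-\theta}(E_n-E_m)$, so the variance bound of Step 2, and hence Step 3, survive verbatim. Your closing remark that the $L^{1}$-norm ``must sit on the indicator occurring later in time'' is thus correct in spirit but contradicts your own Step 2, where you extract $\mu(A_j)$: either placement of the $L^{1}$-norm makes the double sum telescope to $O(E_n-E_m)$, but only the placement on the composed (later) indicator is actually available from the transfer operator. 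A last caveat: a spectral gap on $BV$ also requires uniform expansion and weak mixing of $\mu$, neither of which appears in the lemma as transcribed here; these are hypotheses of Kim's theorem that the quotation suppresses, so assuming them, as you implicitly do, is the right reading.
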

It follows easily that Lemma \ref{lem: dyn BC-lemma} is applicable. 
The interval $[0,1)$ is partitioned into the intervals $[0,1/2)$ and $[1/2,1)$ and $\tau'\lvert_{(0,1/2)}=\tau'\lvert_{(1/2,1)}=2\cdot\mathbbm{1}\in BV$.
Further, the absolutely continuous measure for this transformation is the Lebesgue measure itself. 

We have by \eqref{eq: F dist func} that
\begin{align*}
\lambda\left(a_i\geq 2^{j+u+5}\cdot\omega\left(j\right)\right)=\frac{1}{\left\lfloor 2^{j+u+5}\cdot\omega\left(j\right)\right\rfloor}.
\end{align*}
Thus,
\begin{align*}
\sum_{j=1}^{\infty}\sum_{i\in\overline{I}_{j-1}}\lambda\left(a_i\geq 2^{j+u+5}\cdot\omega\left(j\right)\right)
&=\sum_{j=1}^{\infty}\sum_{i\in\overline{I}_{j-1}}\frac{1}{\left\lfloor 2^{j+u+5}\cdot\omega\left(j\right)\right\rfloor}
= \sum_{j=1}^{\infty}\frac{\#\overline{I}_{j-1}}{\left\lfloor 2^{j+u+5} \cdot\omega\left(j\right)\right\rfloor}\\
&= \sum_{j\in\Gamma}\frac{\#\overline{I}_{j-1}}{\left\lfloor 2^{j+u+5} \cdot\omega\left(j\right)\right\rfloor}
=\sum_{j\in\Gamma}\frac{2^{j-1}-\left\lfloor\log_22\omega\left(j\right)\right\rfloor}{\left\lfloor 2^{j+u+5}\cdot\omega\left(j\right)\right\rfloor}.
\end{align*}
By the definition of $\Gamma$ we have for $j\in \Gamma$ that $2^{j-1}-\left\lfloor\log_2\left(2^{u+2}\omega\left(j\right)\right)\right\rfloor\geq 2^{j-2}$
yielding
\begin{align}
\sum_{j=1}^{\infty}\sum_{i\in\overline{I}_{j-1}}\lambda\left(a_i\geq 2^{j+u+5}\cdot\omega\left(j\right)\right)
&\geq \sum_{j\in\Gamma}\frac{1}{2^{u+7}\cdot\omega\left(j\right)}.\label{eq: sum 1/kappa psi}
\end{align}
Furthermore, we note that $j\notin\Gamma$ implies $\omega(j)>2^{2^{j-2}}\cdot 2^{-u-2}>2^{2^{j-3}}$,
for $j$ sufficiently large, say larger than $J\in \N$.
Thus,
\begin{align*}
 \sum_{j\notin\Gamma\cup\N_{\leq J}}\frac{1}{\omega(j)}
 < \sum_{j\notin\Gamma\cup\N_{\leq J}}\frac{1}{2^{2^{j-2}}}
 <\sum_{j=1}^{\infty}\frac{1}{2^{2^{j-2}}}
 <\infty
\end{align*}
implying that $\sum_{j\in\Gamma}1/\omega(j)=\infty$ since $\omega\in\overline{\Psi}$.
Combining this consideration with \eqref{eq: sum 1/kappa psi} yields
\begin{align*}
 \sum_{j=1}^{\infty}\sum_{i\in\overline{I}_{j-1}}\lambda\left(a_i\geq 2^{j+u+5}\cdot\omega\left(j\right)\right)
 =\infty.
\end{align*}
Since 
\begin{align*}
\left\{a_i\geq 2^{j+u+5}\cdot\omega\left(j\right)\right\}=\left\{\mathbbm{1}_{\left[0,\lceil 2^{j+u+5}\cdot\omega\left(j\right)\rceil^{-1}\right)}\circ \tau^{i-1}= 1\right\}, 
\end{align*}
where $\left\lceil x\right\rceil=\min\left\{n\in\mathbb{Z}\colon n\geq x\right\}$,
the conditions of Lemma \ref{lem: dyn BC-lemma} are fulfilled and 
we can apply the second Borel-Cantelli lemma proving 
\begin{align*}
\lambda\left(\bigcap_{n\in\mathbb{N}}\bigcup_{j\geq n}\bigcup_{i\in\overline{I}_j}\left\{a_i\geq 2^{j+u+5}\cdot \omega\left(j\right)\right\}\right)=1.
\end{align*}
Combining this with \eqref{eq: number ileq n supset2} gives \eqref{eq: ai>nlogn io} and thus the statement of the theorem. 
\end{proof}

\section*{Acknowledgements}
The author would like to thank Alan Haynes for helpful discussions and comments.

\end{document}